\newcommand{\R}{\ensuremath{\mathbb{R}}}
\newcommand{\Z}{\ensuremath{\mathbb{Z}}}
\newcommand{\N}{\ensuremath{\mathbb{N}}}
\newcommand{\RP}{\ensuremath{\mathbb{R}\mathrm{P}}}
\newcommand{\SIF}{\ensuremath{\{X_{i},Y_{i},f_{i}\}}}
\newcommand{\SIFI}{\ensuremath{\{X_{i},X_{i},f_{i}\}}}
\newcommand{\funtor}{\ensuremath{\mathbf{F}}}
\newcommand{\contrafuntor}{\ensuremath{\mathbf{G}}}
\newcommand{\X}{\ensuremath{\mathfrak{X}}}
\newcommand{\Ze}{\ensuremath{\mathfrak{Z}}}
\newtheorem{Defn}{Definition}[section]
\newtheorem{prop}[Defn]{Proposition}
\newtheorem{lem}[Defn]{Lemma}
\newtheorem{Rem}[Defn]{Remark}
\newtheorem{thm}[Defn]{Theorem}
\newtheorem{cor}[Defn]{Corollary}
\newtheorem{Exmp}[Defn]{Example}
\begin{document}

\thispagestyle{empty}

\begin{center}

{\LARGE Closed injective systems and its fundamental limit spaces}

\vspace{7mm}

{\large Marcio Colombo Fenille\footnote{E-mail:fenille@icmc.usp.br}}

\vspace{7mm}

Instituto de Ci\^ encias Matem\'aticas e de Computa\c c\~ao -
Universidade de S\~ao Paulo - Av. Trabalhador S\~ao-Carlense, 400,
Centro, Caixa Postal 668, CEP 13560-970, S\~ao Carlos, SP, Brazil.

\vspace{3mm}



\end{center}


\noindent {\bf Abstract:} In this article we introduce the concept
of limit space and fundamental limit space for the so-called closed
injected systems of topological spaces. We present the main results
on existence and uniqueness of limit spaces and several concrete
examples. In the main section of the text, we show that the closed
injective system can be considered as objects of a category whose
morphisms are the so-called cis-morphisms. Moreover, the transition
to fundamental limit space can be considered a functor from this
category into category of topological spaces. Later, we show results
about properties on functors and counter-functors for inductive
closed injective system and fundamental limit spaces. We finish with
the presentation of some results of characterization of fundamental
limite space for some special systems and the study of so-called
perfect properties.

\vspace{2mm}

\noindent {\bf Key words:} Closed injective system, fundamental
limit space, category, functoriality.

\noindent {\bf Mathematics Subject Classification:} 18A05, 18A30,
18B30, 54A20, 54B17.

\begin{center}




\end{center}



\vspace{-10mm}

\section{Introduction}


Our purpose is to introduce and study what we call category of
closed injective systems and cis-morphisms, beyond the limit spaces
of such systems.


We start by defining the so-called closed injective systems (CIS to
shorten), and the concepts of limit space for such systems. We have
particular interest in a special type of limit space, those we call
fundamental limit space. Section \ref{Section.Fundamental.L.E.} is
devoted to introduce this concept and demonstrate theorems of
existence and uniqueness of fundamental limit spaces. The following
section, in turn, is devoted to presenting some very illustrative
examples.


Section \ref{Section.Category} is one of the most important and
interesting for us. There we show that a closed injective system can
be considered as object of a category, whose morphisms are the
so-called cis-morphisms, which we define in this occasion.
Furthermore, we prove that this category is complete with respect to
direct limits, that is, all inductive system of CIS's and
cis-morphisms has a direct limit.


In Section \ref{Section.Passagem.ao.limite}, we prove that the
transition to the fundamental limit can be considered as a functor
from category of CIS's and cis-morphisms into category of
topological spaces and continuous maps.


In Section \ref{Section.Compatibility.Limits}, we show that the
transition to the direct limit in the category of CIS's and
cis-morphisms is compatible (in a way) to transition to the
fundamental limit space.


In section \ref{Section.Inductive.System}, we study a class of
special CIS's called inductive closed injective systems. In the two
following sections, we study the action of functors and
counter-functors, respectively, in such systems, and present some
simple applications of the results demonstrated.

We finish with the presentation of some results of characterization
of fundamental limite space for some special systems, the so-called
finitely-semicomponible and stationary systems, and the study of
so-called perfect properties over topological spaces of a system and
over its fundamental limit spaces.

\section{Closed injective system and limit spaces}\label{Section.CIS} 

Let $\{X_{i}\}_{i=0}^{\infty}$ be a countable collection of nonempty
topological spaces. For each $i\in\N$, let $Y_{i}$ be a closed
subspace of $X_{i}$. Assume, for each $i\in\N$, there is a closed
injective continuous map
$$f_{i}:Y_{i}\rightarrow X_{i+1}.$$ This structure is called {\bf
closed injective system}, or CIS, to shorten. We write
$\{X_{i},Y_{i},f_{i}\}$ to represent this system. Moreover, by
injection we mean a injective continuous map.

We say that two injection $f_{i}$ and $f_{i+1}$ are {\bf
semicomponible} if $f_{i}(Y_{i})\cap Y_{i+1}\neq\emptyset$. In this
case, we can define a new injection
$$f_{i,i+1}:f_{i}^{-1}(Y_{i+1})\rightarrow X_{i+2}$$ by $f_{i,i+1}(y)=(f_{i+1}\circ f_{i})(y)$, for all $y\in
f_{i}^{-1}(Y_{i+1})$.

For convenience, we put $f_{i,i}=f_{i}$. Moreover,  we say that
$f_{i}$ is always semicomponible with itself. Also, we write
$f_{i,i-1}$ to be the natural inclusion of $Y_{i}$ into $X_{i}$, for
all $i\in\N$.

Given $i,j\in\N$, $j>i+1$, we say that $f_{i}$ and $f_{j}$ are {\bf
semicomponible} if $f_{i,k}$ and $f_{k+1}$ are semicomponible for
all $i+1\leq k\leq j-1$, where
$$f_{i,k}:f_{i,k-1}^{-1}(Y_{k})\rightarrow X_{k+1}$$ is defined
inductively. To facilitate the notations, se $f_{i}$ and $f_{j}$ are
semicomponible, we write $$Y_{i,j}=f_{i,j-1}^{-1}(Y_{j}),$$ that is,
$Y_{i,j}$ is the domain of the injection $f_{i,j}$. According to the
agreement $f_{i,i}=f_{i}$, we have $Y_{i,i}=Y_{i}$.

\begin{lem}\label{Lema.Inicial.1}
If $f_{i}$ and $f_{j}$ are semicomponible, $i<j$, then $f_{k}$ and
$f_{l}$ are semicomponible, for any integers $k,l$ with $i\leq k\leq
l\leq j$.
\end{lem}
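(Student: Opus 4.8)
The plan is to first recast semicomponibility in terms of the nested domains. I would observe that for $a\le b$ the subspaces $Y_{a,a}\supseteq Y_{a,a+1}\supseteq\cdots\supseteq Y_{a,b}$ form a decreasing chain, since each $Y_{a,m+1}=f_{a,m}^{-1}(Y_{m+1})$ is contained in the domain $Y_{a,m}$ of $f_{a,m}$; moreover, at each stage the condition ``$f_{a,m}$ and $f_{m+1}$ are semicomponible'' is exactly $Y_{a,m+1}=f_{a,m}^{-1}(Y_{m+1})\neq\emptyset$. Consequently $f_a$ and $f_b$ are semicomponible if and only if $Y_{a,b}\neq\emptyset$. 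I would also record the elementary telescoping identity $f_{a,m}=f_{b,m}\circ f_{a,b-1}$ on $Y_{a,m}$ for $a\le b\le m$ (with the convention that $f_{b,b-1}$ is the inclusion $Y_b\hookrightarrow X_b$), which is immediate from the inductive definition of the composites $f_{a,m}=f_m\circ\cdots\circ f_a$.

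With this reformulation the statement splits into two reductions. First, lowering the right endpoint is immediate: for $i\le l\le j$ the defining conditions for ``$f_i$ and $f_l$ semicomponible'' form a sub-collection of those for ``$f_i$ and $f_j$ semicomponible'' (equivalently $Y_{i,l}\supseteq Y_{i,j}\neq\emptyset$), so $f_i$ and $f_l$ are semicomponible. Combining this with the left-endpoint reduction below, for arbitrary $i\le k\le l\le j$ I would first raise the left endpoint to obtain that $f_k$ and $f_j$ are semicomponible, and then apply the right-endpoint reduction to the pair $(k,j)$ to conclude that $f_k$ and $f_l$ are semicomponible.

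The substance is the left-endpoint reduction: $Y_{i,j}\neq\emptyset$ implies $f_k$ and $f_j$ are semicomponible for $i\le k\le j$. Here I would fix a witness $y\in Y_{i,j}$, set $z=f_{i,k-1}(y)\in X_k$ (reading $z=y$ when $k=i$), and prove by induction on $m$ running from $k$ up to $j$ that $f_k$ and $f_m$ are semicomponible and that $z\in Y_{k,m}$ with $f_{k,m}(z)=f_{i,m}(y)$. The base $m=k$ holds because $y\in Y_{i,k}$ gives $z=f_{i,k-1}(y)\in Y_k=Y_{k,k}$. For the inductive step I use the telescoping identity to write $f_{k,m-1}(z)=f_{i,m-1}(y)$, and then $y\in Y_{i,j}\subseteq Y_{i,m}=f_{i,m-1}^{-1}(Y_m)$ forces $f_{k,m-1}(z)\in Y_m$, i.e.\ $z\in f_{k,m-1}^{-1}(Y_m)=Y_{k,m}$; in particular $Y_{k,m}\neq\emptyset$, so $f_k$ and $f_m$ are semicomponible and $f_{k,m}$ is defined, which closes the induction.

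The one delicate point to watch — and the reason the argument is phrased as an induction rather than a one-line image computation — is that the very symbols $Y_{k,m}$ and $f_{k,m}$ are meaningful only after the semicomponibility of $f_k$ and $f_m$ has been verified, since the composites $f_{k,\cdot}$ are built up recursively. The induction on $m$ is arranged precisely so that the nonemptiness of $Y_{k,m}$ (hence the definability of $f_{k,m}$) and the containment of the witness point are established simultaneously at each stage; the telescoping identity is the only computation involved and is entirely routine.
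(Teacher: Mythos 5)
Your proof is correct. Note that the paper itself omits the proofs of Lemmas 2.1--2.3 (``The proofs of above results are omitted''), so there is no argument of the author's to compare against; yours supplies exactly the kind of routine verification the omission presupposes. Your reformulation (semicomponibility of $f_a$ and $f_b$ amounts to $Y_{a,b}\neq\emptyset$, via the nested chain $Y_{a,a}\supseteq Y_{a,a+1}\supseteq\cdots$), the easy right-endpoint reduction, and the witness-carrying induction for the left-endpoint reduction are all sound, and you correctly identify and handle the one genuine subtlety: that $Y_{k,m}$ and $f_{k,m}$ are only defined once semicomponibility at the earlier stages is established, which is why the nonemptiness and the identity $f_{k,m}(z)=f_{i,m}(y)$ must be carried along simultaneously in the induction rather than asserted outright.
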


\begin{lem}
If $f_{i}$ and $f_{j}$ are not semicomponible, then $f_{i}$ and
$f_{k}$ are not semicomponible, for any integers $k>j$.
\end{lem}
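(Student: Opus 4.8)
The plan is to prove the contrapositive and reduce everything to Lemma \ref{Lema.Inicial.1}. First I would record that the hypothesis already forces $i<j$: since every injection is semicomponible with itself, the pair $f_i,f_j$ can fail to be semicomponible only when $i\neq j$, and in the ordering used throughout this section this means $i<j$. Combined with $k>j$, this places the three indices in the chain $i<j<k$, which is exactly the configuration needed to invoke the previous lemma.

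Next I would assume, toward the contrapositive, that there exists some integer $k>j$ for which $f_i$ and $f_k$ \emph{are} semicomponible. Because $i<k$, Lemma \ref{Lema.Inicial.1} applies to the pair $f_i,f_k$ and yields that $f_p$ and $f_q$ are semicomponible for every pair of integers with $i\le p\le q\le k$. The only bookkeeping step is to select the right sub-pair: taking $p=i$ and $q=j$ is legitimate since $i\le i\le j\le k$ (using $i<j<k$), and this gives precisely that $f_i$ and $f_j$ are semicomponible. This contradicts the hypothesis that $f_i$ and $f_j$ are not semicomponible, so no integer $k>j$ can make $f_i$ and $f_k$ semicomponible, which is the assertion of the lemma.

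I do not expect a genuine obstacle here. The underlying reason is the monotone structure of the definition: each domain $Y_{i,m}=f_{i,m-1}^{-1}(Y_m)$ sits inside the previous one, and semicomponibility of $f_i,f_j$ amounts to a nonempty intersection condition holding at every stage $m$ with $i\le m\le j-1$, whereas semicomponibility of $f_i,f_k$ demands the same conditions at every stage up to $k-1$. Since $j-1<k-1$, the stages controlling the $j$-condition form a subset of those controlling the $k$-condition, so a failure at any stage $\le j-1$ is already a failure at some stage $\le k-1$. This is exactly what Lemma \ref{Lema.Inicial.1} packages, and the contrapositive formulation lets me avoid any case analysis on where the chain of semicomponibility first breaks. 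The single point worth stating explicitly is the index inequality $i<j<k$, which guarantees that the desired sub-pair $(i,j)$ lies in the range $i\le p\le q\le k$ covered by the earlier lemma.
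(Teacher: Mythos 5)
Your proof is correct. Note that the paper itself gives no argument for this lemma (it is one of the three results whose proofs are explicitly omitted), so there is nothing to compare against; your contrapositive reduction to Lemma \ref{Lema.Inicial.1} --- observing that $i<j<k$ and that the pair $(i,j)$ lies in the range $i\le p\le q\le k$ covered by that lemma --- is the natural way to fill this gap, and your preliminary remark that the hypothesis forces $i<j$ (since each $f_i$ is semicomponible with itself and the paper always writes semicomponible pairs in increasing index order) is exactly the right point to make explicit.
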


\begin{lem}\label{Lema.Inicial.2}
Assume that $f_{i}$ and $f_{j}$ are semicomponible, with $i<j$. Then
we have
$$Y_{i,j}=(f_{j-1}\circ\cdots\circ f_{i})^{-1}(Y_{j}) \ \ \ {\rm and}
\ \ \ f_{i,j}(Y_{i,j})=(f_{j}\circ f_{i,j-1})(Y_{i,j-1}).$$
\end{lem}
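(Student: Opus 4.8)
The plan is to reduce both identities to a single structural fact: that the inductively defined injection $f_{i,k}$ is nothing but the iterated composition $f_{k}\circ f_{k-1}\circ\cdots\circ f_{i}$ restricted to its largest domain of definition. First I would establish this fact by induction on $k$, for $i\leq k\leq j-1$. The base case $k=i$ is the convention $f_{i,i}=f_{i}$. For the inductive step, the definition gives $f_{i,k}=f_{k}\circ f_{i,k-1}$ on the domain $Y_{i,k}=f_{i,k-1}^{-1}(Y_{k})$; combining this with the inductive hypothesis that $f_{i,k-1}$ agrees with $f_{k-1}\circ\cdots\circ f_{i}$ on $Y_{i,k-1}$ shows that $f_{i,k}$ agrees with $f_{k}\circ\cdots\circ f_{i}$ and that its domain $Y_{i,k}$ consists precisely of those $y\in Y_{i}$ for which every partial composite $(f_{l}\circ\cdots\circ f_{i})(y)$ lies in $Y_{l+1}$ for $i\leq l\leq k-1$. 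Lemma~\ref{Lema.Inicial.1} guarantees that all the intermediate semicomponibility hypotheses needed for these composites to exist are in force, so the induction is legitimate.

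Granting this, the first identity is immediate. By definition $Y_{i,j}=f_{i,j-1}^{-1}(Y_{j})$, and by the structural fact $f_{i,j-1}$ coincides with $f_{j-1}\circ\cdots\circ f_{i}$ on its maximal domain $Y_{i,j-1}$. Since any point mapped into $Y_{j}$ by the full composite must already lie in that maximal domain, the two preimages agree, giving $Y_{i,j}=(f_{j-1}\circ\cdots\circ f_{i})^{-1}(Y_{j})$.

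For the second identity I would argue set-theoretically. From the inductive definition, $f_{i,j}=f_{j}\circ f_{i,j-1}$ on $Y_{i,j}$, so $f_{i,j}(Y_{i,j})=(f_{j}\circ f_{i,j-1})(Y_{i,j})$. It then remains to show $(f_{j}\circ f_{i,j-1})(Y_{i,j})=(f_{j}\circ f_{i,j-1})(Y_{i,j-1})$, where the right-hand composite is understood on the set of points of $Y_{i,j-1}$ whose $f_{i,j-1}$-image lies in the domain $Y_{j}$ of $f_{j}$. Here I would use the elementary identity $g(A)\cap B=g(A\cap g^{-1}(B))$, valid for any map $g$ and any sets $A,B$, applied with $g=f_{i,j-1}$, $A=Y_{i,j-1}$ and $B=Y_{j}$; together with the inclusion $Y_{i,j}=f_{i,j-1}^{-1}(Y_{j})\subseteq Y_{i,j-1}$ this yields $f_{i,j-1}(Y_{i,j-1})\cap Y_{j}=f_{i,j-1}(Y_{i,j})$. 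Applying $f_{j}$ to both sides and reading the definitions backward gives the claimed equality.

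The only delicate point is the first paragraph: one must be scrupulous about the domains, since the $f_{i}$ are only partially defined and the composites $f_{k}\circ\cdots\circ f_{i}$ a priori make sense only on successively smaller sets. The whole content of the lemma is the verification that the inductively defined domain $Y_{i,k}$ coincides with this natural maximal domain, and I expect this domain-tracking---rather than any topological input---to be the main obstacle; the continuity and closedness of the $f_{i}$ play no role here, only their set-theoretic behaviour.
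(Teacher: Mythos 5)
The paper offers no proof of this lemma at all (it states that ``The proofs of above results are omitted''), so there is no authorial argument to compare against; your proof is correct and is precisely the routine verification left to the reader. Your induction showing that $f_{i,k}$ is the iterated composition $f_{k}\circ\cdots\circ f_{i}$ restricted to its maximal domain, combined with the identity $g(A)\cap B=g\bigl(A\cap g^{-1}(B)\bigr)$ applied with $g=f_{i,j-1}$, $A=Y_{i,j-1}$, $B=Y_{j}$, correctly yields both claimed equalities under the partial-map conventions the paper itself uses (for instance in the proof of Proposition \ref{Prop.Implica.Fechado}), and you rightly identify that the only content is set-theoretic domain-tracking, with no topological input.
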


The proofs of above results are omitted.


Henceforth, since products of maps do not appear in this paper, we
can sometimes omit the symbol $\circ$ in the composition of maps.

\begin{Defn}
Let $\SIF$ be a CIS. A {\bf limit space} for this system is a
topological space $X$ and a collection of continuous maps
$\phi_{i}:X_{i}\rightarrow X$ satisfying the following conditions:
\begin{enumerate}
\item[{L.1.}] $X=\bigcup_{i=0}^{\infty}\phi_{i}(X_{i})$;
\item[{L.2.}] Each $\phi_{i}:X_{i}\rightarrow X$ is a imbedding;
\item[{L.3.}] $\phi_{i}(X_{i})\cap\phi_{j}(X_{j})\doteq\phi_{j}f_{i,j-1}(Y_{i,j-1})$ if $i<j$ and $f_{i}$ and $f_{j}$ are
semicomponible;
\item[{L.4.}] $\phi_{i}(X_{i})\cap\phi_{j}(X_{j})=\emptyset$ if $f_{i}$ and $f_{j}$ are not semicomponible;
\end{enumerate}
where $\doteq$ indicates, besides the equality of sets, the
following: If $x\in\phi_{i}(X_{i})\cap\phi_{j}(X_{j})$, say
$x=\phi_{i}(x_{i})=\phi_{j}(x_{j})$, with $x_{i}\in X_{i}$ and
$x_{j}\in X_{j}$, then we have necessarily $x_{i}\in Y_{i,j-1}$ and
$x_{j}=f_{i,j-1}(x_{i})$.
\end{Defn}

\begin{Rem}
The ``pointwise identity'' indicated by $\doteq$ {\rm L.3} reduced
to identity of sets indicates only that
$$\phi_{i}(X_{i})\cap\phi_{j}(X_{j})=\phi_{i}(Y_{i,j-1})\cap\phi_{j}f_{i,j-1}(Y_{i,j-1}).$$
\end{Rem}

The existence of different interpretations of the condition L.3 is
very important. Furthermore, equivalent conditions to those of the
definition can be very useful. The next results give us some
practical interpretations and equivalences.

\begin{lem}\label{Lema.Inicial.3}
Let $\{X,\phi_{i}\}$ be a limit space for the CIS $\SIF$ and suppose
that $f_{i}$ and $f_{j}$ are semicomponible, with $i<j$. Then
$\phi_{j}f_{i,j-1}(y_{i})=\phi_{i}(y_{i})$, for all $y_{i}\in
Y_{i,j-1}$.
\end{lem}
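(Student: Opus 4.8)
The plan is to fix an arbitrary $y_{i}\in Y_{i,j-1}$ and to identify the point $x:=\phi_{j}f_{i,j-1}(y_{i})\in X$ with $\phi_{i}(y_{i})$ by playing off the two halves of condition L.3 against each other: its set-theoretic content locates $x$ inside the overlap $\phi_{i}(X_{i})\cap\phi_{j}(X_{j})$, while its pointwise content (the meaning of $\doteq$) then forces the preimages under $\phi_{i}$ and $\phi_{j}$ to be related by $f_{i,j-1}$. Throughout I would use that $f_{i}$ and $f_{j}$ are semicomponible with $i<j$, so that $f_{i,j-1}$ and $Y_{i,j-1}$ are defined.

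First I would observe that, by the equality of sets built into L.3, we have $\phi_{i}(X_{i})\cap\phi_{j}(X_{j})=\phi_{j}f_{i,j-1}(Y_{i,j-1})$. Since $y_{i}\in Y_{i,j-1}$, this gives immediately $x=\phi_{j}f_{i,j-1}(y_{i})\in\phi_{i}(X_{i})\cap\phi_{j}(X_{j})$. Next I would invoke the pointwise part of $\doteq$: writing $x=\phi_{i}(x_{i})=\phi_{j}(x_{j})$ with $x_{i}\in X_{i}$ and $x_{j}\in X_{j}$, condition L.3 guarantees that $x_{i}\in Y_{i,j-1}$ and $x_{j}=f_{i,j-1}(x_{i})$. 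Now comes the comparison. On one hand $x=\phi_{j}(x_{j})$, and on the other hand $x=\phi_{j}f_{i,j-1}(y_{i})$; since $\phi_{j}$ is an imbedding, hence injective by L.2, we deduce $x_{j}=f_{i,j-1}(y_{i})$. Combining this with $x_{j}=f_{i,j-1}(x_{i})$ yields $f_{i,j-1}(x_{i})=f_{i,j-1}(y_{i})$.

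To finish, I would use that $f_{i,j-1}$ is itself an injection (being a composite of the injective maps $f_{i},\dots,f_{j-1}$ restricted appropriately, as set up inductively in Section \ref{Section.CIS}), so that $f_{i,j-1}(x_{i})=f_{i,j-1}(y_{i})$ forces $x_{i}=y_{i}$. Then $x=\phi_{i}(x_{i})=\phi_{i}(y_{i})$, which is exactly the claimed identity $\phi_{j}f_{i,j-1}(y_{i})=\phi_{i}(y_{i})$.

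The argument is essentially a bookkeeping exercise rather than a deep one, so I do not expect a genuine obstacle; the only point requiring care is to keep the two layers of L.3 distinct and to apply each injectivity statement to the correct map. The subtle step is recognizing that the pointwise clause of $\doteq$ does not by itself relate $y_{i}$ to $x_{i}$—it only constrains the \emph{representatives} $x_{i},x_{j}$ of the single point $x$—so the identification $x_{i}=y_{i}$ must be extracted afterward, via injectivity of $\phi_{j}$ followed by injectivity of $f_{i,j-1}$.
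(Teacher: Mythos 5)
Your proof is correct and follows essentially the same route as the paper's: use the set-theoretic half of L.3 to place $\phi_{j}f_{i,j-1}(y_{i})$ in $\phi_{i}(X_{i})$, then the pointwise clause of $\doteq$ together with injectivity of $f_{i,j-1}$ (as a composite of injections) to conclude $x_{i}=y_{i}$. The only difference is cosmetic: the paper applies the pointwise clause directly to the representative $f_{i,j-1}(y_{i})\in X_{j}$, whereas you first take a generic representative $x_{j}$ and recover $x_{j}=f_{i,j-1}(y_{i})$ via injectivity of $\phi_{j}$ — a harmless extra step.
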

\begin{proof}
Let $y_{i}\in Y_{i,j-1}$ be an arbitrary point. By condition L.3 we
have $\phi_{j}f_{i,j-1}(y_{i})\in\phi_{i}(X_{i})$, that is,
$\phi_{j}f_{i,j-1}(y_{i})=\phi_{i}(x_{i})$ for some $x_{i}\in
X_{i}$. Again, by condition L.3, $x_{i}\in Y_{i,j-1}$ and
$f_{i,j-1}(x_{i})=f_{i,j-1}(y_{i})$. Since each $f_{k}$ is
injective, $f_{i,j-1}$ is injective, too. Therefore $x_{i}=y_{i}$,
which implies $\phi_{j}f_{i,j-1}(y_{i})=\phi_{i}(y_{i})$.
\end{proof}

\begin{lem}\label{Lema.Inicial.4}
Let $\{X,\phi_{i}\}$ be a limit space for the CIS $\SIF$, and
suppose that $f_{i}$ and $f_{j}$ are semicomponible, with $i<j$.
Then
$$\phi_{i}(X_{i}-Y_{i,j-1})\cap\phi_{j}(X_{j}-f_{i,j-1}(Y_{i,j-1}))=\emptyset.$$
\end{lem}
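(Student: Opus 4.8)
The plan is to argue by contradiction, extracting a point from the alleged intersection and feeding its two representations into the pointwise part of condition L.3. This is, after all, essentially the complementary face of L.3: it asserts that once the overlapping pieces $\phi_{i}(Y_{i,j-1})$ and $\phi_{j}f_{i,j-1}(Y_{i,j-1})$ are removed, the remaining images become genuinely disjoint.

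First I would suppose, toward a contradiction, that the set on the left is nonempty and fix a point $x$ in it. By definition of the two image sets, there are points $a\in X_{i}-Y_{i,j-1}$ and $b\in X_{j}-f_{i,j-1}(Y_{i,j-1})$ with $x=\phi_{i}(a)=\phi_{j}(b)$. In particular $x\in\phi_{i}(X_{i})\cap\phi_{j}(X_{j})$, and the standing hypotheses $i<j$ together with the semicomponibility of $f_{i}$ and $f_{j}$ are exactly those under which condition L.3 applies.

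Next I would invoke the pointwise content of L.3: taking $x_{i}=a$ and $x_{j}=b$ as the chosen representatives of $x$, that condition forces $a\in Y_{i,j-1}$ and $b=f_{i,j-1}(a)$. But $a$ was selected in $X_{i}-Y_{i,j-1}$, so $a\notin Y_{i,j-1}$, a contradiction; hence the intersection must be empty. Equivalently, one could contradict the membership of $b$ instead, since $b=f_{i,j-1}(a)$ would then lie in $f_{i,j-1}(Y_{i,j-1})$, while $b$ was chosen outside this set.

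I expect no real obstacle here, as the argument is a direct application of the pointwise clause of L.3. The only point requiring a little care is the bookkeeping that the arbitrarily chosen preimages $a$ and $b$ legitimately play the role of the points $x_{i}$ and $x_{j}$ appearing in that clause; the embeddings supplied by L.2 guarantee these preimages are unique, though the argument does not even require uniqueness, since the pointwise clause holds for every representation of $x$ as a pair of values of $\phi_{i}$ and $\phi_{j}$.
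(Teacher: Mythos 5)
Your proof is correct and follows essentially the same route as the paper's: both extract a hypothetical point of the intersection and derive a contradiction from condition L.3. The only cosmetic difference is that the paper invokes the set-equality face of $\doteq$ together with the fact that $\phi_{j}$ is an imbedding (so that $\phi_{j}(X_{j}-f_{i,j-1}(Y_{i,j-1}))=\phi_{j}(X_{j})-\phi_{j}f_{i,j-1}(Y_{i,j-1})$), whereas you invoke the pointwise clause of $\doteq$ directly, which as you note does not even require uniqueness of the representatives.
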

\begin{proof}
It is obvious that if
$x\in\phi_{i}(X_{i}-Y_{i,j-1})\cap\phi_{j}(X_{j}-f_{i,j-1}(Y_{i,j-1}))$
then
$x\in\phi_{i}(X_{i})\cap\phi_{j}(X_{j})\doteq\phi_{j}f_{i,j-1}(Y_{i,j-1})$.
But this is a contradiction, since $\phi_{j}$ is an imbedding, and
so
$\phi_{j}(X_{j}-f_{i,j-1}(Y_{i,j-1}))=\phi_{j}(X_{j})-\phi_{j}f_{i,j-1}(Y_{i,j-1})$.
\end{proof}

\begin{prop}\label{Proposicao.Condicoes.Para.E.L.F.}
Let $\SIF$ be an arbitrary CIS and let $\phi_{i}:X_{i}\rightarrow X$
be imbedding into a topological space
$X=\cup_{i=0}^{\infty}\phi_{i}(X_{i})$, such that:
\begin{enumerate}
\item[{\rm L.4.}] $\phi_{i}(X_{i})\cap\phi_{j}(X_{j})=\emptyset$ always
that $f_{i}$ and $f_{j}$ are not semicomponible;
\item[{\rm L.5.}] $\phi_{j}f_{i,j-1}(y_{i})=\phi_{i}(y_{i})$ for all $y_{i}\in Y _{i,j-1}$, always that $f_{i}$ and $f_{j}$ are
semicomponible, with $i<j$;
\item[{\rm L.6.}] $\phi_{i}(X_{i}-Y_{i,j-1})\cap\phi_{j}(X_{j}-f_{i,j-1}(Y_{i,j-1}))=\emptyset$,
always that $f_{i}$ and $f_{j}$ are semicomponible, $i<j$.
\end{enumerate}
Then $\{X,\phi_{i}\}$ is a limit space for the CIS $\SIF$.
\end{prop}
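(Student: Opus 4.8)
The plan is to verify directly the four defining conditions of a limit space. Conditions L.1 and L.2 are among the hypotheses of the proposition (namely that $X=\cup_{i=0}^{\infty}\phi_{i}(X_{i})$ and that each $\phi_{i}$ is an imbedding), and hypothesis L.4 is literally the fourth condition in the definition. Hence the entire substance of the proof is to recover L.3 from L.5 and L.6. So I fix $i<j$ with $f_{i}$ and $f_{j}$ semicomponible and aim to establish both the set equality $\phi_{i}(X_{i})\cap\phi_{j}(X_{j})=\phi_{j}f_{i,j-1}(Y_{i,j-1})$ and the pointwise refinement encoded by $\doteq$.

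The inclusion $\supseteq$ is immediate: for $y_{i}\in Y_{i,j-1}$, the point $\phi_{j}f_{i,j-1}(y_{i})$ lies in $\phi_{j}(X_{j})$ by construction, and L.5 rewrites it as $\phi_{i}(y_{i})$, placing it in $\phi_{i}(X_{i})$ as well. The reverse inclusion $\subseteq$ is the crux. Given $x=\phi_{i}(x_{i})=\phi_{j}(x_{j})$, I would split on whether $x_{i}\in Y_{i,j-1}$. If it is, L.5 immediately gives $x=\phi_{j}f_{i,j-1}(x_{i})\in\phi_{j}f_{i,j-1}(Y_{i,j-1})$. If instead $x_{i}\in X_{i}-Y_{i,j-1}$, then $x\in\phi_{i}(X_{i}-Y_{i,j-1})$, so L.6 forbids $x$ from belonging to $\phi_{j}(X_{j}-f_{i,j-1}(Y_{i,j-1}))$.

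The key technical observation — and the one spot where I expect any friction at all — is that because $\phi_{j}$ is an imbedding (hence injective) one has the identity $\phi_{j}(X_{j}-f_{i,j-1}(Y_{i,j-1}))=\phi_{j}(X_{j})-\phi_{j}f_{i,j-1}(Y_{i,j-1})$; this is exactly the rewriting already exploited in the proof of Lemma \ref{Lema.Inicial.4}. Combining it with the fact that $x\in\phi_{j}(X_{j})$ forces $x\in\phi_{j}f_{i,j-1}(Y_{i,j-1})$, which closes the inclusion $\subseteq$ and hence the set equality.

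Finally, for the pointwise assertion $\doteq$, I would start from the membership just proved, writing $x=\phi_{j}f_{i,j-1}(y_{i})$ for some $y_{i}\in Y_{i,j-1}$. Injectivity of $\phi_{j}$ then yields $x_{j}=f_{i,j-1}(y_{i})$, while L.5 together with injectivity of $\phi_{i}$ gives $\phi_{i}(x_{i})=\phi_{i}(y_{i})$ and hence $x_{i}=y_{i}\in Y_{i,j-1}$. Substituting back gives $x_{j}=f_{i,j-1}(x_{i})$, which is precisely the refinement required by $\doteq$. This exhausts L.3, and the proposition follows.
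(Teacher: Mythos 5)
Your proposal is correct and takes essentially the same route as the paper's own proof: both reduce everything to recovering L.3, use L.5 for the containment $\phi_{j}f_{i,j-1}(Y_{i,j-1})\subseteq\phi_{i}(X_{i})\cap\phi_{j}(X_{j})$, and use L.6 together with the injectivity rewriting $\phi_{j}(X_{j}-f_{i,j-1}(Y_{i,j-1}))=\phi_{j}(X_{j})-\phi_{j}f_{i,j-1}(Y_{i,j-1})$ (as in Lemma \ref{Lema.Inicial.4}) plus injectivity of $\phi_{i}$ to pin down $x_{i}\in Y_{i,j-1}$ and $x_{j}=f_{i,j-1}(x_{i})$. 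The only difference is organizational: the paper establishes $x_{i}\in Y_{i,j-1}$ by contradiction, while you run a direct case split and an explicit double inclusion; the mathematical content is identical.
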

\begin{proof}
We prove that the condition L.3 is true. Suppose that $f_{i}$ and
$f_{j}$ are semicomponible, with $i<j$. By the condition L.5, the
sets $\phi_{i}(X_{i})\cap\phi_{j}(X_{j})$ and
$\phi_{j}f_{i,j-1}(Y_{i,j-1})$ are nonempty. We will prove that they
are pointwise equal.

Let $x\in\phi_{i}(X_{i})\cap\phi_{j}(X_{j})$, say
$x=\phi_{i}(x_{i})=\phi_{j}(x_{j})$ with $x_{i}\in X_{i}$ and
$x_{j}\in X_{j}$. Suppose, by contradiction, that $x_{i}\notin
Y_{i,j-1}$. Then $\phi_{i}(x_{i})\in\phi_{i}(X_{i}-Y_{i,j-1})$. By
the condition L.6 we must have
$\phi_{j}(x_{j})=\phi_{i}(x_{i})\notin\phi_{j}(X_{j}-f_{i,j-1}(Y_{i,j-1}))$,
that is, $\phi_{j}(x_{j})\in\phi_{j}f_{i,j-1}(Y_{i,j-1})$. So
$x_{j}\in f_{i,j-1}(Y_{i,j-1})$. Thus, there is $y_{i}\in Y_{i,j-1}$
such that $f_{i,j-1}(y_{i})=x_{j}$. By the condition L.5,
$\phi_{i}(y_{i})=\phi_{j}f_{i,j-1}(y_{i})=\phi_{j}(x_{j})$. However,
$\phi_{j}(x_{j})=\phi_{i}(x_{i})$. It follows that
$\phi_{i}(y_{i})=\phi_{i}(x_{i})$, and so $x_{i}=y_{i}\in
Y_{i,j-1}$, which is a contradiction. Therefore $x_{i}\in
Y_{i,j-1}$.

In order to prove the remaining, take
$x\in\phi_{i}(X_{i})\cap\phi_{j}(X_{j})$,
$x=\phi_{i}(y_{i})=\phi_{j}(x_{j})$, with $y_{i}\in Y_{i,j-1}$ and
$x_{j}\in X_{j}$. We must prove that $x_{j}=f_{i,j-1}(y_{i})$. By
the condition L.5,
$\phi_{j}f_{i,j-1}(y_{i})=\phi_{i}(y_{i})=\phi_{j}(x_{j})$. Thus,
the desired identity is obtained by injectivity.

This proves that
$\phi_{i}(X_{i})\cap\phi_{j}(X_{j})\doteq\phi_{j}f_{i,j-1}(Y_{i,j-1})$
and, so, that $\{X,\phi_{i}\}$ is a limite space for $\SIF$.
\end{proof}

\newpage

\begin{cor}
The condition {\rm L.3} can be replaced by both together conditions
{\rm L.5} and {\rm L.6}.
\end{cor}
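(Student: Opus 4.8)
The plan is to read the corollary as the assertion that, in the presence of the ambient conditions L.1, L.2 and L.4, the single condition L.3 is logically equivalent to the conjunction of L.5 and L.6; once this equivalence is established, one may freely substitute the pair (L.5, L.6) for L.3 in the definition of limit space. I would therefore prove the two implications separately, and each implication has in fact already been isolated in the preceding results.

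For the forward direction, I would assume that $\{X,\phi_i\}$ is a limit space, so that L.1--L.4 hold and in particular L.3 holds. Lemma \ref{Lema.Inicial.3} then yields precisely the identity $\phi_j f_{i,j-1}(y_i)=\phi_i(y_i)$ for all $y_i\in Y_{i,j-1}$ whenever $f_i$ and $f_j$ are semicomponible with $i<j$, which is exactly condition L.5. Similarly, Lemma \ref{Lema.Inicial.4} gives $\phi_i(X_i-Y_{i,j-1})\cap\phi_j(X_j-f_{i,j-1}(Y_{i,j-1}))=\emptyset$ under the same hypothesis, which is exactly condition L.6. Hence L.3 implies both L.5 and L.6.

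For the converse direction, I would assume that the imbeddings $\phi_i$ satisfy L.1 and L.2 together with L.4, L.5 and L.6. This is exactly the hypothesis of Proposition \ref{Proposicao.Condicoes.Para.E.L.F.}, whose conclusion is that $\{X,\phi_i\}$ is a limit space, and in particular that L.3 holds. Combining the two implications shows that, modulo the conditions L.1, L.2 and L.4 that are common to both formulations, L.3 and the pair (L.5, L.6) carve out exactly the same class of structures, which is the content of the corollary.

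Since every nontrivial step has already been carried out in Lemmas \ref{Lema.Inicial.3} and \ref{Lema.Inicial.4} and in Proposition \ref{Proposicao.Condicoes.Para.E.L.F.}, I do not anticipate any genuine obstacle; the only point requiring care is to observe that L.4 appears in both sets of axioms and that L.1 and L.2 are part of the ambient definition, so that replacing L.3 by (L.5, L.6) really does leave the remaining conditions untouched.
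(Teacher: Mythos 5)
Your proof is correct and follows exactly the paper's argument: the forward direction is Lemmas \ref{Lema.Inicial.3} and \ref{Lema.Inicial.4}, and the converse is Proposition \ref{Proposicao.Condicoes.Para.E.L.F.}, which is precisely the one-line justification the paper gives. Your version merely spells out the logical bookkeeping (which conditions are ambient and which are being exchanged) more explicitly than the paper does.
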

\begin{proof}
The Lemmas \ref{Lema.Inicial.3} e \ref{Lema.Inicial.4} and
Proposition \ref{Proposicao.Condicoes.Para.E.L.F.} implies that.
\end{proof}

\begin{thm} \label{Teo-Bijecao}
Let $\SIF$ be a CIS. Assume that $\{X,\phi_{i}\}$ and
$\{Z,\psi_{i}\}$ are two limit spaces for this CIS. Then there is a
unique bijection (not necessarily continuous) $\beta:X\rightarrow Z$
such that $\psi_{i}=\beta\circ\phi_{i}$, for all $i\in\N$.
\end{thm}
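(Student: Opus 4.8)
The plan is to define $\beta$ by the only formula compatible with the required relation $\psi_i=\beta\circ\phi_i$, and then to verify it is a well-defined bijection. Since condition L.1 gives $X=\bigcup_{i}\phi_i(X_i)$, every $x\in X$ can be written as $x=\phi_i(x_i)$ for some $i\in\N$ and some $x_i\in X_i$; I would set $\beta(x)=\psi_i(x_i)$. The relation $\psi_i=\beta\circ\phi_i$ is then immediate from this definition. Moreover the same computation settles uniqueness right away: if $\beta'$ satisfies $\psi_i=\beta'\circ\phi_i$, then for $x=\phi_i(x_i)$ we must have $\beta'(x)=\beta'\phi_i(x_i)=\psi_i(x_i)=\beta(x)$, so by L.1 any such $\beta'$ agrees with $\beta$ on all of $X$.

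The hard part is well-definedness: the representation $x=\phi_i(x_i)$ is far from unique, so I must check that $\psi_i(x_i)$ does not depend on the chosen index. It suffices to treat two representations $x=\phi_i(x_i)=\phi_j(x_j)$ with $i<j$. Then $\phi_i(X_i)\cap\phi_j(X_j)$ is nonempty, so condition L.4 forces $f_i$ and $f_j$ to be semicomponible (otherwise this intersection would be empty). Now the pointwise part of condition L.3 yields $x_i\in Y_{i,j-1}$ and $x_j=f_{i,j-1}(x_i)$. Applying Lemma \ref{Lema.Inicial.3} to the \emph{other} limit space $\{Z,\psi_i\}$ gives $\psi_j f_{i,j-1}(y_i)=\psi_i(y_i)$ for every $y_i\in Y_{i,j-1}$; taking $y_i=x_i$ produces $\psi_j(x_j)=\psi_j f_{i,j-1}(x_i)=\psi_i(x_i)$, which is exactly the needed compatibility. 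Hence $\beta:X\to Z$ is well defined.

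Finally I would obtain bijectivity by symmetry rather than by checking injectivity and surjectivity separately. Interchanging the roles of the two limit spaces, the same construction yields a well-defined map $\gamma:Z\to X$ with $\gamma(z)=\phi_i(z_i)$ whenever $z=\psi_i(z_i)$, and satisfying $\phi_i=\gamma\circ\psi_i$. Then for $x=\phi_i(x_i)$ we compute $\gamma(\beta(x))=\gamma(\psi_i(x_i))=\phi_i(x_i)=x$, and symmetrically $\beta(\gamma(z))=z$; hence $\gamma=\beta^{-1}$ and $\beta$ is a bijection. The only genuine subtlety throughout is the well-definedness step, where L.4 rules out ``accidental'' coincidences arising from non-semicomponible pairs and Lemma \ref{Lema.Inicial.3} transports the gluing relation from $X$ over to $Z$; note that no continuity of $\beta$ is claimed or used, consistent with the statement.
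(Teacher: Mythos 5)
Your proposal is correct and follows essentially the same route as the paper: the same formula $\beta(\phi_i(x_i))=\psi_i(x_i)$, with well-definedness settled by condition L.4 (forcing semicomponibility), the pointwise part of L.3, and Lemma \ref{Lema.Inicial.3} applied to $\{Z,\psi_i\}$ — indeed you are slightly more explicit than the paper about the role of L.4 here. The only organizational difference is that you obtain bijectivity by constructing the symmetric inverse $\gamma$ and checking $\gamma\circ\beta=\mathrm{id}_X$ and $\beta\circ\gamma=\mathrm{id}_Z$, whereas the paper verifies injectivity and surjectivity of $\beta$ directly; these amount to the same computations, since the paper's injectivity check is precisely the well-definedness of your $\gamma$.
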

\begin{proof}
Define $\beta:X\rightarrow Z$ in the follow way: For each $x\in X$,
we have $x=\phi_{i}(x_{i})$, for some $x_{i}\in X_{i}$. Then, we
define $\beta(x)=\psi_{i}(x_{i})$. We have:

$\diamond$ {\it $\beta$ is well defined.} Let $x\in X$ be a point
with $x=\phi_{i}(x_{i})=\phi_{j}(x_{j})$, where $x_{i}\in X_{i}$,
$x_{j}\in X_{j}$ and $i<j$. Then
$x\in\phi_{i}(X_{i})\cap\phi_{j}(X_{j})\doteq\phi_{j}f_{i,j-1}(Y_{i,j-1})$
and $x_{j}=f_{i,j-1}(x_{i})$ by the condition L.3. Thus
$\psi_{j}(x_{j})=\psi_{j}f_{i,j-1}(x_{i})=\psi_{i}(x_{i})$, where
the latter identity follows from the condition L.3.

$\diamond$ {\it $\beta$ is injective.} Suppose that
$\beta(x)=\beta(y)$, $x,y\in X$. Consider $x=\phi_{i}(x_{i})$ and
$y=\phi_{j}(y_{j})$, $x_{i}\in X_{i}$, $y_{j}\in X_{j}$, $i<j$ (the
case where $j<i$ is symmetrical and the case where $i=j$ is
trivial). Then $\psi_{i}(x_{i})=\beta(x)=\beta(y)=\psi_{j}(y_{j})$.
It follows that
$\psi_{i}(x_{i})=\psi_{j}(y_{j})\in\psi_{i}(X_{i})\cap\psi_{j}(X_{j})\doteq\psi_{j}f_{i,j-1}(Y_{i,j-1})$.
By the condition L.3, $x_{i}\in Y_{i,j-1}$ and
$y_{j}=f_{i,j-1}(x_{i})$. By the condition L.5, it follows that
$\phi_{i}(x_{i})=\phi_{j}f_{i,j-1}(x_{i})=\phi_{j}(y_{j})$.
Therefore $x=y$.

$\diamond$ {\it $\beta$ is surjective.} Let $z\in Z$ be an arbitrary
point. Then $z=\psi_{i}(x_{i})$ for some $x_{i}\in X_{i}$. Take
$x=\phi_{i}(x_{i})$, and we have $\beta(x)=z$.

The uniqueness is trivial.
\end{proof}

\section{The fundamental limit space}\label{Section.Fundamental.L.E.} 

\begin{Defn}
Let $\{X,\phi_{i}\}$ be a limit space for the CIS $\SIF$. We say $X$
has the {\bf weak topology} (induced by collection
$\{\phi_{i}\}_{i\in\N}$) if the following sentence is true:
\begin{center}
$A\subset X$ is closed in $X$ $\Leftrightarrow$ $\phi_{i}^{-1}(A)$
is closed in $X_{i}$ for all $i\in\N$.
\end{center}

\noindent When this occurs, we say that $\{X,\phi_{i}\}$ is a {\bf
fundamental limit space} for the CIS $\SIF$.
\end{Defn}

\begin{prop} \label{Prop.Implica.Fechado}
Let $\{X,\phi_{i}\}$ be a fundamental limit space for the CIS
$\SIF$. Then $\phi_{i}(X_{i})$ is closed in $X$, for all $i\in\N$.
\end{prop}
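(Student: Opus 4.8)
The plan is to exploit the defining property of the weak topology: since $\{X,\phi_i\}$ is fundamental, the set $\phi_i(X_i)$ is closed in $X$ as soon as its preimage $\phi_k^{-1}(\phi_i(X_i))$ is closed in $X_k$ for every $k\in\N$. So I would fix $i$ and analyze $\phi_k^{-1}(\phi_i(X_i))$ for each $k$, splitting into the cases $k=i$, $k>i$ and $k<i$, and, in the latter two, according to whether $f_i$ and $f_k$ are semicomponible.

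The easy cases come first. For $k=i$, injectivity of the imbedding $\phi_i$ gives $\phi_i^{-1}(\phi_i(X_i))=X_i$, which is closed. When $f_i$ and $f_k$ are not semicomponible, condition L.4 yields $\phi_i(X_i)\cap\phi_k(X_k)=\emptyset$, so the preimage is empty. The substantive content is therefore the semicomponible cases, where I would first use L.3 to identify the preimage explicitly. If $i<k$, the pointwise part of L.3 forces $\phi_k^{-1}(\phi_i(X_i))=f_{i,k-1}(Y_{i,k-1})$; if $k<i$, the symmetric reading of L.3 (together with L.5) gives $\phi_k^{-1}(\phi_i(X_i))=Y_{k,i-1}$.

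For $k<i$ the conclusion is then quick: by Lemma~\ref{Lema.Inicial.2}, $Y_{k,i-1}=(f_{i-2}\cdots f_k)^{-1}(Y_{i-1})$ is obtained by repeatedly taking preimages of the closed sets $Y_{m+1}$ under the continuous maps $f_{k,m}$, so a short induction on $m$ (starting from $Y_{k,k}=Y_k$, closed in $X_k$) shows every $Y_{k,m}$ is closed in $X_k$. The genuinely delicate case, and the one I expect to be the main obstacle, is $i<k$: I must show that $f_{i,k-1}(Y_{i,k-1})$ is closed in $X_k$, and this is exactly where the word ``closed'' in ``closed injective system'' is used. The idea is an induction on $j$ proving $f_{i,j}(Y_{i,j})$ is closed in $X_{j+1}$. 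The base case is $f_i(Y_i)$, closed because $f_i$ is a closed map and $Y_i$ is closed. For the inductive step I would rewrite, via Lemma~\ref{Lema.Inicial.2} and the set identity $f_{i,j-1}(Y_{i,j})=f_{i,j-1}(Y_{i,j-1})\cap Y_j$, the image as $f_{i,j}(Y_{i,j})=f_j\!\left(f_{i,j-1}(Y_{i,j-1})\cap Y_j\right)$; the inductive hypothesis makes $f_{i,j-1}(Y_{i,j-1})\cap Y_j$ a closed subset of $Y_j$, and closedness of the map $f_j$ then carries it to a closed subset of $X_{j+1}$.

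Assembling the cases, $\phi_k^{-1}(\phi_i(X_i))$ is closed in $X_k$ for every $k\in\N$, so the weak topology gives that $\phi_i(X_i)$ is closed in $X$. The one point to watch is the loose notation $(f_j\circ f_{i,j-1})(Y_{i,j-1})$ in Lemma~\ref{Lema.Inicial.2}, which must be read as $f_j$ applied only to those images landing in $Y_j$; keeping track of this restriction is precisely what makes the set identity above correct and the induction go through.
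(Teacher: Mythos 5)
Your proof is correct, and its skeleton is the paper's: reduce via the weak topology to showing each preimage $\phi_{k}^{-1}(\phi_{i}(X_{i}))$ is closed in $X_{k}$, kill the trivial and non-semicomponible cases by L.4, and handle $i<k$ by an induction powered by closedness of the maps $f_{j}$, using Lemma \ref{Lema.Inicial.2} read correctly as $f_{i,j}(Y_{i,j})=f_{j}\left(f_{i,j-1}(Y_{i,j-1})\cap Y_{j}\right)$ --- your explicit flagging of this abuse of notation is exactly the right care, and it is the same mechanism the paper uses, if more tersely. Where you genuinely diverge is the case $k<i$: the paper's displayed table asserts $\phi_{k}^{-1}(\phi_{i}(X_{i}))=f_{k,i-1}(Y_{k,i-1})$ and declares this case ``similar to the fourth,'' but $f_{k,i-1}(Y_{k,i-1})$ is a subset of $X_{i}$, not of $X_{k}$, so that entry cannot be right as written; the correct identification is the one you give, $\phi_{k}^{-1}(\phi_{i}(X_{i}))=Y_{k,i-1}$, with the pointwise part of L.3 giving one inclusion and Lemma \ref{Lema.Inicial.3} the other. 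Your treatment of that case is also cheaper than an imitation of the fourth case would be: closedness of $Y_{k,i-1}$ needs only continuity of the $f_{j}$'s and closedness of the $Y_{j}$'s (inductively, $Y_{k,m+1}=f_{k,m}^{-1}(Y_{m+1})$ is closed in $Y_{k,m}$, hence in $X_{k}$), so the closed-map hypothesis is reserved for the case $i<k$, where it is genuinely indispensable. In short, your write-up matches the paper's argument where the paper is sound and repairs the one defective entry in its case analysis.
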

\begin{proof} We prove that $\phi_{j}^{-1}(\phi_{i}(X_{i}))$ is closed in
$X_{j}$ for any $i,j\in\N$. We have $$\phi_{j}^{-1}(\phi_{i}(X_{i}))=\left\{\begin{array}{ccl} X_{i} & \mbox{if} & i=j \\
\emptyset & \mbox{if} & i<j \ {\rm and} \  f_{i} \ {\rm and} \ f_{j} \ \mbox{ are not semicomponible}\\
\emptyset & \mbox{if} & i>j \ {\rm and} \ f_{j} \ {\rm and} \ f_{i} \ \mbox{ are not semicomponible}\\
f_{i,j-1}(Y_{i,j-1}) & \mbox{if} & i<j \ {\rm and} \ f_{i} \ {\rm and} \ f_{j} \ \, \mbox{are semicomponible} \\
f_{j,i-1}(Y_{j,i-1}) & \mbox{if} & i>j \ {\rm and} \ f_{j} \ {\rm and} \ f_{i} \ \, \mbox{are semicomponible} \\
\end{array} \right..$$

In the first three cases is obvious that
$\phi_{j}^{-1}(\phi_{i}(X_{i}))$ is closed in $X_{j}$. In the fourth
case we have the following: If $j=i+1$, then
$f_{i,j-1}(Y_{i,j-1})=f_{i}(Y_{i})$, which is closed in $X_{i+1}$,
since $f_{i}$ is a closed map. For $j>i+1$, since $f_{i}$ is
continuous and $Y_{i+1}$ is closed in $X_{i+1}$, them
$Y_{i,i+1}=f_{i}^{-1}(Y_{i+1})$ is closed in $X_{i}$. Thus, since
$f_{i}$ is closed, the Lemma \ref{Lema.Inicial.2} shows that
$f_{i,i+1}(Y_{i,i+1})=f_{i+1}f_{i}(Y_{i,i})=f_{i+1}f_{i}(Y_{i})$,
which is closed in $X_{i+1}$. Again by the Lemma
\ref{Lema.Inicial.2} we have
$f_{i,j-1}(Y_{i,j-1})=f_{j-1}f_{i,j-2}(Y_{i,j-2})$. Thus, by
induction it follows that $f_{i,j-1}(Y_{i,j-1})$ is closed in
$X_{j}$. The fifth case is similar to the fourth.
\end{proof}

\begin{cor}
Let $\{X,\phi_{i}\}$ be a fundamental limit space for the CIS
$\SIF$. If $X$ is compact, then each $X_{i}$ is compact.
\end{cor}
\begin{proof}
Each $X_{i}$ is homeomorphic to closed subspace $\phi_{i}(X_{i})$ of
$X$.
\end{proof}

\begin{prop} \label{Prop.Continuidade}
Let $\{X,\phi_{i}\}$ and $\{Z,\psi_{i}\}$ be two limit spaces for
the CIS $\SIF$. If $\{X,\phi_{i}\}$ is a fundamental limit space,
then the bijection $\beta:X\rightarrow Z$ of the Theorem
\ref{Teo-Bijecao} is continuous.
\end{prop}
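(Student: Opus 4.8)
The plan is to prove continuity by showing that $\beta$ pulls back closed sets to closed sets, exploiting the fact that the weak topology on $X$ reduces the question to the coordinate spaces $X_{i}$, where the maps $\psi_{i}$ are already known to be continuous. The defining relation $\psi_{i}=\beta\circ\phi_{i}$ from Theorem \ref{Teo-Bijecao} is exactly the bridge that lets one convert a statement about $\beta$ into statements about the $\psi_{i}$.

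Concretely, I would fix an arbitrary closed set $C\subseteq Z$ and aim to show that $\beta^{-1}(C)$ is closed in $X$. Since $\{X,\phi_{i}\}$ is a fundamental limit space, the weak topology criterion says that $\beta^{-1}(C)$ is closed in $X$ if and only if $\phi_{i}^{-1}(\beta^{-1}(C))$ is closed in $X_{i}$ for every $i\in\N$. So the task is to verify this latter condition for each index.

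The key computation is the identity $\phi_{i}^{-1}(\beta^{-1}(C))=(\beta\circ\phi_{i})^{-1}(C)=\psi_{i}^{-1}(C)$, which follows immediately from $\psi_{i}=\beta\circ\phi_{i}$. Now each $\psi_{i}\colon X_{i}\rightarrow Z$ is an imbedding (condition L.2 for the limit space $\{Z,\psi_{i}\}$), hence in particular continuous, so $\psi_{i}^{-1}(C)$ is closed in $X_{i}$ because $C$ is closed in $Z$. Thus $\phi_{i}^{-1}(\beta^{-1}(C))$ is closed in $X_{i}$ for every $i$, and the weak topology criterion then yields that $\beta^{-1}(C)$ is closed in $X$. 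Since $C$ was an arbitrary closed subset of $Z$, this establishes that $\beta$ is continuous.

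I do not anticipate a genuine obstacle here: the argument is a direct unwinding of definitions once the relation $\psi_{i}=\beta\circ\phi_{i}$ and the weak topology characterization are in hand. The only point requiring care is the direction of the weak topology equivalence being used — namely, that closedness of \emph{all} the restrictions $\phi_{i}^{-1}(\beta^{-1}(C))$ is \emph{sufficient} for closedness of $\beta^{-1}(C)$ — which is precisely the nontrivial implication guaranteed by $X$ carrying the weak topology rather than merely an arbitrary topology making the $\phi_{i}$ continuous. It is worth noting that this is also where the hypothesis that $\{X,\phi_{i}\}$ (and not $\{Z,\psi_{i}\}$) is the fundamental limit space is essential: the continuity of $\beta$ flows from the source side having the weak topology.
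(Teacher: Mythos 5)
Your proof is correct and is essentially identical to the paper's own argument: the paper likewise fixes a closed $A\subseteq Z$, uses the identity $\phi_{j}^{-1}(\beta^{-1}(A))=\psi_{j}^{-1}(A)$ (your computation via $\psi_{i}=\beta\circ\phi_{i}$), and invokes continuity of $\psi_{j}$ together with the weak topology on $X$ to conclude $\beta^{-1}(A)$ is closed.
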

\begin{proof}
Let $A$ be a closed subset of $Z$. We have
$\beta^{-1}(A)=\cup_{i=0}^{\infty}\phi_{i}(\psi_{i}^{-1}(A))$ and
$\phi_{j}^{-1}(\beta^{-1}(A))=\psi_{j}^{-1}(A)$. Since $\psi_{j}$ is
continuous and $X$ has the weak topology, we have that
$\beta^{-1}(A)$ is closed in $X$.
\end{proof}

\begin{thm} \label{Unicidade} {\sc (uniqueness of the fundamental limit space)}
Let $\{X,\phi_{i}\}$ and $\{Z,\psi_{i}\}$ be two fundamental limit
spaces for the CIS $\SIF$. Then, the bijection $\beta:X\rightarrow
Z$ of the Theorem \ref{Teo-Bijecao} is a homeomorphism. Moreover,
$\beta$ is the unique homeomorphism from $X$ onto $Z$ such that
$\psi_{i}=\beta\circ\phi_{i}$, for all $i\in\N$.
\end{thm}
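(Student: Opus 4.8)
The plan is to establish that $\beta$ is a homeomorphism by proving that both $\beta$ and its inverse are continuous, after which the uniqueness assertion will follow formally from the uniqueness already contained in Theorem \ref{Teo-Bijecao}. I would not reconstruct $\beta$ from scratch: Theorem \ref{Teo-Bijecao} already furnishes the unique bijection $\beta:X\rightarrow Z$ with $\psi_{i}=\beta\circ\phi_{i}$, so the entire task reduces to two continuity statements.

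First, since $\{X,\phi_{i}\}$ is a fundamental limit space, Proposition \ref{Prop.Continuidade} applies verbatim and gives that $\beta:X\rightarrow Z$ is continuous. It therefore remains only to show that $\beta^{-1}:Z\rightarrow X$ is continuous. The key point is to recognize $\beta^{-1}$ not merely as a set-theoretic inverse but as the canonical bijection associated by Theorem \ref{Teo-Bijecao} to the pair of limit spaces taken in the opposite order. Indeed, from $\psi_{i}=\beta\circ\phi_{i}$ one obtains $\beta^{-1}\circ\psi_{i}=\phi_{i}$ for every $i\in\N$, so $\beta^{-1}$ is a bijection from $Z$ to $X$ satisfying exactly the compatibility relation that characterizes the Theorem \ref{Teo-Bijecao} bijection when $\{Z,\psi_{i}\}$ plays the role of the source and $\{X,\phi_{i}\}$ the role of the target. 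By the uniqueness clause of that theorem, $\beta^{-1}$ coincides with this bijection. Now, because $\{Z,\psi_{i}\}$ is itself a fundamental limit space, a second application of Proposition \ref{Prop.Continuidade}, with the roles of $X$ and $Z$ interchanged, shows that this bijection, namely $\beta^{-1}$, is continuous. Hence $\beta$ is a homeomorphism.

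For the uniqueness of $\beta$ as a homeomorphism, I would argue as follows. Let $\gamma:X\rightarrow Z$ be any homeomorphism satisfying $\psi_{i}=\gamma\circ\phi_{i}$ for all $i\in\N$. Then $\gamma$ is in particular a bijection satisfying the relation that characterizes $\beta$ in Theorem \ref{Teo-Bijecao}; by the uniqueness of the bijection already established there, $\gamma=\beta$. Thus $\beta$ is the unique homeomorphism from $X$ onto $Z$ with the required property.

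I expect the only genuine subtlety to lie in the middle step: the proof of continuity of $\beta^{-1}$ works not by a direct topological computation but by identifying $\beta^{-1}$ with the Theorem \ref{Teo-Bijecao} bijection for the reversed pair, so that Proposition \ref{Prop.Continuidade} becomes applicable in its symmetric form. Once that identification is made explicit, everything else is a formal consequence of results already in hand, and no new topological estimate is needed.
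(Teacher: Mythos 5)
Your proposal is correct and follows essentially the same route as the paper: continuity of $\beta$ via Proposition \ref{Prop.Continuidade}, continuity of $\beta^{-1}$ by applying the same proposition with the roles of $\{X,\phi_{i}\}$ and $\{Z,\psi_{i}\}$ interchanged, and uniqueness inherited from Theorem \ref{Teo-Bijecao}. In fact, your middle step makes explicit a detail the paper leaves implicit, namely that $\beta^{-1}$ must first be identified (via the uniqueness clause of Theorem \ref{Teo-Bijecao}) with the canonical bijection for the reversed pair before the proposition can legitimately be applied to it.
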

\begin{proof}
Let $\beta':Z\rightarrow X$ be the inverse map of the bijection
$\beta$. By preceding proposition, $\beta$ and $\beta'$ are both
continuous maps. Therefore $\beta$ is a homeomorphism. The
uniqueness is the same of the Theorem \ref{Teo-Bijecao}.
\end{proof}

\begin{thm}\label{Teo.Existencia} {\sc (existence of fundamental limit space)}
Every closed injective system has a fundamental limit space.
\end{thm}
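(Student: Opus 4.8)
The plan is to realize $X$ as a quotient of the topological disjoint union $\coprod_{i=0}^{\infty} X_i$. I would define a relation on this disjoint union by declaring $(y,i)\sim(f_i(y),i+1)$ for every $i\in\N$ and every $y\in Y_i$, and let $R$ be the equivalence relation it generates. Set $X=\left(\coprod_{i=0}^{\infty}X_i\right)/R$ with the quotient topology, let $q$ be the quotient map, and define $\phi_i$ to be the restriction of $q$ to the copy of $X_i$. The decisive advantage of this construction is that the weak topology comes for free: a set $A\subset X$ is closed exactly when $q^{-1}(A)$ is closed in the disjoint union, which happens exactly when each $\phi_i^{-1}(A)=q^{-1}(A)\cap X_i$ is closed in $X_i$. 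Thus, once $\{X,\phi_i\}$ is shown to be a limit space, it is automatically a fundamental one, and the whole problem reduces to verifying the limit-space axioms.

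First I would describe the equivalence classes explicitly. Since each $f_i$ is injective, the only generating identification out of $(x_i,i)$ is $(f_i(x_i),i+1)$ (when $x_i\in Y_i$), and the only one into $(x_{i+1},i+1)$ comes from the unique preimage $f_i^{-1}(x_{i+1})$ when it exists; hence the identification ``graph'' is a disjoint union of chains meeting each level $X_k$ in at most one point. Reading off a chain gives the key description: for $i<j$ one has $\phi_i(x_i)=\phi_j(x_j)$ if and only if $f_i$ and $f_j$ are semicomponible along $x_i$, that is, $x_i\in Y_{i,j-1}$ and $x_j=f_{i,j-1}(x_i)$. This single fact yields at once the injectivity of each $\phi_i$ (each class meets a level once) together with conditions L.4, L.5 and L.6; by Proposition \ref{Proposicao.Condicoes.Para.E.L.F.} it then suffices to establish L.2.

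The main obstacle is therefore L.2: that each $\phi_i$ is an imbedding. Continuity is immediate, since $\phi_i=q|_{X_i}$, and injectivity is already in hand, so it remains to show $\phi_i$ is a closed map into $X$; being a closed continuous injection, it will then be an imbedding. Using the weak topology, for $B$ closed in $X_i$ I would verify that $\phi_k^{-1}(\phi_i(B))$ is closed in $X_k$ for every $k$. By the chain description this preimage equals $B$ when $k=i$, equals $f_{k,i-1}^{-1}(B)$ when $k<i$ (closed by continuity of $f_{k,i-1}$, whose domain $Y_{k,i-1}$ is itself closed in $X_k$), and equals $f_{i,k-1}(Y_{i,k-1}\cap B)$ when $k>i$ and $f_i,f_k$ are semicomponible (and is empty otherwise). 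The last case is the delicate one, and it is precisely the closedness computation carried out in the proof of Proposition \ref{Prop.Implica.Fechado}: since each $f_i$ is a closed map, Lemma \ref{Lema.Inicial.2} together with an induction on $k$ shows that the composite image $f_{i,k-1}(Y_{i,k-1}\cap B)$ is closed in $X_k$. Hence $\phi_i(B)$ is closed in $X$, so $\phi_i$ is a closed imbedding and L.2 holds.

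Combining these, Proposition \ref{Proposicao.Condicoes.Para.E.L.F.} shows that $\{X,\phi_i\}$ is a limit space for $\SIF$, and since $X$ carries the weak topology it is a fundamental limit space, completing the construction.
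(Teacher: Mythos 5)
Your proposal is correct and follows essentially the same route as the paper: the fundamental limit space is constructed as the attaching space $X_{0}\cup_{f_{0}}X_{1}\cup_{f_{1}}X_{2}\cup_{f_{2}}\cdots$, i.e.\ the quotient of $\coprod_{i=0}^{\infty}X_{i}$ identifying each $Y_{i}$ with $f_{i}(Y_{i})$, with the $\phi_{i}$ the projections. The paper's proof stops at stating this construction and asserts the conclusion without verification, whereas you additionally supply the chain description of the equivalence classes, the reduction via Proposition \ref{Proposicao.Condicoes.Para.E.L.F.}, and the closedness induction in the spirit of Proposition \ref{Prop.Implica.Fechado} needed for L.2 --- a completion of the same argument, not a different one.
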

\begin{proof}
Let $\SIF$ be an arbitrary CIS. Define
$\widetilde{X}=X_{0}\cup_{f_{0}}X_{1}\cup_{f_{1}}X_{2}\cup_{f_{2}}\cdots$
to be the quotient space obtained of the coproduct (or topological
sum) $\coprod_{i=0}^{\infty}X_{i}$ by identifying each $Y_{i}\subset
X_{i}$ with $f_{i}(Y_{i})\subset X_{i+1}$. Define each
$\widetilde{\varphi}_{i}:X_{i}\rightarrow\widetilde{X}$ to be the
projection from $X_{i}$ into quotient space $\widetilde{X}$. Then
$\{\widetilde{X},\widetilde{\varphi}_{i}\}$ is a fundamental limit
space for the given CIS $\SIF$.
\end{proof}

The latter two theorems implies that every CIS has, up to
homeomorphisms, a unique fundamental limit space. This will be
remembered and used many times in the article.

\section{Examples of CIS's and limit spaces}\label{Section.Examples} 

In this section we will show some interesting examples of limit
spaces. The first example is very simple and the second shows the
existence of a limit space which is not a fundamental limit space.
This example will be highlighted in the last section of this article
by proving the essentiality of certain assumptions in the
characterization of the fundamental limit space through the
Hausdorff axiom. The other examples show known spaces as fundamental
limit spaces.

\begin{Exmp}
Identity limit space.

Let $\SIF$ be the CIS with $Y_{i}=X_{i}=X$ and $f_{i}=id_{X}$, for
all $i\in\N$, where $X$ is an arbitrary topological space and
$id_{X}:X\rightarrow X$ is the identity map. It is easy to see that
$\{X,id_{X}\}$ is a fundamental limite space for $\SIF$.
\end{Exmp}

\begin{Exmp} \label{Exemplo.nao.fundamental}
Existence of limit space which is not a fundamental limit space.

Assume $X_{0}=[0,1)$ and $Y_{0}=\{0\}$. Take $X_{i}=Y_{i}=[0,1]$,
for all $i\geq1$. Let $f_{0}:Y_{0}\rightarrow X_{1}$ be the
inclusion $f(0)=0$ and $f_{i}=identity$, for all $i\geq1$.

Consider the sphere $S^1$ as a subspace of $\R^2$. Define
$$\phi_{0}:X_{0}\rightarrow S^1, \ {\rm by} \ \phi_{0}(t)=(\cos\pi
t,-\sin\pi t) \ {\rm and}$$
$$\phi_{i}:X_{i}\rightarrow S^1, \ {\rm by} \ \phi_{i}(t)=(\cos\pi
t,\sin\pi t), \ {\rm for \ all} \ i\geq1.$$

\newpage

It is easy to see that $S^1=\bigcup_{i=0}^{\infty}\phi_{i}(X_{i})$
and each $\phi_{i}$ is an imbedding onto its image. Moreover,
$\phi_{i}(X_{i})\cap\phi_{j}(X_{j})\doteq\phi_{j}f_{i,j-1}(Y_{i})$,
which implies the condition {\rm L.3}.

Therefore, $\{S^1,\phi_{i}\}$ is a limit space for the CIS $\SIF$.
However, this limit space is not a fundamental limit space, since
$\phi_{0}(X_{0})$ is not closed in $S^1$, (or again, since $S^1$ is
compact though $X_{0}$ is not). (See Figure 1 below).

\begin{figure}[!htp]
\begin{minipage}[b]{0.46\linewidth}
\centering
\includegraphics[scale=0.23]{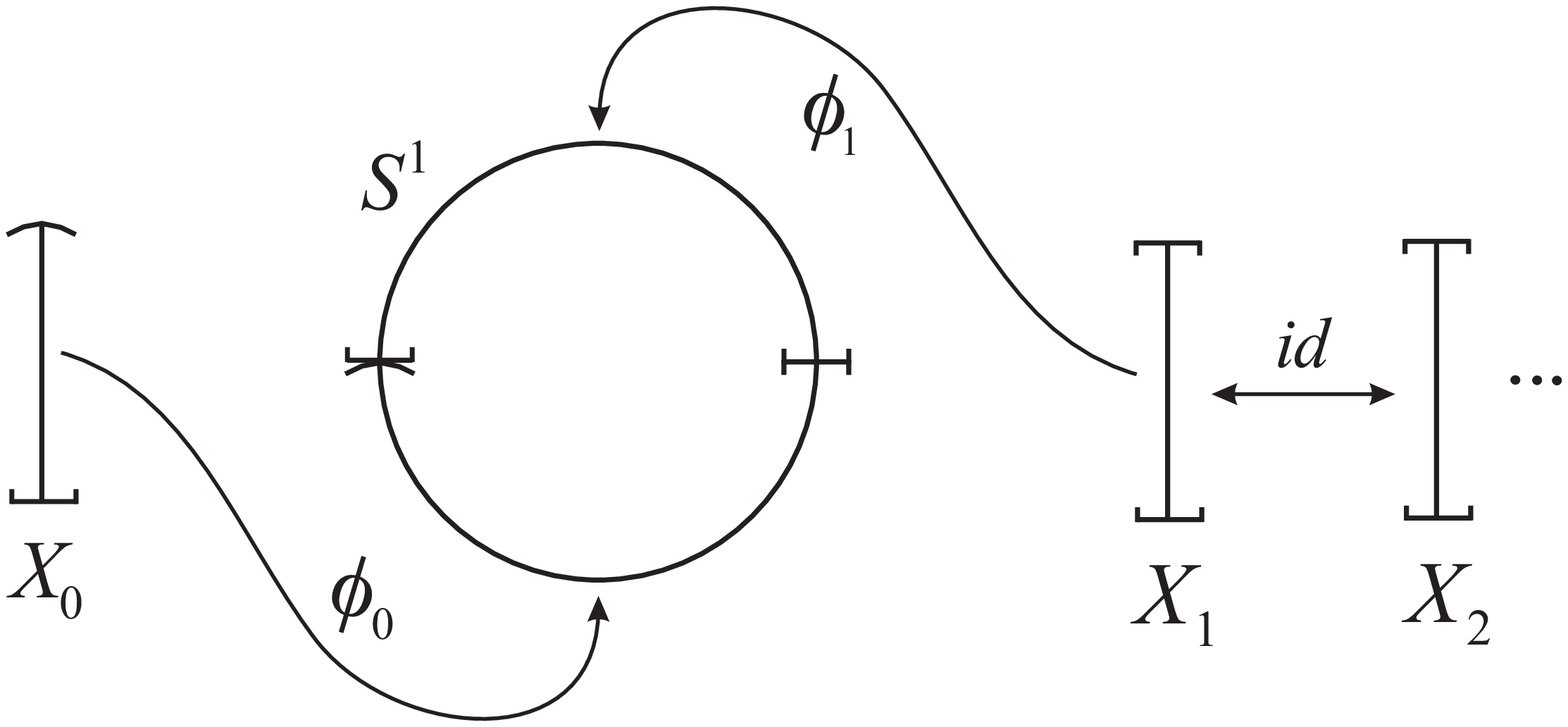}
\caption{{\small Limit space (not fundamental)}}
\end{minipage} \hfill
\begin{minipage}[b]{0.46\linewidth}
\centering
\includegraphics[scale=0.24]{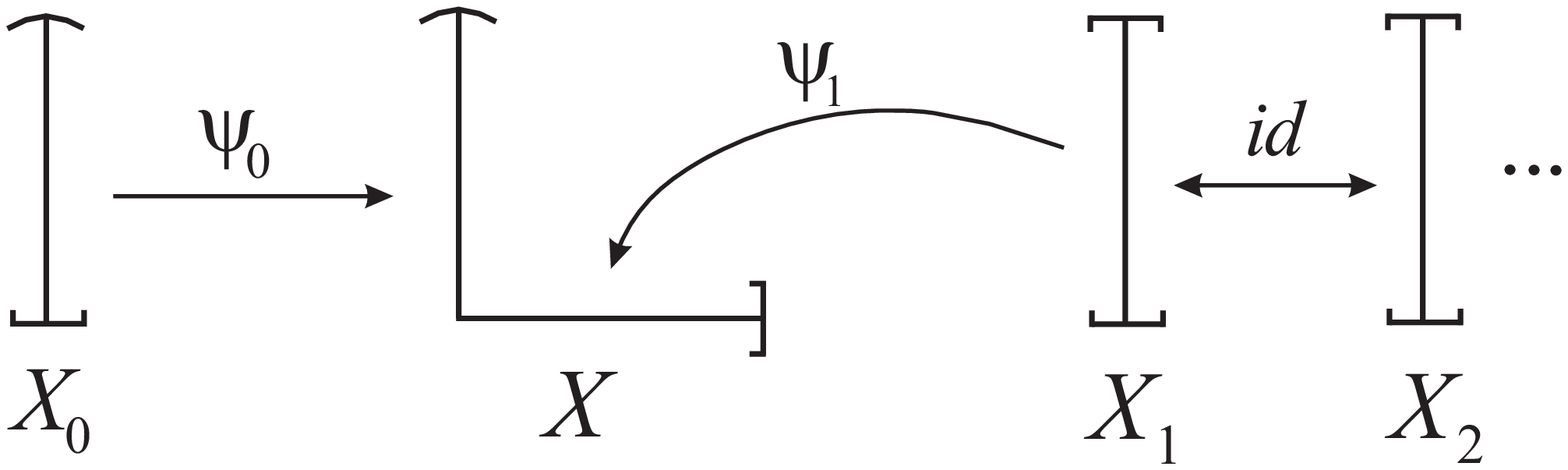}
\caption{{\small Fundamental limit space}}
\end{minipage}
\end{figure}

Now, we consider the subspace $X=\{(x,0)\in\R^2 : 0\leq
x\leq1\}\cup\{(0,y)\in\R^2 : 0\leq y<1\}$ of $\R^2$. Define
$$\psi_{0}:X_{0}\rightarrow X, \ {\rm by} \ \psi_{0}(t)=(0,t) \
{\rm and}$$
$$\psi_{i}:X_{i}\rightarrow X, \ {\rm by} \ \psi_{i}(t)=(t,0), \ {\rm for \ all} \
i\geq1.$$

We have $X=\bigcup_{i=0}^{\infty}\psi_{i}(X_{i})$, where each
$\phi_{i}$ is an imbedding onto its image, such that
$\psi_{i}(X_{i})$ is closed in $X$. Moreover, since
$\psi_{i}(X_{i})\cap\psi_{j}(X_{j})\doteq\psi_{j}f_{i,j-1}(Y_{i})$,
it follows that $\{X,\psi_{i}\}$ is a fundamental limit space for
the CIS $\SIF$. (See Figure 2 above).

(The bijection $\beta:S^{1}\rightarrow X$ of the Theorem
\ref{Teo-Bijecao} is not continuous here).

\end{Exmp}

\begin{Exmp} \label{Exemplo.S.infinito}
The infinite-dimensional sphere $S^{\infty}$.

For each $n\in\N$, we consider the $n$-dimensional sphere
$$S^n=\{(x_{1},\ldots,x_{n+1})\in\R^{n+1}:x_{1}^2+\cdots+x_{n+1}^2=1\},$$
and the ``equatorial inclusions'' $f_{n}:S^n\rightarrow S^{n+1}$
given by
$$f_{n}(x_{1},\ldots,x_{n+1})=(x_{1},\ldots,x_{n+1},0).$$ Then
$\{S^n,S^n,f_{n}\}$ is a CIS. Its fundamental limit space is
$\{S^{\infty},\phi_{n}\}$, where $S^{\infty}$ is the {\it
infinite-dimensional sphere} and, for each $n\in\N$, the imbedding
$\phi_{n}:S^n\rightarrow S^{\infty}$ is the natural ``equatorial
inclusion''.

\end{Exmp}

\begin{Exmp}
The infinite-dimensional torus $T^{\infty}$.

For each $n\geq1$, we consider the $n$-dimensional torus
$T^n=\prod_{i=1}^{n}S^1$ and the closed injections
$f_{n}:T^n\rightarrow T^{n+1}$ given by
$f_{n}(x_{1},\ldots,x_{n})=(x_{1},\ldots,x_{n},(1,0))$, where each
$x_{i}\in S^{1}$. Then $\{T^n,T^n,f_{n}\}$ is a CIS, whose
fundamental limit space is $\{T^{\infty},\phi_{n}\}$, where
$T^{\infty}=\prod_{i=1}^{\infty}S^1$ is the {\it
infinite-dimensional torus} and, for each $n\in\N$, the imbedding
$\phi_{n}:T^n\rightarrow T^{\infty}$ is the natural inclusion
$$\phi_{n}(x_{1},\ldots,x_{n})=(x_{1},\ldots,x_{n},(1,0),(1,0),\ldots).$$

\end{Exmp}

Example \ref{Exemplo.S.infinito} is a particular case the following
one:

\begin{Exmp}\label{Exemplo.CW-complex}
The CW-complexes as fundamental limit spaces for its skeletons.

Let $K$ be an arbitrary CW-complex. For each $n\in\N$, let $K^n$ be
the $n$-skeleton of $K$ and consider the natural inclusions
$l_{n}:K^n\rightarrow K^{n+1}$ of the $n$-skeleton into
$(n+1)$-skeleton. If the dimension $\dim(K)$ of $K$ is finite, then
we put $K^m=K$ and $l_{m}:K^m\rightarrow K^{m+1}$ to be the identity
map, for all $m\geq\dim(K)$. It is known that a CW-complex has the
weak topology with respect to their skeletons, that is, a subset
$A\subset K$ is closed in $K$ if and only if $A\cap K^n$ is closed
in $K^n$ for all $n$. Thus, $\{K^n,K^n,l_{n}\}$ is a CIS, whose
fundamental limit space is $\{K,\phi_{n}\}$, where each
$\phi_{n}:K^n\rightarrow K$ is the natural inclusions of the
$n$-skeleton $K^n$ into $K$.
\end{Exmp}

For details of the CW-complex theory see \cite{Hatcher} or
\cite{Whitehead}.

The example below is a consequence of the previous one.

\begin{Exmp} \label{Exemplo.RP.infinito}
The infinite-dimensional projective space $\RP^{\infty}$.

There is always a natural inclusion
$f_{n}:\RP^{n}\rightarrow\RP^{n+1}$, which is a closed injective
continuous map. {\rm(}$\RP^{n}$ is the $n$-skeleton of the
$\RP^{n+1}${\rm )}. It follows that $\{\RP^{n},\RP^{n},f_{n}\}$ is a
CIS. The fundamental limit space for this CIS is the {\it
infinite-dimensional projective space} $\RP^{\infty}$.

\end{Exmp}

For details about infinite-dimensional sphere and projective plane
see \cite{Hatcher}.

\section{The category of closed injective systems and cis-morphisms}\label{Section.Category} 

Let $\X=\{X_{i},Y_{i},f_{i}\}_{i}$ and
$\Ze=\{Z_{i},W_{i},g_{i}\}_{i}$ be two closed injective systems. By
a {\bf cis-morphism} $\mathfrak{h}:\X\rightarrow\Ze$ we mean a
collection
$$\mathfrak{h}=\{h_{i}:X_{i}\rightarrow Z_{i}\}_{i}$$ of closed continuous maps checking the following
conditions:
\begin{enumerate}
\item[{\bf 1.}] $h_{i}(Y_{i})\subset W_{i}$, for all
$i\in\N$.
\item[{\bf 2.}] $h_{i+1}\circ f_{i}=g_{i}\circ h_{i}|_{Y_{i}}$, for all
$i\in\N$.
\end{enumerate}
This latter condition is equivalent to commutativity of the diagram
below, for each $i\in\N$.

\begin{table}[h]
\centering
\begin{tabular}{c} \xymatrix{ Y_{i} \ar[d]_{f_{i}} \ar[rr]^{h_{i}|_{Y_{i}}} & & W_{i} \ar[d]^{g_{i}}\\
X_{i+1} \ar[rr]_{h_{i+1}} & & Z_{i+1} \\ }
\end{tabular}
\end{table}

We say that a cis-morphism $\mathfrak{h}:\X\rightarrow\Ze$ is a {\bf
cis-isomorphism} if each map $h_{i}:X_{i}\rightarrow Z_{i}$ is a
homeomorphism and carries $Y_{i}$ homeomorphicaly onto $W_{i}$.

\vspace{2mm}

For each arbitrary CIS, say $\X=\{X_{i},Y_{i},f_{i}\}_{i}$, there is
an identity cis-morphism $\mathfrak{1}:\X\rightarrow\X$ given by
$\mathfrak{1}_{i}:X_{i}\rightarrow X_{i}$ equal to identity map for
each $i\in\N$.

Moreover, if $\mathfrak{h}:\X^{(1)}\rightarrow\X^{(2)}$ and
$\mathfrak{k}:\X^{(2)}\rightarrow\X^{(3)}$ are two cis-morphisms,
then it is clear that its natural composition
$$\mathfrak{k}\circ\mathfrak{h}:\X^{(1)}\rightarrow\X^{(3)}$$
is a cis-morphism from $\X^{(1)}$ into $\X^{(3)}$.

Also, it is easy to check that associativity of compositions holds
whenever possible: if $\mathfrak{h}:\X^{(1)}\rightarrow\X^{(2)}$,
$\mathfrak{k}:\X^{(2)}\rightarrow\X^{(3)}$ and
$\mathfrak{r}:\X^{(3)}\rightarrow\X^{(4)}$, then
$$\mathfrak{r}\circ(\mathfrak{k}\circ\mathfrak{h})=(\mathfrak{r}\circ\mathfrak{k})\circ\mathfrak{h}.$$

This shows that the closed injective system and the cis-morphisms
between they forms a category, which we denote by $\mathfrak{Cis}$.
(See \cite{Hotman} for details on basic category theory).

\begin{thm}\label{CIS.Category.Is.Complet}
Every inductive systems on the category $\mathfrak{Cis}$ admit
limit.
\end{thm}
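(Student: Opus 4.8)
The plan is to construct the direct limit level by level, taking an ordinary direct limit of topological spaces at each index $i$ and then equipping the resulting tower with the induced subspaces and bonding maps. Write the inductive system as $\{\X^{(\lambda)}\}_{\lambda\in\Lambda}$ over a directed set $\Lambda$, with $\X^{(\lambda)}=\{X_i^{(\lambda)},Y_i^{(\lambda)},f_i^{(\lambda)}\}_i$ and transition cis-morphisms $\mathfrak{h}^{\lambda\mu}=\{h_i^{\lambda\mu}\}_i$ for $\lambda\le\mu$. For each fixed $i$, conditions {\bf 1} and {\bf 2} guarantee that $\{X_i^{(\lambda)},h_i^{\lambda\mu}\}$ and $\{Y_i^{(\lambda)},h_i^{\lambda\mu}|_{Y_i^{(\lambda)}}\}$ are genuine direct systems in the category $\mathbf{Top}$ of topological spaces and continuous maps, and that the family $\{f_i^{(\lambda)}\}$ is a morphism between them. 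I would then set $X_i^\infty=\varinjlim_\lambda X_i^{(\lambda)}$ and $Y_i^\infty=\varinjlim_\lambda Y_i^{(\lambda)}$ with the direct-limit (weak) topologies, let $\rho_i^\lambda$ denote the canonical maps into these limits, and let $f_i^\infty:Y_i^\infty\to X_{i+1}^\infty$ be the map induced by the $f_i^{(\lambda)}$. The candidate limit is $\X^\infty=\{X_i^\infty,Y_i^\infty,f_i^\infty\}_i$ together with the cocone $\rho^\lambda=\{\rho_i^\lambda\}_i$.

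Next I would verify that $\X^\infty$ is a CIS. Continuity of $f_i^\infty$ is automatic from the universal property, and injectivity I would obtain by the standard directed-colimit argument: if $y\in Y_i^{(\mu)}$ and $y'\in Y_i^{(\mu')}$ have $f_i^\infty$-images that coincide, I transport them to a common stage $\xi\ge\mu,\mu'$, where condition {\bf 2} rewrites the equality as $f_i^{(\xi)}\bigl(h_i^{\mu\xi}(y)\bigr)=f_i^{(\xi)}\bigl(h_i^{\mu'\xi}(y')\bigr)$; injectivity of the single map $f_i^{(\xi)}$ then forces $h_i^{\mu\xi}(y)=h_i^{\mu'\xi}(y')$, i.e. $y$ and $y'$ already represent the same point of $Y_i^\infty$. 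It remains to show that $Y_i^\infty$ embeds as a closed subspace of $X_i^\infty$ and that $f_i^\infty$ is closed; together with the checks that each $\rho^\lambda$ is a cis-morphism (namely that $\rho_i^\lambda$ is closed, that $\rho_i^\lambda(Y_i^{(\lambda)})\subset Y_i^\infty$, and that the squares $\rho_{i+1}^\lambda f_i^{(\lambda)}=f_i^\infty\,\rho_i^\lambda|_{Y_i^{(\lambda)}}$ commute), this makes $(\X^\infty,\{\rho^\lambda\})$ a cocone in $\mathfrak{Cis}$.

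Finally I would establish the universal property. Given any CIS $\Ze$ and a compatible cocone of cis-morphisms $\mathfrak{k}^\lambda=\{k_i^\lambda\}:\X^{(\lambda)}\to\Ze$, the universal property of the direct limit in $\mathbf{Top}$ yields, at each level $i$, a unique continuous map $u_i:X_i^\infty\to Z_i$ with $u_i\rho_i^\lambda=k_i^\lambda$; uniqueness already pins these down as the only possible candidate. I would then check that $\mathfrak{u}=\{u_i\}$ obeys conditions {\bf 1} and {\bf 2} and that each $u_i$ is a closed map, so that $\mathfrak{u}$ is the required cis-morphism $\X^\infty\to\Ze$ factoring the cocone.

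The main obstacle is closedness. Directed colimits are computed through increasing unions, and for instance $(\rho_i^\lambda)^{-1}(Y_i^\infty)=\bigcup_{\nu\ge\lambda}(h_i^{\lambda\nu})^{-1}(Y_i^{(\nu)})$ is only an \emph{increasing union of closed sets}, which for formal reasons alone need not be closed; similarly the image of a closed set under $f_i^\infty$, under $\rho_i^\lambda$, or under the universal map $u_i$ is such a union. The crux of the proof is therefore to exploit that every map in sight --- the bonding maps $f_i^{(\lambda)}$, the transition maps $h_i^{\lambda\mu}$, and the legs $k_i^\lambda$ --- is a \emph{closed} map, and to combine this with the defining property of the weak topology on the limits so that all of these unions turn out to be closed. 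I expect this closedness bookkeeping, rather than the purely diagram-theoretic construction, to be where essentially all of the difficulty lies.
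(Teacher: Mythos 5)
Your construction is, in essence, the paper's own: the paper also builds the limit column by column, regarding each column $\{X_{i}^{(n)},h_{i}^{(mn)}\}_{m,n}$ as an inductive CIS $\{X_{i}^{(n)},X_{i}^{(n)},h_{i}^{(n)}\}_{n}$ and taking its fundamental limit space $\{X_{i},\xi_{i}^{(n)}\}_{n}$ (by Theorem \ref{EL==LD} this is exactly your $X_{i}^{\infty}$, the direct limit in $\mathfrak{Top}$ with the weak topology), then setting $Y_{i}=\bigcup_{n}\xi_{i}^{(n)}(Y_{i}^{(n)})$ and gluing the maps $\xi_{i+1}^{(n)}\circ f_{i}^{(n)}\circ(\xi_{i}^{(n)})^{-1}$ into $f_{i}$, just as you propose. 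The only differences are cosmetic: you allow a general directed index set where the paper takes $\N$, and your use of $\mathfrak{Top}$-colimits actually sidesteps a hidden assumption of the paper (viewing a column as a CIS requires the transition maps $h_{i}^{(n)}$ to be injective, which the definition of cis-morphism does not grant). So the route is the same; the question is whether the step you postpone can be carried out at all.

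It cannot, not from the stated axioms, so what you defer as ``closedness bookkeeping'' is a genuine gap rather than a routine verification. As you yourself note, $(\rho_{i}^{\lambda})^{-1}(Y_{i}^{\infty})=\bigcup_{\mu\geq\lambda}(h_{i}^{\lambda\mu})^{-1}(Y_{i}^{(\mu)})$; condition \textbf{1} of a cis-morphism gives only the inclusion $Y_{i}^{(\lambda)}\subset(h_{i}^{\lambda\mu})^{-1}(Y_{i}^{(\mu)})$, and nothing bounds this union from above --- closedness of every map in sight does not help. Concretely: for all $i$ and $n$ take $X_{i}^{(n)}=\R$, $Y_{i}^{(n)}=[\frac{1}{n+1},1]$, let $f_{i}^{(n)}$ be the inclusion $[\frac{1}{n+1},1]\hookrightarrow\R$, and let every $h_{i}^{(mn)}$ be the identity of $\R$. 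Each $\X^{(n)}$ is a CIS, each $\mathfrak{h}^{(mn)}$ is a cis-morphism, and every map involved is closed (identities and closed embeddings); yet $X_{i}^{\infty}=\R$ while $Y_{i}^{\infty}=\bigcup_{n}[\frac{1}{n+1},1]=(0,1]$, which is not closed in $\R$, and $f_{i}^{\infty}:(0,1]\rightarrow\R$ is not a closed map. So the object $\X^{\infty}$ your recipe produces is not a closed injective system, and the same mechanism (a directed union of closed sets) also destroys closedness of the mediating map $u_{i}$ in your final step. You have in fact located exactly the spot where the paper's own proof is unjustified: the paper simply asserts $(\xi_{i}^{(m)})^{-1}(\bigcup_{n}\xi_{i}^{(n)}(Y_{i}^{(n)}))=Y_{i}^{(m)}$, which is precisely the equality that fails in the example above. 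Repairing the theorem therefore needs a substantive change --- for instance strengthening condition \textbf{1} to $h_{i}^{-1}(W_{i})=Y_{i}$, which restores that equality (closedness of mediating maps must then still be addressed separately) --- and not merely a more careful execution of the plan as you and the paper both state it.
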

\begin{proof}
Let $\{\X^{(n)},\mathfrak{h}^{(mn)}\}_{m,n}$ be an inductive system
of closed injective system and cis-morphisms. Then, each $\X^{(n)}$
is of the form
$\X^{(n)}=\{X_{i}^{(n)},Y_{i}^{(n)},f_{i}^{(n)}\}_{i}$ and each
$\mathfrak{h}^{(mn)}:\X^{(m)}\rightarrow\X^{(n)}$ is a cis-morphism
and, moreover,
$\mathfrak{h}^{(pq)}\circ\mathfrak{h}^{(qr)}=\mathfrak{h}^{(pr)}$,
for all $p,q,r\in\N$.

For each $m\in\N$, we write $\mathfrak{h}^{(m)}$ to be
$\mathfrak{h}^{(mn)}$ when $m=n+1$.

For each $i\in\N$, we have the inductive system
$\{X_{i}^{(n)},h_{i}^{(mn)}\}_{m,n}$, that is, the injective system
of the topological spaces $X_{i}^{(1)},X_{i}^{(2)},\ldots$ and all
continuous maps $h_{i}^{(mn)}:X_{i}^{(m)}\rightarrow X_{i}^{(n)}$,
$m,n\in\N$, of the collection $\mathfrak{h}^{(mn)}$.

Now, each inductive system $\{X_{i}^{(n)},h_{i}^{(mn)}\}_{m,n}$ can
be consider as the closed injective system
$\{X_{i}^{(n)},X_{i}^{(n)},h_{i}^{(n)}\}_{n}$. Let
$\{X_{i},\xi_{i}^{(n)}\}_{n}$ be a fundamental limit space for
$\{X_{i}^{(n)},X_{i}^{(n)},h_{i}^{(n)}\}_{n}$.

%

\vspace{-2mm}

\begin{table}[h]
\centering
\begin{tabular}{c} \xymatrix{ & & & & X_{i} \\
\ar[r]_-{h_{i}^{(qm)}} & X_{i}^{(m)} \ar[rr]_{h_{i}^{(mn)}}
\ar[rrru]^-{\xi_{i}^{(m)}} & & X_{i}^{(n)} \ar[r]_{h_{i}^{(np)}}
\ar[ru]_-{\xi_{i}^{(n)}} & }
\end{tabular}
\end{table}

\vspace{-2mm}

Then, each $\xi_{i}^{(n)}:X_{i}^{(n)}\rightarrow X_{i}$ is an
imbedding, and we have $\xi_{i}^{(m)}=\phi_{i}^{(n)}\circ
h_{i}^{(mn)}$ for all $m<n$. Moreover, $X_{i}$ has a weak topologia
induced by the collection $\{\xi_{i}^{(n)}\}_{n}$.

For any $m,n\in\N$, with $m\leq n$, we have
$$\xi_{i}^{(m)}(Y_{i}^{(m)})=\xi_{i}^{(n)}\circ
h_{i}^{(mn)}(Y_{i}^{(m)})\subset\xi_{i}^{(n)}(Y_{i}^{(n)}),$$ by
condition {\bf 1} of the definition of cis-morphism. Moreover, each
$\xi_{i}^{(n)}(Y_{i}^{(n)})$ is closed in $X_{i}$, since each
$\xi_{i}^{(n)}$ is an imbedding.

For each $i\in\N$, we define

\vspace{-5mm}
$$Y_{i}=\bigcup_{n\in\N}\xi_{i}^{(n)}(Y_{i}^{(n)}).$$

Then, by preceding paragraph, $Y_{i}$ is a union of linked closed
sets, that is, $Y_{i}$ is the union of the closed sets of the
ascendent chain
$$\xi_{i}^{(1)}(Y_{i}^{(1)})\subset\xi_{i}^{(2)}(Y_{i}^{(2)})\subset\cdots
\subset\xi_{i}^{(m)}(Y_{i}^{(m)})\subset\xi_{i}^{(m+1)}(Y_{i}^{(m+1)})\subset\cdots$$

Now, since  $\{X_{i},\xi_{i}^{(n)}\}_{n}$ is a fundamental limit
space for $\{X_{i}^{(n)},Y_{i}^{(n)},h_{i}^{(n)}\}_{n}$, for each
$m\in\N$, we have
$$(\xi_{i}^{(m)})^{-1}(Y_{i})=(\xi_{i}^{(m)})^{-1}(\cup_{n\in\N}\xi_{i}^{(n)}(Y_{i}^{(n)}))=Y_{i}^{m}
 \ {\rm which \ is \ closed \ in} \ X_{i}^{(m)}.$$

Therefore, since $X_{i}$ has the weak topology induced by the
collection $\{\xi_{i}^{(n)}\}_{n}$, it follows that $Y_{i}$ is
closed in $X_{i}$.

Now, we will build, for each $i\in\N$, an injection
$f_{i}:Y_{i}\rightarrow X_{i+1}$ making $\{X_{i},Y_{i},f_{i}\}_{i}$
a closed injective system. For each $i\in\N$, we have the diagram
shown below.

For each $x\in {\xi_{i}^{(n)}}(Y_{i}^{(n)})\subset X_{i}$, there is
a unique $y\in Y_{i}^{(n)}$ such that $\xi_{i}^{(n)}(y)=x$. Then, we
define $f_{i}(x)=(\xi_{i+1}^{(n)}\circ f_{i}^{(n)})(y)$.

\vspace{-4mm}

\begin{table}[h]
\centering
\begin{tabular}{c} \xymatrix{ Y_{i}^{(n)} \ar[d]_{f_{i}^{(n)}} \ar[rr]^{\xi_{i}^{(n)}} & & {\xi_{i}^{(n)}}(Y_{i}^{(n)}) \ar@{.>}[d]^{f_{i}}\\
X_{i+1}^{(n)} \ar[rr]_{\xi_{i+1}^{(n)}} & & X_{i+1} \\ }
\end{tabular}
\end{table}

\newpage

It is clear that each $f_{i}:{\xi_{i}^{(n)}}(Y_{i}^{(n)})\rightarrow
X_{i+1}$ is a closed injective continuous map, since each $\xi_{i}$
and $f_{i}^{(n)}$ are closed injective continuous maps.

Now, we define $f_{i}:Y_{i}\rightarrow X_{i+1}$ in the following
way: For each $x\in Y_{i}$, there is an integer $n\in\N$ such that
$x\in\xi_{i}^{(n)}(Y_{i}^{n})$. Then, there is a unique $y\in
Y_{i}^{(n)}$ such that $\xi_{i}^{(n)}(y)=x$. We define
$f_{i}(x)=(\xi_{i+1}^{(n)}\circ f_{i}^{(n)})(y)$.

Each $f_{i}:Y_{i}\rightarrow X_{i+1}$ is well defined. In fact:
suppose that $x$ belong to $\xi_{i}^{(m)}(Y_{i}^{m})\cap
\xi_{i}^{(n)}(Y_{i}^{n})$. Suppose, without loss of generality, that
$m<n$. There are unique $y_{m}\in Y_{i}^{m}$ and $y_{n}\in
Y_{i}^{n}$, such that
$\xi_{i}^{(m)}(y_{m})=y=\phi_{i}^{(n)}(y_{n})$. Then,
$y_{n}=h_{i}^{(mn)}(y_{m})$. Thus,
$$\xi_{i+1}^{(n)}\circ f_{i}^{(n)}(y_{n})=\xi_{i+1}^{(n)}\circ f_{i}^{(n)}\circ h_{i}^{(mn)}(y_{m})=
\xi_{i+1}^{(n)}\circ h_{i+1}^{(mn)}\circ f_{i}^{(m)}(y_{m})=
\xi_{i+1}^{(m)}\circ f_{i}^{(m)}(y_{m}).$$

Now, since each $f_{i}:Y_{i}\rightarrow X_{i+1}$ is obtained of a
collection of closed injective continuous maps which coincides on
closed sets, it follows that each $f_{i}$ is a closed injective
continuous map.

This proves that $\{X_{i},Y_{i},f_{i}\}_{i}$ is a closed injective
system. Denote it by $\X$.

For each $n\in\N$, let $\mathcal{E}^{(n)}:\X^{(n)}\rightarrow\X$ be
the collection
$$\mathcal{E}^{(n)}=\{\xi_{i}^{(n)}:X_{i}^{(n)}\rightarrow
X_{i}\}_{i}.$$ It is clear by the construction that
$\mathcal{E}^{(n)}$ is a cis-morphism from $\X^{(n)}$ into $\X$.
Moreover, we have
$\mathcal{E}^{(m)}=\mathcal{h}^{(mn)}\circ\mathcal{E}^{(n)}$.
Therefore, $\{\X,\mathcal{E}^{(n)}\}_{n}$ is a direct limit for the
inductive system $\{\X^{(n)},\mathfrak{h}^{(mn)}\}_{m,n}$.
\end{proof}

\section{The transition to fundamental limit space as a
functor}\label{Section.Passagem.ao.limite} 

Henceforth, we will write $\mathfrak{Top}$ to denote the category of
the topological spaces and continuous maps.

For each CIS $\X=\{X_{i},Y_{i},f_{i}\}_{i}$, we will denote its
fundamental limite space by $\pounds(\X)$. The passage to the
fundamental limit defines a function
$$\pounds:\mathfrak{Cis}\longrightarrow\mathfrak{Top}$$ which associates to each
CIS $\X$ its fundamental limit space $\pounds(\X)=\{X,\phi_{i}\}$.

\begin{thm}\label{Theorem.Fundamental.Induced.Map}
Let $\mathfrak{h}:\X\rightarrow\Ze$ be a cis-morphism between closed
injective systems, and let $\pounds(\X)=\{X,\phi_{i}\}_{i}$ and
$\pounds(\Ze)=\{Z,\psi_{i}\}_{i}$ be the fundamental limit spaces
for $\X$ and $\Ze$, respectively. Then, there is a unique closed
continuous map $\pounds\mathfrak{h}:X\rightarrow Z$ such that
$\pounds\mathfrak{h}\circ\phi_{i}=\psi_{i}\circ h_{i}$, for all
$i\in\N$.
\end{thm}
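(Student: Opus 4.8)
The plan is to construct $\pounds\mathfrak{h}$ pointwise using the defining property of the fundamental limit space, then invoke the weak topology to get continuity and closedness. Since $\pounds(\X)=\{X,\phi_i\}$ satisfies L.1, every $x\in X$ can be written $x=\phi_i(x_i)$ for some $i$ and some $x_i\in X_i$. The only natural candidate is
\[
\pounds\mathfrak{h}(x)=\psi_i\,h_i(x_i),
\]
and the prescribed relation $\pounds\mathfrak{h}\circ\phi_i=\psi_i\circ h_i$ forces this formula, so uniqueness is immediate once the map is shown to exist. Everything hinges on verifying that this assignment is well defined and then checking topological properties.

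First I would prove well-definedness, which I expect to be the main obstacle. Suppose $x=\phi_i(x_i)=\phi_j(x_j)$ with $i<j$. By L.3 applied to $\pounds(\X)$, the spaces $f_i$ and $f_j$ must be semicomponible, $x_i\in Y_{i,j-1}$, and $x_j=f_{i,j-1}(x_i)$. I must show $\psi_i\,h_i(x_i)=\psi_j\,h_j(x_j)=\psi_j\,h_j f_{i,j-1}(x_i)$. The key computation is that the cis-morphism condition \textbf{2}, namely $h_{k+1}f_k=g_k\,(h_k|_{Y_k})$, iterates to give a relation of the form $h_j\circ f_{i,j-1}=g_{i,j-1}\circ(h_i|_{Y_{i,j-1}})$ on $Y_{i,j-1}$; this requires an induction on $j-i$ checking that $h_i$ carries $Y_{i,j-1}$ into the domain of $g_{i,j-1}$ (using condition \textbf{1} repeatedly) and that the iterated composites match. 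Granting this, $\psi_j\,h_j f_{i,j-1}(x_i)=\psi_j\,g_{i,j-1}\,h_i(x_i)$, and then Lemma \ref{Lema.Inicial.3} applied to the fundamental limit space $\pounds(\Ze)=\{Z,\psi_i\}$ gives $\psi_j\,g_{i,j-1}(w)=\psi_i(w)$ for $w=h_i(x_i)\in Y_{i,j-1}^{\Ze}$, yielding $\psi_i\,h_i(x_i)$ as required. The subtlety is bookkeeping: one must confirm semicomponibility transfers correctly and that $h_i(x_i)$ really lands in the appropriate iterated domain $W_{i,j-1}$ so that Lemma \ref{Lema.Inicial.3} is applicable.

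Next I would establish continuity. Since $X$ is the fundamental limit space, it carries the weak topology: a set is closed in $X$ iff its preimage under each $\phi_i$ is closed in $X_i$. Given $A\subset Z$ closed, I compute $\phi_i^{-1}\bigl((\pounds\mathfrak{h})^{-1}(A)\bigr)=(\pounds\mathfrak{h}\circ\phi_i)^{-1}(A)=(\psi_i\circ h_i)^{-1}(A)=h_i^{-1}\psi_i^{-1}(A)$, which is closed in $X_i$ because $\psi_i$ and $h_i$ are continuous. By the weak topology, $(\pounds\mathfrak{h})^{-1}(A)$ is closed in $X$, so $\pounds\mathfrak{h}$ is continuous.

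Finally, for closedness, let $C\subset X$ be closed. Then $\pounds\mathfrak{h}(C)=\bigcup_i \pounds\mathfrak{h}(C\cap\phi_i(X_i))=\bigcup_i \psi_i\bigl(h_i(\phi_i^{-1}(C))\bigr)$, using that $\phi_i(X_i)$ is closed in $X$ by Proposition \ref{Prop.Implica.Fechado} and that the restriction of a continuous map to these pieces is governed by the relation $\pounds\mathfrak{h}\circ\phi_i=\psi_i h_i$. Each $\phi_i^{-1}(C)$ is closed in $X_i$, so $h_i(\phi_i^{-1}(C))$ is closed in $Z_i$ ($h_i$ closed), and $\psi_i$ of it is closed in $\psi_i(Z_i)$, hence closed in $Z$ since $\psi_i(Z_i)$ is closed in $Z$ (again Proposition \ref{Prop.Implica.Fechado}). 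To conclude $\pounds\mathfrak{h}(C)$ is closed I would verify the weak-topology criterion on $Z$ directly, showing $\psi_j^{-1}(\pounds\mathfrak{h}(C))$ is closed in $Z_j$ for each $j$; this reduces, via the same case analysis on semicomponibility used in Proposition \ref{Prop.Implica.Fechado}, to finite intersections and images of closed sets, all of which are closed. This last verification is the most tedious but is routine once the pieces above are in place.
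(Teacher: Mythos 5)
Your construction of $\pounds\mathfrak{h}$, the well-definedness argument, the continuity argument, and the uniqueness remark coincide with the paper's proof: the paper also defines $\pounds\mathfrak{h}(\phi_i(x_i))=\psi_i h_i(x_i)$ and verifies well-definedness through the chain $\psi_j h_j(x_j)=\psi_j h_j f_{i,j-1}(x_i)=\psi_j g_{i,j-1}h_i(x_i)=\psi_i h_i(x_i)$. You are in fact more careful than the paper, which never spells out the induction giving $h_i(Y_{i,j-1})\subset W_{i,j-1}$, the iterated identity $h_j\circ f_{i,j-1}=g_{i,j-1}\circ h_i|_{Y_{i,j-1}}$, or the transfer of semicomponibility from the $f$'s to the $g$'s; these are exactly the bookkeeping points you flag, and they do go through.

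The genuine gap is in your closedness argument, at its last step, and it cannot be repaired. Fix $j$ and a closed $C\subset X$, and write $E_i=h_i(\phi_i^{-1}(C))$. In the decomposition $\psi_j^{-1}(\pounds\mathfrak{h}(C))=\bigcup_i\psi_j^{-1}(\psi_i(E_i))$, the indices $i\leq j$ contribute finitely many closed sets, but each $i>j$ with $g_j,g_i$ semicomponible contributes $g_{j,i-1}^{-1}(E_i)$, and there may be infinitely many nonempty terms of this kind; so the verification does \emph{not} reduce to ``finite intersections and images of closed sets'' as in Proposition \ref{Prop.Implica.Fechado} (which handles one pair $(i,j)$ at a time), and no local finiteness is available to force this infinite union of closed sets to be closed. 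Indeed the closedness assertion itself is false. Take $X_i=\{0,1\}$ discrete, $Y_i=\{0\}$, $f_i(0)=0$, so that $\pounds(\X)$ is a countable discrete space $X$; take $Z_i=W_i=\R$, $g_i=\mathrm{id}$, so that $\pounds(\Ze)=\{\R,\mathrm{id}\}$; and take $h_i(0)=0$, $h_i(1)=1+\frac{1}{i+1}$. Each $h_i$ is closed, continuous, and satisfies conditions {\bf 1} and {\bf 2}, so $\mathfrak{h}$ is a cis-morphism; yet the image of the closed set $C=X$ is $\pounds\mathfrak{h}(X)=\{0\}\cup\{1+\frac{1}{i+1}:i\in\N\}$, which is not closed in $\R$ since it accumulates at $1$. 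Thus $\pounds\mathfrak{h}$ is continuous but not a closed map. You should know that the paper's own one-line justification --- that $\pounds\mathfrak{h}$ ``is obtained from a collection of closed continuous maps which coincide on closed sets'' --- is the same gluing argument, valid only for finite or locally finite closed covers, so this defect is inherited from the paper; only the existence, uniqueness and continuity of $\pounds\mathfrak{h}$ actually hold in general.
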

\begin{proof} Write $\mathfrak{h}=\{h_{i}:X_{i}\rightarrow Z_{i}\}_{i}$. We define the map
$\pounds\mathfrak{h}:X\rightarrow Z$ as follows: First, consider
$\pounds(\X)=\{X,\phi_{i}\}$ and $\pounds(\Ze)=\{Z,\psi_{i}\}$. For
each $x\in X$, there is $x_{i}\in X_{i}$, for some $i\in\N$, such
that $x=\phi_{i}(x_{i})$. Then, we define
$$\pounds\mathfrak{h}(x)=\psi_{i}\circ h_{i}(x_{i}).$$

This map is well defined. In fact, if
$x=\phi_{i}(x_{i})=\phi_{j}(x_{j})$, with $i<j$, then
$x\in\phi_{i}(X_{i})\cap\phi_{j}(X_{j})\doteq\phi_{j}f_{i,j-1}(Y_{i,j-1})$
and $x_{j}=f_{i,j-1}(x_{i})$. Thus,
$$\psi_{j}\circ h_{j}(x_{j})=\psi_{j}\circ h_{j}\circ
f_{i,j-1}(x_{i})=\psi_{j}\circ g_{i,j-1}\circ
h_{i}(x_{i})=\psi_{i}\circ h_{i}(x_{i}).$$

Now, since $\pounds\mathfrak{h}$ is obtained from a collection of
closed continuous maps which coincide on closed sets, it is easy to
see that $\pounds\mathfrak{h}$ is a closed continuous map.

Moreover, it is easy to see that $\pounds\mathfrak{h}$ is the unique
continuous map from $X$ into $Z$ which verifies, for each $i\in\N$,
the commutativity $\pounds\mathfrak{h}\circ\phi_{i}=\psi_{i}\circ
h_{i}$.
\end{proof}

Sometimes, we write
$\pounds\mathfrak{h}:\pounds(\X)\rightarrow\pounds(\Ze)$ instead
$\pounds\mathfrak{h}:X\rightarrow Y$. This map is called the {\bf
fundamental map} induced by $\mathfrak{h}$.


\begin{cor}
The transition to the fundamental limit space is a functor from the
category $\mathfrak{Cis}$ into the category $\mathfrak{Top}$.
\end{cor}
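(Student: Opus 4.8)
The plan is to verify the two functor axioms directly, since all the substantive content has already been established in Theorem~\ref{Theorem.Fundamental.Induced.Map}. That theorem shows $\pounds$ sends each object $\X$ of $\mathfrak{Cis}$ to a topological space $X=\pounds(\X)$ (well-defined up to homeomorphism by Theorems~\ref{Unicidade} and~\ref{Teo.Existencia}) and sends each cis-morphism $\mathfrak{h}:\X\rightarrow\Ze$ to a continuous map $\pounds\mathfrak{h}:\pounds(\X)\rightarrow\pounds(\Ze)$, characterized uniquely by the relations $\pounds\mathfrak{h}\circ\phi_{i}=\psi_{i}\circ h_{i}$ for all $i\in\N$. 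So what remains is only to check preservation of identities and of composition.

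First I would check identities. Fix a CIS $\X=\{X_{i},Y_{i},f_{i}\}_{i}$ with $\pounds(\X)=\{X,\phi_{i}\}$, and let $\mathfrak{1}:\X\rightarrow\X$ be the identity cis-morphism, whose components are the identity maps $\mathfrak{1}_{i}=\mathrm{id}_{X_{i}}$. The identity map $\mathrm{id}_{X}:X\rightarrow X$ satisfies $\mathrm{id}_{X}\circ\phi_{i}=\phi_{i}=\phi_{i}\circ\mathfrak{1}_{i}$ for every $i$, so it fulfils the defining commutativity relation for $\pounds\mathfrak{1}$. By the uniqueness clause of Theorem~\ref{Theorem.Fundamental.Induced.Map}, $\pounds\mathfrak{1}=\mathrm{id}_{X}=\mathrm{id}_{\pounds(\X)}$.

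Next I would check composition. Let $\mathfrak{h}:\X^{(1)}\rightarrow\X^{(2)}$ and $\mathfrak{k}:\X^{(2)}\rightarrow\X^{(3)}$ be cis-morphisms, with fundamental limit spaces $\pounds(\X^{(1)})=\{X,\phi_{i}\}$, $\pounds(\X^{(2)})=\{Z,\psi_{i}\}$, $\pounds(\X^{(3)})=\{W,\chi_{i}\}$. Writing $\mathfrak{h}=\{h_{i}\}$ and $\mathfrak{k}=\{k_{i}\}$, the defining relations give $\pounds\mathfrak{h}\circ\phi_{i}=\psi_{i}\circ h_{i}$ and $\pounds\mathfrak{k}\circ\psi_{i}=\chi_{i}\circ k_{i}$. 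Composing and substituting, I compute
\[
(\pounds\mathfrak{k}\circ\pounds\mathfrak{h})\circ\phi_{i}
=\pounds\mathfrak{k}\circ(\psi_{i}\circ h_{i})
=(\pounds\mathfrak{k}\circ\psi_{i})\circ h_{i}
=\chi_{i}\circ(k_{i}\circ h_{i}).
\]
Since $k_{i}\circ h_{i}$ is precisely the $i$-th component of $\mathfrak{k}\circ\mathfrak{h}$, the composite $\pounds\mathfrak{k}\circ\pounds\mathfrak{h}$ satisfies the same commutativity relation that uniquely characterizes $\pounds(\mathfrak{k}\circ\mathfrak{h})$. Invoking the uniqueness clause of Theorem~\ref{Theorem.Fundamental.Induced.Map} once more yields $\pounds(\mathfrak{k}\circ\mathfrak{h})=\pounds\mathfrak{k}\circ\pounds\mathfrak{h}$.

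There is no real obstacle here: the functoriality is essentially built into the uniqueness part of Theorem~\ref{Theorem.Fundamental.Induced.Map}, which is the standard mechanism by which a universal-property construction becomes functorial. The only point demanding a word of care is well-definedness of $\pounds$ on objects — that $\pounds(\X)$ is a genuine object of $\mathfrak{Top}$ independent of choices — but this is furnished by the existence and uniqueness-up-to-homeomorphism results already proved, so strictly speaking $\pounds$ lands in $\mathfrak{Top}$ only after fixing a representative of each homeomorphism class (or one works in $\mathfrak{Top}$ up to the canonical isomorphisms). With that understood, the verification above completes the proof.
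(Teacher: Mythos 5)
Your proof is correct and matches the paper's intent exactly: the paper states this corollary without proof, treating it as an immediate consequence of the existence-and-uniqueness statement of Theorem~\ref{Theorem.Fundamental.Induced.Map}, and your verification of identity and composition preservation via that uniqueness clause is precisely the argument being left implicit. Your closing remark about fixing representatives of homeomorphism classes on objects is a legitimate point of care that the paper also glosses over when it writes ``we will denote its fundamental limit space by $\pounds(\X)$.''
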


For details on functors see \cite{Hotman}.

\begin{cor}
If $\mathfrak{h}:\X\rightarrow\Ze$ is a cis-isomorphism, then the
fundamental map
$\pounds\mathfrak{h}:\pounds(\X)\rightarrow\pounds(\Ze)$ is a
homeomorphism.
\end{cor}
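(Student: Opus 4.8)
The plan is to exploit the functoriality of $\pounds$ established in the preceding corollary, by producing an explicit inverse cis-morphism to $\mathfrak{h}$ and transporting it through the functor; a cis-isomorphism is precisely an isomorphism in $\mathfrak{Cis}$, and a functor carries isomorphisms to isomorphisms.

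First I would construct the candidate inverse. Since $\mathfrak{h}=\{h_{i}\}$ is a cis-isomorphism, each $h_{i}:X_{i}\rightarrow Z_{i}$ is a homeomorphism carrying $Y_{i}$ homeomorphically onto $W_{i}$; hence each $h_{i}^{-1}:Z_{i}\rightarrow X_{i}$ is again a closed continuous map, indeed a homeomorphism, with $h_{i}^{-1}(W_{i})=Y_{i}$. I would then check that $\mathfrak{h}^{-1}:=\{h_{i}^{-1}\}$ is a cis-morphism $\Ze\rightarrow\X$. Condition $\mathbf{1}$ is immediate from $h_{i}(Y_{i})=W_{i}$. For condition $\mathbf{2}$, starting from the defining relation $h_{i+1}\circ f_{i}=g_{i}\circ h_{i}|_{Y_{i}}$ and applying it to $y=h_{i}^{-1}(w)$ for $w\in W_{i}$, one gets $g_{i}(w)=h_{i+1}(f_{i}(h_{i}^{-1}(w)))$, so that $h_{i+1}^{-1}\circ g_{i}|_{W_{i}}=f_{i}\circ h_{i}^{-1}|_{W_{i}}$, which is exactly the commutativity required of $\mathfrak{h}^{-1}$.

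Next I would invoke functoriality. By construction $\mathfrak{h}^{-1}\circ\mathfrak{h}=\mathfrak{1}$ on $\X$ and $\mathfrak{h}\circ\mathfrak{h}^{-1}=\mathfrak{1}$ on $\Ze$, as cis-morphisms. Since $\pounds$ is a functor, it preserves identities and composites, giving $\pounds\mathfrak{h}^{-1}\circ\pounds\mathfrak{h}=\mathrm{id}_{\pounds(\X)}$ and $\pounds\mathfrak{h}\circ\pounds\mathfrak{h}^{-1}=\mathrm{id}_{\pounds(\Ze)}$. By Theorem \ref{Theorem.Fundamental.Induced.Map} both $\pounds\mathfrak{h}$ and $\pounds\mathfrak{h}^{-1}$ are continuous maps, and the two identities above exhibit them as mutually inverse; hence $\pounds\mathfrak{h}$ is a homeomorphism.

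I expect no serious obstacle here: the only genuine verification is that $\{h_{i}^{-1}\}$ satisfies the two cis-morphism axioms, and this is routine precisely because each $h_{i}$ is a homeomorphism restricting to a homeomorphism $Y_{i}\cong W_{i}$, which makes every inverse map automatically closed and continuous. The content is organizational rather than technical. As an alternative route that avoids the explicit inverse, I could instead note that Theorem \ref{Theorem.Fundamental.Induced.Map} already presents $\pounds\mathfrak{h}$ as a closed continuous map, then argue directly that it is a bijection using the bijectivity of each $h_{i}$ together with conditions L.1--L.4 of the limit space, and finally use that a closed continuous bijection is a homeomorphism.
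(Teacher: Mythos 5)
Your proposal is correct and follows exactly the route the paper intends: the paper states this corollary without proof, as an immediate consequence of the preceding corollary that $\pounds$ is a functor, and your argument simply makes explicit the one verification the paper leaves implicit, namely that a cis-isomorphism has an inverse cis-morphism $\{h_{i}^{-1}\}$ and is therefore an isomorphism in $\mathfrak{Cis}$, which any functor carries to an isomorphism in $\mathfrak{Top}$. Your check of conditions \textbf{1} and \textbf{2} for $\mathfrak{h}^{-1}$ is exactly right, so there is nothing to correct.
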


This implies that isomorphic closed injective systems have
homeomorphic fundamental limit spaces.

\section{Compatibility of
limits}\label{Section.Compatibility.Limits} 

In this section, given a CIS $\X=\SIF$ with fundamental limit space
$\{X,\phi_{i}\}$, sometimes we write $\pounds(\X)$ to denote only
the topological space $X$. This is clear in the context.

\begin{thm}\label{Teo.Compatibilidade.Limits}
Let $\{\X^{(n)},\mathfrak{h}^{(mn)}\}_{m,n}$ be an inductive system
on the category $\mathfrak{Cis}$ and let
$\{\X,\mathcal{E}^{(n)}\}_{n}$ its direct limit. Then
$\{\pounds(\X^{(n)}),\pounds\mathfrak{h}^{(mn)}\}_{m,n}$ is an
inductive system on the category $\mathfrak{Top}$, which admits
$\pounds(\X)$ as its directed limit homeomorphic.
\end{thm}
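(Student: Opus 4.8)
The plan is to verify that $\{\pounds(\X),\pounds\mathcal{E}^{(n)}\}_{n}$ satisfies the universal property of the direct limit in $\mathfrak{Top}$ for the system $\{\pounds(\X^{(n)}),\pounds\mathfrak{h}^{(mn)}\}_{m,n}$. Since $\pounds$ is a functor, applying it to the composition relations among the $\mathfrak{h}^{(mn)}$ shows at once that $\{\pounds(\X^{(n)}),\pounds\mathfrak{h}^{(mn)}\}$ is an inductive system in $\mathfrak{Top}$, and applying it to the cocone relation $\mathcal{E}^{(m)}=\mathcal{E}^{(n)}\circ\mathfrak{h}^{(mn)}$ from the construction of $\X$ in Theorem \ref{CIS.Category.Is.Complet} shows that the maps $\pounds\mathcal{E}^{(n)}:\pounds(\X^{(n)})\to\pounds(\X)$ form a cocone, i.e. $\pounds\mathcal{E}^{(m)}=\pounds\mathcal{E}^{(n)}\circ\pounds\mathfrak{h}^{(mn)}$ for $m\leq n$. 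Throughout I write $\pounds(\X^{(n)})=\{X^{(n)},\phi_{i}^{(n)}\}$, $\pounds(\X)=\{X,\phi_{i}\}$, and freely use the defining identities $\pounds\mathcal{E}^{(n)}\circ\phi_{i}^{(n)}=\phi_{i}\circ\xi_{i}^{(n)}$ and $\pounds\mathfrak{h}^{(mn)}\circ\phi_{i}^{(m)}=\phi_{i}^{(n)}\circ h_{i}^{(mn)}$ furnished by Theorem \ref{Theorem.Fundamental.Induced.Map}, together with the identities $\xi_{i}^{(m)}=\xi_{i}^{(n)}\circ h_{i}^{(mn)}$ and $f_{i}\circ\xi_{i}^{(n)}=\xi_{i+1}^{(n)}\circ f_{i}^{(n)}$ coming from the construction of $\X$.

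The crucial topological input is that $X$ carries the final (weak) topology with respect to $\{\pounds\mathcal{E}^{(n)}\}_{n}$: a set $A\subseteq X$ is closed if and only if $(\pounds\mathcal{E}^{(n)})^{-1}(A)$ is closed in $X^{(n)}$ for every $n$. This I prove by iterating the two weak topologies already present. Since $\{X,\phi_{i}\}$ is fundamental, $A$ is closed iff $\phi_{i}^{-1}(A)$ is closed in $X_{i}$ for all $i$; since each $\{X_{i},\xi_{i}^{(n)}\}$ is fundamental, this holds iff $(\xi_{i}^{(n)})^{-1}(\phi_{i}^{-1}(A))$ is closed in $X_{i}^{(n)}$ for all $i,n$; and rewriting $\phi_{i}\circ\xi_{i}^{(n)}=\pounds\mathcal{E}^{(n)}\circ\phi_{i}^{(n)}$ and using the fundamentality of each $\{X^{(n)},\phi_{i}^{(n)}\}$, the inner quantification over $i$ for fixed $n$ becomes exactly the assertion that $(\pounds\mathcal{E}^{(n)})^{-1}(A)$ is closed in $X^{(n)}$. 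From this, coverage $X=\bigcup_{n}\pounds\mathcal{E}^{(n)}(X^{(n)})$ follows from condition L.1 at both levels, and continuity of any map out of $X$ factoring suitably through the $\pounds\mathcal{E}^{(n)}$ is automatic.

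Given a test space $T$ and a compatible family $\theta_{n}:\pounds(\X^{(n)})\to T$ (so $\theta_{m}=\theta_{n}\circ\pounds\mathfrak{h}^{(mn)}$ for $m\leq n$), I define $\theta:X\to T$ by $\theta(\pounds\mathcal{E}^{(n)}(z))=\theta_{n}(z)$. Continuity of $\theta$ is then immediate from the final-topology lemma, since $\theta\circ\pounds\mathcal{E}^{(n)}=\theta_{n}$ is continuous for each $n$, and uniqueness follows from coverage. The only real work is well-definedness, which, after using the cocone relation together with compatibility to push two representatives of a point of $X$ to a common level $n$, reduces to the following finiteness statement, which I expect to be the main obstacle.

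\medskip
\noindent\emph{Sublemma.} If $z,z'\in X^{(n)}$ satisfy $\pounds\mathcal{E}^{(n)}(z)=\pounds\mathcal{E}^{(n)}(z')$, then there is $k\geq n$ with $\pounds\mathfrak{h}^{(nk)}(z)=\pounds\mathfrak{h}^{(nk)}(z')$.
\medskip

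\noindent Granting this, well-definedness follows: $\theta_{n}(z)=\theta_{k}(\pounds\mathfrak{h}^{(nk)}(z))=\theta_{k}(\pounds\mathfrak{h}^{(nk)}(z'))=\theta_{n}(z')$. To prove the Sublemma, write $z=\phi_{i}^{(n)}(a)$ and $z'=\phi_{j}^{(n)}(b)$ with $i\leq j$; the case $i=j$ is immediate from injectivity of $\phi_{i}$ and $\xi_{i}^{(n)}$. For $i<j$, equality in $X$ forces, by condition L.3 for $\X$, semicomponibility of $f_{i},\dots,f_{j}$ and $\xi_{j}^{(n)}(b)=f_{i,j-1}(\xi_{i}^{(n)}(a))$. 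One then pushes $a$ up the chain $i\to\cdots\to j$ by repeatedly applying $f_{k}\circ\xi_{k}^{(n)}=\xi_{k+1}^{(n)}\circ f_{k}^{(n)}$; the sole difficulty is securing the successive $Y_{k}^{(\cdot)}$-memberships required to apply each $f_{k}^{(\cdot)}$, and this is handled by passing to a larger finite level through $\xi_{k}^{(n)}=\xi_{k}^{(N)}\circ h_{k}^{(nN)}$, noting that membership in $Y_{k}=\bigcup_{m}\xi_{k}^{(m)}(Y_{k}^{(m)})$ is always witnessed at some finite level. As the chain has only $j-i$ steps, a single sufficiently large $N\geq n$ suffices, yielding $h_{j}^{(nN)}(b)=f_{i,j-1}^{(N)}(h_{i}^{(nN)}(a))$ with $h_{i}^{(nN)}(a)$ in the domain of $f_{i,j-1}^{(N)}$; condition L.3 for $\X^{(N)}$ then gives $\phi_{i}^{(N)}(h_{i}^{(nN)}(a))=\phi_{j}^{(N)}(h_{j}^{(nN)}(b))$, that is $\pounds\mathfrak{h}^{(nN)}(z)=\pounds\mathfrak{h}^{(nN)}(z')$, which is the Sublemma with $k=N$.
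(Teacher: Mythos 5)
Your proposal is correct, and its skeleton coincides with the paper's: the paper likewise reduces (by uniqueness of direct limits) to the explicit construction of Theorem \ref{CIS.Category.Is.Complet} and builds a cocone of maps $\vartheta^{(n)}:X^{(n)}\rightarrow X$ defined by $\vartheta^{(n)}(\phi_{i}^{(n)}(x_{i}))=\phi_{i}(\xi_{i}^{(n)}(x_{i}))$, which are exactly your $\pounds\mathcal{E}^{(n)}$ (both satisfy the characterizing relation against the $\phi_{i}^{(n)}$, and such a map is unique by Theorem \ref{Theorem.Fundamental.Induced.Map}). The difference is in what actually gets proved. The paper constructs $\vartheta^{(n)}$ by hand, checks well-definedness via L.3, checks the compatibility $\vartheta^{(n)}\circ\pounds\mathfrak{h}^{(mn)}=\vartheta^{(m)}$, proves continuity directly, and then closes with the assertion that the universal mapping property ``is not difficult to prove'' --- it is never verified. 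You obtain the cocone and its continuity from functoriality of $\pounds$, and then supply precisely the two ingredients the paper omits: (i) the final-topology lemma for $X$ with respect to $\{\pounds\mathcal{E}^{(n)}\}_{n}$, obtained by composing the three weak topologies, which makes continuity and uniqueness of the induced map automatic; and (ii) the finiteness Sublemma, that points identified in $X$ are already identified under $\pounds\mathfrak{h}^{(nk)}$ for some finite $k$, which is the real content of well-definedness and is the standard nontrivial fact about direct limits; your finite-level-witnessing argument through $Y_{i}=\bigcup_{m}\xi_{i}^{(m)}(Y_{i}^{(m)})$ is sound. So your route is a completed version of the paper's, and this is a gain rather than a redundancy. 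One small caveat, which you share with the paper rather than diverge from it: in the last step of the Sublemma you invoke ``condition L.3 for $\X^{(N)}$,'' but L.3/L.5 as literally stated presuppose that $f_{i}^{(N)}$ and $f_{j}^{(N)}$ are semicomponible, whereas your data only directly give $h_{i}^{(nN)}(a)\in Y_{i,j-1}^{(N)}$, i.e.\ semicomponibility of $f_{i}^{(N)}$ and $f_{j-1}^{(N)}$; the clean justification of $\phi_{i}^{(N)}(h_{i}^{(nN)}(a))=\phi_{j}^{(N)}(h_{j}^{(nN)}(b))$ is the chain of identifications in the quotient model of Theorem \ref{Teo.Existencia} (cf.\ Remark \ref{Observacao.X.tilde}), which holds regardless of that extra semicomponibility.
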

\begin{proof}
By uniqueness of the direct limit, we can assume that
$\{\X,\Phi^{(n)}\}_{n}$ is the direct limit constructed in the proof
of the Theorem \ref{CIS.Category.Is.Complet}. Then, we have
$$\mathcal{E}^{(n)}:\X^{(n)}\rightarrow\X \ \ {\rm given \ by} \ \ \mathcal{E}^{(n)}=\{\xi_{i}^{(n)}:X_{i}^{(n)}\rightarrow X_{i}\}_{i},$$
where $\{X_{i},\xi_{i}^{(n)}\}_{n}$ is a fundamental limit space for
$\{X_{i}^{(n)},X_{i}^{(n)},h_{i}^{(n)}\}_{n}$.

\vspace{1mm}

By the Theorem \ref{Theorem.Fundamental.Induced.Map},
$\{\pounds(\X^{(n)}),\pounds\mathfrak{h}^{(mn)}\}_{m,n}$ is a
inductive system.

For each $n\in\N$, write
$\X^{(n)}=\{X_{i}^{(n)},Y_{i}^{(n)},f_{i}^{(n)}\}_{i}$ and
$\pounds(\X^{n})=\{X^{(n)},\phi_{i}^{(n)}\}_{i}$. Moreover, write
$\X=\{X_{i},Y_{i},f_{i}\}_{i}$ and $\pounds(\X)=\{X,\phi_{i}\}_{i}$.
Then, the inductive system
$\{\pounds(\X^{(n)}),\pounds\mathfrak{h}^{(mn)}\}_{m,n}$ can be
write as $\{X^{(n)},\pounds\mathfrak{h}^{(mn)}\}_{m,n}$.

We need to show that there is a collection of maps
$\{\vartheta^{(n)}:X^{(n)}\rightarrow X\}_{n}$ such that
$\{X,\vartheta^{(n)}\}_{n}$ is a direct limit for the system
$\{X^{(n)},\pounds\mathfrak{h}^{(mn)}\}_{m,n}$.

For each $x\in X^{(n)}$, there is a point $x_{i}\in X_{i}^{(n)}$,
for some $i\in\N$, such that $x=\phi_{i}^{(n)}(x_{i})$. We define
$\vartheta^{(n)}:X^{(n)}\rightarrow X$ by
$\vartheta^{(n)}(x)=\phi_{i}\circ\xi_{i}^{(n)}(x_{i})$.

%

The map $\vartheta^{(n)}$ is well defined. In fact: If
$x=\phi_{i}^{(n)}(x_{i})=\phi_{j}^{(n)}(x_{j})$, with $i\leq j$,
then
$x\in\phi_{i}^{(n)}(X_{i}^{(n)})\cap\phi_{j}^{(n)}(X_{j}^{(n)})\doteq\phi_{j}^{(n)}f_{i,j-1}^{(n)}(Y_{i,j-1}^{(n)})$
and $x_{j}=f_{i,j-1}^{(n)}(x_{i})$ and $x_{i}\in Y_{i,j}\subset
X_{i}$. Now, in the diagram below, the two triangles and the big
square are commutative. In it, we write $\xi_{i}^{(n)}|$ and
$\phi_{i}^{(n)}|$ to denote the obvious restriction.

It follows that
$$\phi_{j}\circ\xi_{j}^{(n)}(x_{j})=\phi_{j}\circ\xi_{j}^{(n)}\circ f_{i,j-1}{(n)}(x_{i})=\phi_{j}\circ f_{i,j-1}\circ\xi_{i}^{(n)}(x_{i})=
\phi_{i}\circ\xi_{i}^{(n)}(x_{i}).$$ It is sufficient to prove that
the map $\vartheta^{(n)}$ is well defined. Moreover, note that this
map makes the diagram above in a commutative diagram.

\newpage

\begin{table}[h]
\centering
\begin{tabular}{c} \xymatrix{ X_{i} \ar[rr]^{f_{i,j-1}} \ar[rd]_{\phi_{i}} & & X_{j} \ar[ld]^{\phi_{j}} \\
 & X & \\
 & X^{(n)} \ar@{.>}[u]_{\vartheta^{(n)}} & \\
 Y_{i,j}^{(n)} \ar[uuu]^{\xi_{i}^{(n)}|} \ar[ru]^{\phi_{i}^{(n)}|}
 \ar[rr]_{f_{i,j-1}^{(n)}} & & X_{j}^{(n)} \ar[lu]_{\phi_{j}^{(n)}}
 \ar[uuu]_{\xi_{j}^{(n)}} \\ }
\end{tabular}
\end{table}


Now, by the Theorem \ref{Theorem.Fundamental.Induced.Map} we have
$\pounds\mathfrak{h}^{(mn)}\circ\phi_{i}^{(m)}=\phi_{i}^{(n)}\circ
h_{i}^{(n)}$ for all integers $m<n$, since
$\pounds(\X^{n})=\{X^{(n)},\phi_{i}^{(n)}\}_{i}$.

Let $x\in X^{(m)}$ be an arbitrary point. Then, there is $x_{i}\in
X_{i}^{(m)}$ such that $x=\phi_{i}^{(m)}(x_{i})$. Also, for all
$n\in\N$ with $m<n$, we have
$\pounds\mathfrak{h}^{(mn)}(x)=\phi_{i}^{(n)}\circ
h_{i}^{(mn)}(x_{i})$. Thus, we have,
$$\vartheta^{(n)}\circ\pounds\mathfrak{h}^{(mn)}(x)=\phi_{i}\circ\xi_{i}^{(n)}(h_{i}^{(mn)}(x_{i}))=\phi_{i}\circ\xi^{(m)}(x_{i})=\vartheta^{(m)}(x).$$

This shows that
$\vartheta^{(n)}\circ\pounds\mathfrak{h}^{(mn)}=\vartheta^{(m)}$ for
all integers $m<n$.

Let $A$ be a closed subset of $X$. Then it is clear that
$(\phi_{i}\circ\xi_{i}^{(n)})^{-1}(A)$ is closed in $X_{i}^{(n)}$,
since $\phi_{i}$ and $\xi_{i}^{(n)}$ are continuous maps. Now, we
have
$(\vartheta^{(n)})^{-1}(A)=\phi_{i}^{(n)}((\phi_{i}\circ\xi_{i}^{(n)})^{-1}(A))$.
Then, since $\phi_{i}^{(n)}$ is an imbedding (and so a closed map),
it follows that $(\vartheta^{(n)})^{-1}(A)$ is a closed subset of
$X^{(n)}$. Therefore, $\vartheta^{(n)}$ is a continuous.

Now, it is not difficult to prove that $\{X,\vartheta^{(n)}\}_{n}$
satisfies the universal mapping problem (see \cite{Hotman}). This
concludes the proof.
\end{proof}

\section{Inductive closed injective
systems}\label{Section.Inductive.System}

In this section, we will study a particular kind of closed injective
systems, which has some interesting properties. More specifically,
we study the CIS's of the form $\{X_{i},X_{i},f_{i}\}$, which are
called {\bf inductive closed injective system}, or an inductive CIS,
to shorten.

In an inductive CIS $\SIFI$, any two injections $f_{i}$ and $f_{j}$,
with $i<j$, are {\bf componible}, that is, the composition
$f_{i,j}=f_{j}\circ\cdots\circ f_{i}$ is always defined throughout
domine $X_{i}$ of $f_{i}$.

Hence, fixing  $i\in\N$, for each $j>i$ we have a closed injection
$f_{i,j}:X_{i}\rightarrow X_{j+1}$. Because this, we define, for
each $i<j\in\N$,
$$f_{i}^{i}=id_{X_{i}}:X_{i}\rightarrow X_{i}$$
$$f_{i}^{j}=f_{i,j-1}:X_{i}\rightarrow  X_{j}.$$

By this definition, it follows that $f_{i}^{k}=f_{j}^{k}\circ
f_{i}^{j}$, for all $i\leq j\leq k$. Therefore,
$\{X_{i},f_{i}^{j}\}$ is an {\bf inductive system} on the category
$\mathfrak{Top}$.

We will construct a direct limit for this inductive system.

Let $\coprod X_{i}=\coprod_{i=0}^{\infty}X_{i}$ be the coproduct (or
topological sum) of the spaces $X_{i}$.

Consider the canonical inclusions
$\varphi_{i}:X_{i}\rightarrow\coprod X_{i}$. It is obvious that each
$\varphi_{i}$ is a homeomorphism onto its image.

Over the space $\coprod X_{i}$, consider the relation $\sim$ defined
by:
$$x\sim y\Leftrightarrow \left\{\begin{array}{l} \exists \
x_{i}\in X_{i}, y_{j}\in X_{j} \ {\rm with} \ x=\varphi_{i}(x_{i}) \
{\rm e} \ y=\varphi_{j}(y_{j}), \ {\rm such \ that} \\
y_{j}=f_{i}^{j}(x_{i}) \ {\rm if} \ i\leq j \ {\rm and} \
x_{i}=f_{j}^{i}(y_{j}) \ {\rm if} \ j<i. \\ \end{array} \right..$$

\begin{lem}\label{Lema.Relação.equivalencia}
The relation $\sim$ is an equivalence relation over $\coprod X_{i}$.
\end{lem}
\begin{proof}
We need to check the veracity of the properties reflexive, symmetric and transitive.

{\it Reflexive}: Let $x\in X$ be a point. There is $x_{i}\in X_{i}$
such that $x=\psi_{i}(x_{i})$, for some $i\in\N$. We have
$x_{i}=f_{i}^{i}(x_{i})$. Therefore $x\sim x$.

{\it Symmetric}: It is obvious by definition of the relation $\sim$.

{\it Transitive}: Assume that $x\sim y$ and $y\sim z$. Suppose that
$x=\varphi_{i}(x_{i})$ and $y=\varphi_{j}(y_{j})$ with
$y_{j}=f_{i}^{j}(x_{i})$. In this case, $i\leq j$. (The other case
is analogous and is omitted). Since $y\sim z$, we can have:

{\it Case 1 }: $y=\varphi_{j}(y_{j}')$ and $z=\varphi_{k}(z_{k})$
with $j\leq k$ and $z_{k}=f_{j}^{k}(y_{j}')$. Then
$\varphi_{j}(y_{j})=y=\varphi_{j}(y_{j}')$, and so $y_{j}=y_{j}'$.
Since $i\leq j\leq k$, we have
$z_{k}=f_{j}^{k}(y_{j})=f_{j}^{k}f_{i}^{j}(x_{i})=f_{i}^{k}(x_{i})$.
Therefore $x\sim z$.

{\it Case 2}: $y=\varphi_{j}(y_{j}')$ and $z=\varphi_{k}(z_{k})$
with $k<j$ and $y_{j}'=f_{k}^{j}(z_{k})$. Then $y_{j}=y_{j}'$, as
before. Now, we have again two possibility:

($a$) If $i\leq k<j$, then we have
$f_{k}^{j}(z_{k})=y_{j}=f_{i}^{j}(x_{i})=f_{k}^{j}f_{i}^{k}(x_{i})$.
Thus $z_{k}=f_{i}^{k}(x_{i})$ and $x\sim z$.

($b$) If $k< i \leq j$, then we have
$f_{i}^{j}(x_{i})=y_{j}=f_{k}^{j}(z_{k})=f_{i}^{j}f_{k}^{i}(z_{k})$.
Thus $x_{i}=f_{k}^{i}(z_{k})$ and $x\sim z$.
\end{proof}

Let $\widetilde{X}=(\coprod X_{i}) / \sim$ be the quotient space
obtained of $\coprod X_{i}$ by the equivalence relation $\sim$, and
for each $i\in\N$, let
$\widetilde{\varphi_{i}}:X_{i}\rightarrow\widetilde{X}$ be the
composition $\widetilde{\varphi_{i}}=\rho\circ\varphi_{i}$, where
$\rho:\coprod X_{i}\rightarrow\widetilde{X}$ is the quotient
projection.
$$ \xymatrix{\widetilde{\varphi_{i}}: X_{i} \ \ar[r]^{\varphi_{i}} &
\coprod X_{i} \ar[r]^{\rho} & \widetilde{X}}$$

Note that, since $\widetilde{X}$ has the quotient topology induced
by projection $\rho$, a subset $A\subset\widetilde{X}$ is closed in
$\widetilde{X}$ if and only if $\widetilde{\varphi_{i}}^{-1}(A)$ is
close in $X_{i}$, for each $i\in\N$.

Given $x,y\in\coprod X_{i}$ with $x,y\in X_{i}$, then $x\sim
y\Leftrightarrow x=y$. Thus, each $\widetilde{\varphi_{i}}$ is
one-to-one fashion onto $\widetilde{\varphi_{i}}(X_{i})$. Moreover,
it is obvious that
$\widetilde{X}=\cup_{i=0}^{\infty}\widetilde{\varphi_{i}}(X_{i})$.

These observations suffice to conclude the following:

\begin{thm} \label{EL==LD}
$\{\widetilde{X},\widetilde{\varphi_{i}}\}$ is a fundamental limit
space for the inductive CIS $\SIFI$. Moreover,
$\{\widetilde{X},\widetilde{\varphi_{i}}\}$ is a direct limit for
the inductive system $\{X_{i},f_{i}^{j}\}$.
\end{thm}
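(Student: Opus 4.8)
The plan is to prove the statement in two halves, exactly as it is phrased: first that $\{\widetilde{X},\widetilde{\varphi_{i}}\}$ is a fundamental limit space for the inductive CIS $\SIFI$, and then that the \emph{same} data constitute a direct limit for the inductive system $\{X_{i},f_{i}^{j}\}$ in $\mathfrak{Top}$. The observations immediately preceding the statement already hand me most of the ingredients for the first half, so I would assemble them rather than re-derive them. Specifically, the weak-topology characterization (a set $A\subset\widetilde{X}$ is closed iff each $\widetilde{\varphi_{i}}^{-1}(A)$ is closed) is exactly the defining property of a fundamental limit space, and the remarks that each $\widetilde{\varphi_{i}}$ is injective and that $\widetilde{X}=\cup_{i=0}^{\infty}\widetilde{\varphi_{i}}(X_{i})$ give conditions L.1 and most of L.2.

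For the first half, the steps I would carry out are: (i) verify L.1, which is the stated union decomposition; (ii) verify L.2, that each $\widetilde{\varphi_{i}}$ is an imbedding — injectivity is already noted, and continuity is automatic from $\widetilde{\varphi_{i}}=\rho\circ\varphi_{i}$, so the only real content is that $\widetilde{\varphi_{i}}$ is a homeomorphism onto its image, which follows because the weak topology makes $\widetilde{\varphi_{i}}$ a closed map onto $\widetilde{\varphi_{i}}(X_{i})$; (iii) invoke Proposition \ref{Proposicao.Condicoes.Para.E.L.F.} and verify L.4, L.5, L.6 in place of L.3. Here L.4 is vacuous, since in an inductive CIS every pair $f_{i},f_{j}$ is componible (as the text explicitly observes). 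For L.5 and L.6 I would read off the equivalence relation $\sim$: two points $\widetilde{\varphi_{i}}(x_{i})$ and $\widetilde{\varphi_{j}}(x_{j})$ with $i<j$ coincide iff $x_{j}=f_{i}^{j}(x_{i})=f_{i,j-1}(x_{i})$, which is precisely the pointwise content encoded by L.5, while L.6 expresses that no other identifications occur — both are direct translations of the definition of $\sim$. Then the weak topology is the defining condition making this limit space \emph{fundamental}.

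For the second half I must exhibit $\{\widetilde{X},\widetilde{\varphi_{i}}\}$ as a direct limit of $\{X_{i},f_{i}^{j}\}$. The compatibility $\widetilde{\varphi_{i}}=\widetilde{\varphi_{j}}\circ f_{i}^{j}$ for $i\leq j$ follows immediately from the definition of $\sim$ (an element of $X_i$ is identified with its image under $f_i^j$). The substantive point is the universal property: given any space $T$ and continuous maps $g_{i}:X_{i}\rightarrow T$ with $g_{j}\circ f_{i}^{j}=g_{i}$, I must produce a unique continuous $u:\widetilde{X}\rightarrow T$ with $u\circ\widetilde{\varphi_{i}}=g_{i}$. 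The maps $g_{i}$ assemble into a single map $\coprod X_{i}\rightarrow T$ which respects $\sim$ by the compatibility hypothesis, hence factors through $\rho$ to give $u$; continuity of $u$ is exactly the universal property of the quotient topology on $\widetilde{X}$, and uniqueness follows because the $\widetilde{\varphi_{i}}(X_{i})$ cover $\widetilde{X}$.

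I expect the main obstacle to be purely organizational rather than mathematical: keeping the two notions of ``limit'' cleanly separated, since the weak topology (a property about \emph{closed} sets) is what makes the space fundamental, whereas the universal mapping property (a property about \emph{continuous maps out}) is what makes it a categorical direct limit, and the two use the quotient topology in opposite directions. The one place demanding genuine care is confirming that $\widetilde{\varphi_{i}}$ is an imbedding and not merely a continuous injection; for this I would argue that if $C\subset X_{i}$ is closed then $\widetilde{\varphi_{i}}(C)$ is closed in $\widetilde{\varphi_{i}}(X_{i})$, using the weak-topology criterion applied to the preimages $\widetilde{\varphi_{j}}^{-1}(\widetilde{\varphi_{i}}(C))$, which by the componibility structure are expressible via the closed maps $f_{i}^{j}$ and $(f_{j}^{i})^{-1}$ and are therefore closed — this is the analogue of Proposition \ref{Prop.Implica.Fechado} in the inductive setting.
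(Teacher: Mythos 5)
Your proposal is correct and follows essentially the same route as the paper: the paper itself gives no formal proof, merely asserting after the quotient construction that ``these observations suffice,'' and your verification of L.1, L.2 (via closedness of $\widetilde{\varphi_{j}}^{-1}(\widetilde{\varphi_{i}}(C))$ through the maps $f_{i}^{j}$ and $(f_{j}^{i})^{-1}$), the vacuity of L.4 and L.6, L.5 from the definition of $\sim$, and the universal mapping property through the quotient topology is exactly the content the paper leaves implicit. Your filled-in details are sound, so this is the paper's argument made complete rather than a different one.
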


For details on direct limit see \cite{Hotman}.

\begin{Rem} \label{Observacao.X.tilde}
If we consider an arbitrary CIS $\SIF$, then the relation $\sim$ is
again an equivalence relation over the coproduct $\coprod X_{i}$.
Moreover, in this circumstances, if $\varphi_{i}(x_{i})=x\sim
y=\varphi_{j}(y_{j})$, then we must have:
\begin{enumerate}
\item[{\rm (a)}] If $i=j$, then $x=y$.

\item[{\rm (b)}] If $i<j$, then $f_{i}$ and $f_{j-1}$ are semicomponible and
$x_{i}\in Y_{i,j-1}$;

\item[{\rm (c)}] If $i>j$, then $f_{j}$ and $f_{i-1}$ are semicomponible and
$y_{j}\in Y_{j,i-1}$.
\end{enumerate}

\noindent Therefore, it follows that the space
$\widetilde{X}=(\coprod X_{i})/\sim$ is exactly the attaching space
$X_{0}\cup_{f_{0}}X_{1}\cup_{f_{1}}X_{2}\cup_{f_{2}}\cdots$, and the
maps $\widetilde{\varphi}_{i}$ are the projections of $X_{i}$ into
this space, as in theorem of the existence of fundamental limit
space (Theorem \ref{Teo.Existencia}).
\end{Rem}

\section{Functoriality on fundamental limit spaces}\label{Section.Functor} 

Let $\funtor:\mathfrak{Top}\rightarrow \mathfrak{Mod}$ be a functor
of the category $\mathfrak{Top}$ into the category $\mathfrak{Mod}$
(the category of the $R$-modules and $R$-homomorphisms), where $R$
is a commutative ring with identity element. (See \cite{Hotman}).

Let $\SIFI$ be an arbitrary inductive CIS, and consider the
inductive system $\{X_{i},f_{i}^{j}\}$ constructed in the previous
section. The functor $\funtor$ turns this system into the inductive
system $\{\funtor X_{i},\funtor f_{i}^{j}\}$ on the category
$\mathfrak{Mod}$.

\begin{thm}\label{Teorema.Invariancia.Funtorusal} {\sc (of the Functorial Invariance)}
Let $\{X,\phi_{i}\}$ be a fundamental limit space for the inductive
CIS $\SIFI$ and let $\{M,\psi_{i}\}$ be a direct limit for
$\{\funtor X_{i},\funtor f_{i}^{j}\}$. Then, there is a unique
$R$-isomorphism $h:\funtor X\rightarrow M$ such that
$\psi_{i}=h\circ\funtor\phi_{i}$, for all $i\in\N$.
\end{thm}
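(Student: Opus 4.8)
The plan is to reduce everything to the uniqueness of direct limits in $\mathfrak{Mod}$, exploiting the identification established in the previous section. By Theorem \ref{EL==LD}, the fundamental limit space $\{X,\phi_{i}\}$ of the inductive CIS $\SIFI$ is simultaneously a direct limit of the inductive system $\{X_{i},f_{i}^{j}\}$ in $\mathfrak{Top}$; in particular its structure maps satisfy $\phi_{i}=\phi_{j}\circ f_{i}^{j}$ whenever $i\leq j$. The first step is to push this cocone through $\funtor$: functoriality gives $\funtor\phi_{i}=\funtor\phi_{j}\circ\funtor f_{i}^{j}$ for all $i\leq j$, so that $\{\funtor X,\funtor\phi_{i}\}$ is a compatible family of $R$-homomorphisms over the inductive system $\{\funtor X_{i},\funtor f_{i}^{j}\}$, i.e.\ a cocone in $\mathfrak{Mod}$.

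The heart of the proof, and the step I expect to be the main obstacle, is to show that this cocone $\{\funtor X,\funtor\phi_{i}\}$ is not merely a cocone but an actual direct limit of $\{\funtor X_{i},\funtor f_{i}^{j}\}$ in $\mathfrak{Mod}$; equivalently, that $\funtor$ carries the direct limit $\{X,\phi_{i}\}$ to a direct limit. This is precisely a continuity requirement on $\funtor$ with respect to direct limits of the type produced by inductive CIS's, since an arbitrary functor need not preserve colimits. I would verify the universal property directly: given any $R$-module $N$ and any compatible family $\beta_{i}\colon\funtor X_{i}\to N$ with $\beta_{i}=\beta_{j}\circ\funtor f_{i}^{j}$, one must produce a unique $R$-homomorphism $u\colon\funtor X\to N$ with $\beta_{i}=u\circ\funtor\phi_{i}$. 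The existence of $u$ is where one uses that $X$ carries the weak topology and is the set-theoretic union of the $\phi_{i}(X_{i})$ (Theorem \ref{Teo.Existencia} together with Theorem \ref{EL==LD}), so that $\funtor X$ is genuinely assembled from the $\funtor X_{i}$; for the functors of interest, such as singular homology, which satisfies the direct-limit axiom over systems of closed inclusions with weak-topology colimit, this preservation holds, and it is exactly what turns the statement into a genuine invariance theorem.

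Granting that $\{\funtor X,\funtor\phi_{i}\}$ is a direct limit, the conclusion becomes purely formal. Both $\{\funtor X,\funtor\phi_{i}\}$ and $\{M,\psi_{i}\}$ are direct limits of one and the same inductive system $\{\funtor X_{i},\funtor f_{i}^{j}\}$ in $\mathfrak{Mod}$. The universal property applied in both directions yields $R$-homomorphisms $h\colon\funtor X\to M$ with $\psi_{i}=h\circ\funtor\phi_{i}$ and $h'\colon M\to\funtor X$ with $\funtor\phi_{i}=h'\circ\psi_{i}$; composing, $h'\circ h$ and $h\circ h'$ are endomorphisms of direct limits fixing every structure map, hence equal to the respective identities by the uniqueness clause of the universal property. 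Therefore $h$ is the desired $R$-isomorphism, and the same uniqueness clause shows that $h$ is the only $R$-homomorphism satisfying $\psi_{i}=h\circ\funtor\phi_{i}$ for all $i\in\N$. This last bookkeeping mirrors exactly the uniqueness argument already used for Theorems \ref{Teo-Bijecao} and \ref{Unicidade}.
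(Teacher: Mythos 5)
You have reproduced, in essence, the paper's own argument. The paper routes through the concrete model $\widetilde{X}$: it takes the unique homeomorphism $\beta\colon X\rightarrow\widetilde{X}$ furnished by Theorems \ref{EL==LD} and \ref{Unicidade}, applies $\funtor$ to get the $R$-isomorphism $\funtor\beta$, asserts that $\{\funtor\widetilde{X},\funtor\widetilde{\varphi_{i}}\}$ is a direct limit of $\{\funtor X_{i},\funtor f_{i}^{j}\}$ in $\mathfrak{Mod}$, and composes $\funtor\beta$ with the map $\omega$ given by the universal property; you instead note that $\{X,\phi_{i}\}$ is itself a direct limit in $\mathfrak{Top}$ and compare two direct limits of the same system in $\mathfrak{Mod}$. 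That difference is purely cosmetic. The substantive point is the step you single out as the main obstacle: that $\funtor$ carries the direct limit $\{X,\phi_{i}\}$ (equivalently $\{\widetilde{X},\widetilde{\varphi_{i}}\}$) to a direct limit in $\mathfrak{Mod}$. You are right that this is a genuine continuity hypothesis on $\funtor$, not a formal consequence of functoriality: the paper's proof asserts it without argument (``it follows that\ldots''), and the assertion is false for an arbitrary functor $\funtor\colon\mathfrak{Top}\rightarrow\mathfrak{Mod}$, so the theorem as stated implicitly needs such a preservation hypothesis (satisfied, for instance, by singular homology on these weak-topology unions along closed inclusions). So your proposal is exactly as complete as the paper's proof, and has the merit of making explicit the unproved assumption on which both arguments rest; the remaining content --- the cocone verification and the two-way universal-property argument giving existence, uniqueness, and invertibility of $h$ --- is correct and matches the paper's use of the universal property.
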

\begin{proof} By the Theorem \ref{EL==LD} and by uniqueness of fundamental limit space,
there is a unique homeomorphism $\beta:X\rightarrow\widetilde{X}$
such that $\widetilde{\varphi_{i}}=\beta\circ\phi_{i}$, for all
$i\in\N$. Hence, $\funtor \beta:\funtor
X\rightarrow\funtor\widetilde{X}$ is the unique $R$-isomorphism such
that
$\funtor\widetilde{\varphi_{i}}=\funtor\beta\circ\funtor\phi_{i}$.

Since $\{\widetilde{X},\widetilde{\varphi_{i}}\}$ is a direct limit
for the inductive system $\{X_{i},f_{i}^{j}\}$ on the category
$\mathfrak{Top}$, it follows that
$\{\funtor\widetilde{X},\funtor\varphi_{i}\}$ is a direct limit of
the system $\{\funtor X_{i},\funtor f_{i}^{j}\}$ on the category
$\mathfrak{Mod}$. By universal property of direct limit, there is a
unique $R$-isomorphism $\omega:\funtor\widetilde{X}\rightarrow M$
such that $\psi_{i}=\omega\circ\funtor\widetilde{\varphi_{i}}$.

Then, we take $h:\funtor X\rightarrow M$ to be the composition
$h=\omega\circ\funtor\beta$.
\end{proof}

The universal property of direct limits among others properties can
be found, for example, in Chapter 2 of \cite{Hotman}.

Now, we describe some basic applications of the Theorem of the
Functorial Invariance.

\begin{Exmp}\label{Exemplo.Funtor.CW-complexo}
Let $K$ be an arbitrary CW-complex and let $\{K^n,K^n,l_{n}\}$ be
the CIS as in Example \ref{Exemplo.CW-complex}. It is clear that
this CIS is an inductive CIS. Let
$\funtor:\mathfrak{Top}\rightarrow\mathfrak{Mod}$ be an arbitrary
functor. Given $m<n$ in $\N$, write $l_{m}^n$ to denote the
composition $l_{n-1}\circ\cdots\circ l_{m}:K^m\rightarrow K^{n}$.
Then, $\{\funtor K^n,\funtor l_{m}^n\}$ is an inductive system on
the category $\mathfrak{Mod}$. By Theorem
\ref{Teorema.Invariancia.Funtorusal}, its direct limit is isomorphic
to $\funtor K$.

\end{Exmp}

\begin{Exmp}\label{Exemplo.Homology.S.infinito}
Homology of the sphere $S^{\infty}$.

Let $\{S^{n},S^{n},f_{n}\}$ be the CIS of Example
\ref{Exemplo.S.infinito}. Its fundamental limit space is the
infinite-dimensional sphere $S^{\infty}$. Let $p>0$ be an arbitrary
integer. By previous example, $H_{p}(S^{\infty})$ is isomorphic to
direct limit of inductive system $\{H_{p}(S^n),H_{p}(f_{m}^n)\}$,
where $f_{m}^n=f_{n-1}\circ\cdots\circ f_{m}:S^m\rightarrow S^n$,
for $m\leq n$. Now, since $H_{p}(S^n)=0$ for $n>p$, it follows that
$H_{p}(S^{\infty})=0$, for all $p>0$.

\end{Exmp}

Details on homology theory can be found in \cite{Greenberg},
\cite{Hatcher} and \cite{Spanier}.

\begin{Exmp}
The infinite projective space $\RP^{\infty}$ is a $K(\Z_{2},1)$
space.

We know that $\pi_{1}(\RP^n)\approx\Z_{2}$ for all $n\geq2$ and
$\pi_{1}(\RP^1)\approx\Z$. Moreover, for integers $m<n$, the natural
inclusion $f_{m}^n:\RP^m\hookrightarrow\RP^n$ induces a isomorphism
$(f_{m}^n)_{\#}:\pi_{1}(\RP^m)\approx\pi_{1}(\RP^n)$. For details
see [2].

The fundamental limit space for the CIS $\{\RP^{n},\RP^{n},f_{n}\}$
of the Example \ref{Exemplo.RP.infinito} is the infinite projective
space $\RP^{\infty}$. By Example \ref{Exemplo.Funtor.CW-complexo},
we have that $\pi_{1}(\RP^{\infty})$ is isomorphic to direct limit
for the inductive system $\{\pi_{1}(\RP^{n}),(f_{m}^n)_{\#}\}$.
Then, by previous arguments it is easy to check that
$\pi_{1}(\RP^{\infty})\approx\Z_{2}$.

On the other hand, for all $r>1$, we have
$\pi_{r}(\RP^{n})\approx\pi_{r}(S^n)$ for all $n\in\N$ (see [2]).
Then, $\pi_{r}(S^n)=0$ always that $1<r<n$. Thus, it is easy to
check that $\pi_{r}(\RP^{\infty})=0$, for all $r>1$.

\end{Exmp}

For details on homotopy theory and $K(\pi,1)$ spaces see
\cite{Hatcher} and \cite{Whitehead}.

\begin{Exmp}
The homotopy groups of $S^{\infty}$.

Since $\pi_{r}(S^n)=0$ for all integers $r<n$, it is very easy to
prove that $\pi_{r}(S^{\infty})=0$, for all $r\geq1$.

\end{Exmp}

\begin{Exmp}\label{Exemplo.Homlogia.Toro.infinito}
The homology of the torus $T^{\infty}$.

Some arguments very simple and similar to above can be used to prove
that $H_{0}(T^{\infty})\approx R$ and
$H_{p}(T^{\infty})\approx\bigoplus_{i=1}^{\infty}R$, for all integer
$p>0$.

\end{Exmp}

\section{Counter-Funtoriality on fundamental limit spaces}\label{Section.Counter-Functor} 

Let $\contrafuntor:\mathfrak{Top}\rightarrow\mathfrak{Mod}$ be a
counter-functor from the category $\mathfrak{Top}$ into the category
$\mathfrak{Mod}$, where $R$ is a commutative ring with identity
element.

Let $\SIFI$ be an arbitrary inductive CIS and consider the inductive
system $\{X_{i},f_{i}^{j}\}$ as before. The counter-functor
$\contrafuntor$ turns this inductive system on the category
$\mathfrak{Top}$ into the reverse system $\{\contrafuntor
X_{i},\contrafuntor f_{i}^{j}\}$ on the category $\mathfrak{Mod}$.

\begin{thm}\label{Teorema.Incariancia.Contrafuntorusal} {\sc (of the Counter-Functorial Invariance)}
Let $\{X,\phi_{i}\}$ be a fundamental limit space for the inductive
CIS $\SIFI$ and let $\{M,\psi_{i}\}$ be an inverse limit for
$\{\contrafuntor X_{i},\contrafuntor f_{i}^{j}\}$. Then there is a
unique $R$-isomorphism $h:M\rightarrow\contrafuntor X$ such that
$\psi_{i}=\contrafuntor\phi_{i}\circ h$, for all $i\in\N$.
\end{thm}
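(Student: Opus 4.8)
The plan is to mirror the proof of Theorem \ref{Teorema.Invariancia.Funtorusal}, dualizing every arrow because $\contrafuntor$ is contravariant. First I would invoke Theorem \ref{EL==LD} together with the uniqueness of the fundamental limit space (Theorem \ref{Unicidade}) to obtain the unique homeomorphism $\beta:X\rightarrow\widetilde{X}$ satisfying $\widetilde{\varphi_i}=\beta\circ\phi_i$ for all $i$, where $\{\widetilde{X},\widetilde{\varphi_i}\}$ is the concrete fundamental limit space built from the quotient of $\coprod X_i$. Applying the counter-functor $\contrafuntor$ to this homeomorphism yields an $R$-isomorphism $\contrafuntor\beta:\contrafuntor\widetilde{X}\rightarrow\contrafuntor X$, and contravariance turns $\widetilde{\varphi_i}=\beta\circ\phi_i$ into $\contrafuntor\widetilde{\varphi_i}=\contrafuntor\phi_i\circ\contrafuntor\beta$.

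Next I would use the second assertion of Theorem \ref{EL==LD}, that $\{\widetilde{X},\widetilde{\varphi_i}\}$ is a direct limit for the inductive system $\{X_i,f_i^j\}$ on $\mathfrak{Top}$. Since $\contrafuntor$ is a counter-functor, it should carry this direct limit into an inverse limit; that is, $\{\contrafuntor\widetilde{X},\contrafuntor\widetilde{\varphi_i}\}$ is an inverse limit for the reverse system $\{\contrafuntor X_i,\contrafuntor f_i^j\}$ on $\mathfrak{Mod}$. Having two inverse limits $\{M,\psi_i\}$ and $\{\contrafuntor\widetilde{X},\contrafuntor\widetilde{\varphi_i}\}$ of the same reverse system, the universal mapping property of the inverse limit produces a unique $R$-isomorphism $\omega:M\rightarrow\contrafuntor\widetilde{X}$ with $\contrafuntor\widetilde{\varphi_i}\circ\omega=\psi_i$ for every $i$.

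Finally I would set $h=\contrafuntor\beta\circ\omega:M\rightarrow\contrafuntor X$, a composite of $R$-isomorphisms, hence an $R$-isomorphism. The required identity is checked directly: for each $i$,
\[
\contrafuntor\phi_i\circ h=\contrafuntor\phi_i\circ\contrafuntor\beta\circ\omega=\contrafuntor\widetilde{\varphi_i}\circ\omega=\psi_i,
\]
using $\contrafuntor\phi_i\circ\contrafuntor\beta=\contrafuntor(\beta\circ\phi_i)=\contrafuntor\widetilde{\varphi_i}$. Uniqueness of $h$ follows from the uniqueness of $\omega$ (universal property of the inverse limit) together with the uniqueness of $\beta$ and the invertibility of $\contrafuntor\beta$.

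The step I expect to be the main obstacle is the claim that $\contrafuntor$ sends the specific direct limit $\{\widetilde{X},\widetilde{\varphi_i}\}$ to an inverse limit of $\{\contrafuntor X_i,\contrafuntor f_i^j\}$; this is the exact dual of the (tacit) hypothesis used in the covariant Theorem \ref{Teorema.Invariancia.Funtorusal} that $\funtor$ preserves direct limits. A general contravariant functor need not convert colimits into limits, so this must either be incorporated into the standing assumptions on $\contrafuntor$ or justified for the cohomology-type counter-functors to which the theorem is applied; once it is granted, everything else is formal diagram-chasing with the orientations reversed.
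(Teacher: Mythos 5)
Your proposal reproduces the paper's own proof essentially verbatim: the same homeomorphism $\beta:X\rightarrow\widetilde{X}$ from Theorem \ref{EL==LD} and uniqueness of the fundamental limit space, the same $R$-isomorphism $\omega:M\rightarrow\contrafuntor\widetilde{X}$ from the universal property of the inverse limit, and the same composite $h=\contrafuntor\beta\circ\omega$. The obstacle you flag at the end --- that a general counter-functor need not carry the direct limit $\{\widetilde{X},\widetilde{\varphi_{i}}\}$ to an inverse limit of $\{\contrafuntor X_{i},\contrafuntor f_{i}^{j}\}$ --- is indeed passed over silently in the paper as well (it asserts ``it follows that'' with no justification), so your version is, if anything, more honest about the tacit hypothesis on $\contrafuntor$ needed for the argument to go through.
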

\begin{proof} By the Theorem \ref{EL==LD} and by uniqueness of fundamental limit space,
there is a unique homeomorohism $\beta:X\rightarrow\widetilde{X}$
such that $\widetilde{\varphi_{i}}=\beta\circ\phi_{i}$, for all
$i\in\N$. Hence, $\contrafuntor
\beta:\contrafuntor\widetilde{X}\rightarrow\contrafuntor X$ is the
unique $R$-isomorphism such that
$\contrafuntor\widetilde{\varphi_{i}}=\contrafuntor\phi_{i}\circ\contrafuntor\beta$.

Since $\{\widetilde{X},\widetilde{\varphi_{i}}\}$ is a direct limit
for the inductive system $\{X_{i},f_{i}^{j}\}$ on the category
$\mathfrak{Top}$, it follows that
$\{\contrafuntor\widetilde{X},\contrafuntor\varphi_{i}\}$ is an
inverse limit for the inverse system $\{\contrafuntor
X_{i},\contrafuntor f_{i}^{j}\}$ on the category $\mathfrak{Mod}$.
By universal property of inverse limit, there is a unique
$R$-isomorphism $\omega:M\rightarrow\contrafuntor\widetilde{X}$ such
that $\psi_{i}=\contrafuntor\widetilde{\varphi_{i}}\circ\omega$.

Then, we take $h:M\rightarrow\contrafuntor X$ to be the compost
$R$-isomorphism $h=\contrafuntor\beta\circ\omega$.
\end{proof}

The property of the inverse limit can be found in \cite{Hotman}.

Now, we describe some basic applications of the Theorem of the
Counter-Functorial Invariance.

\begin{Exmp}
Cohomology of the sphere $S^{\infty}$.

Since $H^p(S^n;R)\approx H_{p}(S^n;R)$ for all $p,n\in\Z$, it
follows by the Theorem \ref{Teorema.Incariancia.Contrafuntorusal}
and Example \ref{Exemplo.Homology.S.infinito} that
$H^{0}(S^{\infty};R)\approx R$ and $H^{p}(S^{\infty};R)=0$, for all
$p>0$.

\end{Exmp}

\begin{Exmp}
The cohomology of the torus $T^{\infty}$.

Since the homology and cohomology modules of a finite product of
spheres are isomorphic, it follows by Theorem the
\ref{Teorema.Incariancia.Contrafuntorusal} and Example
\ref{Exemplo.Homlogia.Toro.infinito} that $H^{0}(T^{\infty})\approx
R$ and $H^{p}(T^{\infty})\approx\bigoplus_{i=1}^{\infty}R$, for all
$p>0$.

\end{Exmp}

\section{Finitely semicomponible and stationary
CIS's}\label{Section.Stationary}

\hspace{5mm} We say that a CIS $\SIF$ is {\bf finitely
semicomponible} if, for all $i\in\N$, there is only a finite number
of indices $j\in\N$ such that $f_{i}$ and $f_{j}$ (or $f_{j}$ and
$f_{i}$) are semicomponible, that is, there is not an infinity
sequence $\{f_{k}\}_{k\geq i_{0}}$ of semicomponible maps.
Obviously, $\SIF$ is finitely semicomponible if and only if for some
(so for all) limit space $\{X,\phi_{i}\}$ for $\SIF$, the collection
$\{\phi_{i}(X_{i})\}_{i}$ is a pointwise finite cover of $X$ (that
is, each point of $X$ belongs to only a finite number of
$\phi_{i}(X_{i})'s$).

We say that a CIS $\SIF$ is {\bf stationary} if there is a
nonegative integer $n_{0}$ such that, for all $n\geq n_{0}$, we have
$Y_{n}=Y_{n_{0}}=X_{n_{0}}=X_{n}$ and $f_{n}=identity \ map$.

This section of text is devoted to the study and characterization of
the limit space of these two special types of CIS's.

\begin{thm}\label{Teorema.Semicomponivel.Fundamental}
Let $\{X,\phi_{i}\}$ be an arbitrary limit space for the CIS $\SIF$.
If the collection $\{\phi_{i}(X_{i})\}_{i}$ is a locally finite
cover of $X$, then $\SIF$ is finitely semicomponible. The reciprocal
is true if $\{X,\phi_{i}\}$ is a fundamental limit space.
\end{thm}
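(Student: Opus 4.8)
The plan is to treat the two implications separately; the forward direction is essentially immediate, while the reciprocal carries the real content.

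For the forward direction I would start from the remark that a locally finite cover is automatically pointwise finite: if $x\in X$ has a neighborhood $U$ meeting only finitely many of the sets $\phi_i(X_i)$, then in particular $x$ itself lies in only finitely many of them. The discussion preceding the statement records that $\SIF$ is finitely semicomponible precisely when $\{\phi_i(X_i)\}_i$ is a pointwise finite cover for some (equivalently, any) limit space. Invoking that equivalence at once yields that $\SIF$ is finitely semicomponible, with no use of the weak topology.

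For the reciprocal, assume $\SIF$ is finitely semicomponible and that $\{X,\phi_i\}$ is a fundamental limit space; I must produce, for each $x\in X$, a neighborhood meeting only finitely many $\phi_i(X_i)$. Fixing $x$, I would set
$$A=\bigcup\{\phi_i(X_i):x\notin\phi_i(X_i)\},$$
and aim to show that $A$ is closed and that $x\notin A$, so that $X\setminus A$ is the required neighborhood. That $x\notin A$ is formal, since every set in the union avoids $x$. To see that $A$ is closed I would exploit the weak topology, checking that $\phi_j^{-1}(A)$ is closed in $X_j$ for every $j\in\N$. Here
$$\phi_j^{-1}(A)=\bigcup_i\phi_j^{-1}(\phi_i(X_i)),$$
and by the formula established in Proposition \ref{Prop.Implica.Fechado} each term $\phi_j^{-1}(\phi_i(X_i))$ is closed in $X_j$, being empty unless $i=j$ or $f_i$ and $f_j$ are semicomponible.

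The key point, and the step I expect to be the heart of the argument, is that for a fixed $j$ only finitely many indices $i$ contribute a nonempty term to this union: this is exactly the finite-semicomponibility hypothesis, which guarantees that only finitely many $f_i$ are semicomponible with $f_j$. Hence $\phi_j^{-1}(A)$ is a finite union of closed sets, so it is closed, and the weak topology then forces $A$ to be closed in $X$. Finally I would read off local finiteness: the open set $U=X\setminus A$ contains $x$, and for every index $i$ with $x\notin\phi_i(X_i)$ one has $\phi_i(X_i)\subset A$, whence $U\cap\phi_i(X_i)=\emptyset$; thus $U$ can meet only those $\phi_i(X_i)$ that contain $x$. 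Since $\SIF$ is finitely semicomponible the cover is pointwise finite, so there are only finitely many such indices, and as $x$ was arbitrary the collection $\{\phi_i(X_i)\}_i$ is locally finite. The only delicate ingredient is the interplay between the weak topology and finite semicomponibility in the closedness of $A$; everything else is bookkeeping.
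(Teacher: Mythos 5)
Your proof is correct, and it takes a genuinely different route from the paper's in the substantive (reciprocal) direction; the forward direction is identical in both (locally finite implies pointwise finite, then invoke the stated equivalence between finite semicomponibility and pointwise finiteness of the cover). For the reciprocal, the paper argues bottom-up: it fixes $x$, uses that the indices $i$ with $x\in\phi_{i}(X_{i})$ form a finite consecutive block $n_{0}\leq i\leq n_{1}$, picks open sets $V_{n_{0}}\subset X_{n_{0}}$ and $V_{n_{1}}\subset X_{n_{1}}$ avoiding the closed sets $f_{n_{0}-1}(Y_{n_{0}-1})$ and $Y_{n_{1}}$ respectively, and then verifies by a case-by-case computation of $\phi_{j}^{-1}(X-V)$ that the explicit set $V=\phi_{n_{0}}(V_{n_{0}})\cup\phi_{n_{0}+1}(X_{n_{0}+1})\cup\cdots\cup\phi_{n_{1}-1}(X_{n_{1}-1})\cup\phi_{n_{1}}(V_{n_{1}})$ is an open neighborhood meeting only finitely many $\phi_{i}(X_{i})$. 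You instead work top-down: you form $A=\bigcup\{\phi_{i}(X_{i}):x\notin\phi_{i}(X_{i})\}$ and certify that $A$ is closed via the weak topology, the formula for $\phi_{j}^{-1}(\phi_{i}(X_{i}))$ from Proposition \ref{Prop.Implica.Fechado}, and the observation that finite semicomponibility leaves only finitely many nonempty terms in each $\phi_{j}^{-1}(A)$. Your version is shorter and sidesteps two points the paper needs: the interval structure of $\{i:x\in\phi_{i}(X_{i})\}$ (which itself requires a small argument the paper glosses over; you only need pointwise finiteness) and the choice of the auxiliary open sets $V_{n_{0}},V_{n_{1}}$. What the paper's construction buys in exchange is a more explicit, smaller neighborhood built directly from the system data, using only closedness of $Y_{n_{1}}$ and $f_{n_{0}-1}(Y_{n_{0}-1})$ rather than the full strength of Proposition \ref{Prop.Implica.Fechado}. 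Both arguments use the weak topology in an essential and parallel way: yours to show $A$ is closed, the paper's to show $X-V$ is closed.
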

\begin{proof}
The first part is trivial, since if the collection
$\{\phi_{i}(X_{i})\}_{i}$ is a locally finite cover of $X$, then it
is a pointwise finite cover of $X$.

Suppose that $\{X,\phi_{i}\}$ is a fundamental limit space for the
finitely semicomponible CIS $\SIF$. Let $x\in X$ be an arbitrary
point. Then, there are nonnegative integers $n_{0}\leq n_{1}$ such
that $\phi_{i}^{-1}(\{x\})\neq\emptyset\Leftrightarrow n_{0}\leq
i\leq n_{1}$. For each $n_{0}\leq i\leq n_{1}$, write $x_{i}$ to be
the single point of $X_{i}$ such that $x=\phi_{i}(x_{i})$. It
follows that $x_{i}\in Y_{n_{i}}$ for $n_{0}\leq i\leq n_{1}-1$, but
$x_{n_{1}}\notin Y_{n_{1}}$ and $x_{n_{0}}\notin
f_{n_{0}-1}(Y_{n_{0}-1})$.

Since $f_{n_{0}-1}(Y_{n_{0}-1})$ is closed in $X_{n_{0}}$ and
$x_{n_{0}}\notin f_{n_{0}-1}(Y_{n_{0}-1})$, we can choose an open
neighborhood $V_{n_{0}}$ of $x_{n_{0}}$ in $X_{n_{0}}$ such that
$V_{n_{0}}\cap f_{n_{0}-1}(Y_{n_{0}-1})=\emptyset$.

Similarly, since $x_{n_{1}}\notin Y_{n_{1}}$ and $Y_{n_{1}}$ is
closed in $X_{n_{1}}$, we can choose an open neighborhood
$V_{n_{1}}$ of $x_{n_{1}}$ in $X_{n_{1}}$ such that $V_{n_{1}}\cap
Y_{n+{1}}=\emptyset$.

Define
$V=\phi_{n_{0}}(V_{n_{0}})\cup\phi_{n_{0}+1}(X_{n_{0}+1})\cup\cdots\cup\phi_{n_{1}-1}(X_{n_{1}-1})\cup\phi_{n_{1}}(V_{n_{1}})$.

It is clear that $x\in V\subset X$ and $V\cap
\phi_{j}(X_{j})=\emptyset$ for all $j\notin\{n_{0},\ldots,n_{1}\}$.
Moreover, we have
$$\phi_{j}^{-1}(X-V)=
\left\{\begin{array}{ccl} X_{n_{0}}-V_{n_{0}} & \mbox{if} & j=n_{0} \\
X_{n_{1}}-V_{n_{1}} & \mbox{if} & j=n_{1} \\
\emptyset & \mbox{if} & n_{0}<j<n_{1} \\
X_{j} &  & \hspace{-7mm} \mbox{otherwise} \\
\end{array} \right..$$

In all cases, we see that $\phi_{j}^{-1}(X-V)$ is closed in $X_{j}$.
Thus, $X-V$ is closed in $X$. Therefore, we obtain an open
neighborhood $V$ of $x$ which intersects only a finite number of
$\phi_{i}(X_{i})'s$.
\end{proof}

The reciprocal of the previous proposition is not true, in general,
when $\{X,\phi_{i}\} $ is not a fundamental limit space. In fact, we
have the following example in which the above reciprocal failure.

\begin{Exmp}
Consider the topological subspaces $X_{0}=[1,2]$ and
$X_{n}=[\frac{1}{n+1},\frac{1}{n}]$, for $n\geq1$, of the real line
$\R$, and take $Y_{0}=\{1\}$ and $Y_{n}=\{\frac{1}{n+1}\}$ for
$n\geq1$. Define $f_{n}:Y_{n}\rightarrow X_{n+1}$ to be the natural
inclusion, for all $n\in\N$. It is clear that the CIS
$\{X_{n},Y_{n},f_{n}\}$ is finitely semicomponible, and its
fundamental limit space is, up to homeomorphism, the subspace
$X=(0,2]$ of the real line, together the collection of natural
inclusions $\phi_{n}:X_{n}\rightarrow X$. It is also obvious that
the collection $\{\phi_{i}(X_{i})\}_{i}$ is a locally finite cover
of $X$. On the other hand, take
$$Z=((0,1]\times\{0\})\cup\{(1+cos(\pi t-\pi),\sin(\pi t-\pi))\in\R^2
: t\in[1,2]\}.$$ Consider $Z$ as a subspace of the $\R^2$. Then $Z$
is homeomorphic to the sphere $S^1$. Consider the maps
$\psi_{0}:X_{0}\rightarrow Z$ given by $\psi_{0}(t)=(1+cos(\pi
t-\pi),\sin(\pi t-\pi))$, and $\psi_{n}:X_{n}\rightarrow Z$ given by
$\psi_{n}(t)=(t,0)$, for all $n\geq1$. It is easy to see that
$\{Z,\psi_{n}\}$ is a limit space for the CIS
$\{X_{n},Y_{n},f_{n}\}$. Now, note that the point $(0,0)\in Z$ has
no open neighborhood intercepting only a finite number of
$\psi_{n}(X_{n})'s$.

\end{Exmp}

\begin{thm}\label{Teorema.Cobert.Local.Finit.Fechada}
Let $\{X,\phi_{i}\}$ be a limit space for the CIS $\SIF$ and suppose
that the collection $\{\phi_{i}(X_{i})\}_{i}$ is a locally finite
closed cover of $X$. Then $\{X,\phi_{i}\}$ is a fundamental limit
space.
\end{thm}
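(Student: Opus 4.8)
The plan is to verify the nontrivial implication in the definition of the weak topology: assuming $\phi_{i}^{-1}(A)$ is closed in $X_{i}$ for every $i\in\N$, I must show that $A$ is closed in $X$. The reverse implication, that $A$ closed in $X$ forces each $\phi_{i}^{-1}(A)$ closed, is immediate from the continuity of every $\phi_{i}$, so I would dispose of it in one line.

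First I would localize $A$ along the cover. Because $\{\phi_{i}(X_{i})\}_{i}$ covers $X$, I have $A=\bigcup_{i\in\N}\bigl(A\cap\phi_{i}(X_{i})\bigr)$. The key observation is that each piece $A\cap\phi_{i}(X_{i})$ is closed in $X$. Indeed, $A\cap\phi_{i}(X_{i})=\phi_{i}(\phi_{i}^{-1}(A))$, and since $\phi_{i}$ is an imbedding it carries the closed set $\phi_{i}^{-1}(A)\subset X_{i}$ onto a closed subset of the subspace $\phi_{i}(X_{i})$; as $\phi_{i}(X_{i})$ is itself closed in $X$ by hypothesis, $A\cap\phi_{i}(X_{i})$ is closed in $X$.

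Next I would record that the family $\{A\cap\phi_{i}(X_{i})\}_{i}$ inherits local finiteness from $\{\phi_{i}(X_{i})\}_{i}$, since $A\cap\phi_{i}(X_{i})\subset\phi_{i}(X_{i})$ and any neighborhood meeting only finitely many $\phi_{i}(X_{i})$ meets only finitely many of the smaller sets. It then remains to invoke the standard point-set fact that the union of a locally finite family of closed sets is closed. For completeness I would supply its short proof: given $x$ in the closure of the union, choose an open $U\ni x$ meeting only finitely many members $C_{1},\dots,C_{n}$; were $x$ outside each closed $C_{k}$, one could intersect $U$ with neighborhoods avoiding each $C_{k}$ to obtain a neighborhood of $x$ disjoint from the whole union, contradicting that $x$ lies in the closure. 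Applying this to $C_{i}=A\cap\phi_{i}(X_{i})$ shows that $A=\bigcup_{i}C_{i}$ is closed, which finishes the verification of the weak topology.

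The main obstacle is bookkeeping rather than depth: one must deploy both hypotheses on the cover at the right moments — closedness of each $\phi_{i}(X_{i})$ to promote $A\cap\phi_{i}(X_{i})$ from closed-in-the-subspace to closed-in-$X$, and local finiteness to reassemble the countable union into a closed set. Neither the imbedding property alone nor closedness alone suffices, and this is exactly why the earlier limit space on $S^{1}$, whose cover is neither locally finite nor closed, fails to be fundamental; it is the locally finite closed cover hypothesis that upgrades an arbitrary limit space to a fundamental one.
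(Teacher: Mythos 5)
Your proposal is correct and follows essentially the same route as the paper's proof: the same decomposition $A=\bigcup_{i}\phi_{i}(\phi_{i}^{-1}(A))$, the same promotion of each piece to a closed subset of $X$ via the imbedding property plus closedness of $\phi_{i}(X_{i})$, and the same local-finiteness argument to conclude the union is closed (the paper merely inlines, for a point $x\in X-A$, the shrinking-neighborhood argument that you package as the standard lemma on locally finite closed families).
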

\begin{proof}
We need to prove that a subset $A$ of $X$ is closed in $X$ if and
only if $\phi_{i}^{-1}(A)$ is closed in $X_{i}$ for all $i\in N$.

If $A\subset X$ is closed in $X$, then it is clear that
$\phi_{i}^{-1}(A)$ is closed in $X_{i}$ for each $i\in\N$, since
each $\phi_{i}$ is a continuous map.

Now, let $A$ be a subset of $X$ such that $\phi_{i}^{-1}(A)$ is
closed in $X_{i}$, for all $i\in\N$. Then, since each $\phi_{i}$ is
a imbedding, it follows that
$\phi_{i}(\phi_{i}^{-1}(A))=A\cap\phi_{i}(X_{i})$ is closed in
$\phi_{i}(X_{i})$. But by hypothesis, $\phi_{i}(X_{i})$ is closed in
$X$. Therefore $A\cap\phi_{i}(X_{i})$ is closed in $X$, for each
$i\in\N$.

Let $x\in X-A$ be an arbitrary point and choose an open neighborhood
$V$ of $x$ in $X$ such that
$V\cap\phi_{i}(X_{i})\neq\emptyset\Leftrightarrow i\in \Lambda$,
where $\Lambda\subset\N$ is a finite subset of indices. It follows
that $$V\cap A=\bigcup_{i\in\Lambda}V\cap A\cap\phi_{i}(X_{i}).$$

Now, since each $A\cap\phi_{i}(X_{i})$ is closed in $X$ and $x\notin
A\cap\phi_{i}(X_{i})$, we can choose, for each $i\in\Lambda$, an
open neighborhood $V_{i}\subset V$ of $x$, such that $V_{i}\cap
A\cap \phi_{i}(X_{i})=\emptyset$. Take
$V'=\bigcap_{i\in\Lambda}V_{i}$. Then $V'$ is an open neighborhoodof
$x$ in $X$ and $V'\cap A=\emptyset$. Therefore, $A$ is closed in
$X$.
\end{proof}

\begin{cor}
Let $\{X,\phi_{i}\}$ be a limit space for the CIS $\SIF$ in which
each $X_{i}$ is a compact space. If $X$ is Hausdorff and
$\{\phi_{i}(X_{i})\}_{i}$ is a locally finite cover of $X$, then
$\{X,\phi_{i}\}$ is a fundamental limit space.
\end{cor}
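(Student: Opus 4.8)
The plan is to deduce this corollary from Theorem \ref{Teorema.Cobert.Local.Finit.Fechada} by showing that under the stated hypotheses the collection $\{\phi_{i}(X_{i})\}_{i}$ is automatically a locally finite \emph{closed} cover of $X$. Since local finiteness and the covering property are already assumed, the only missing ingredient is that each $\phi_{i}(X_{i})$ is closed in $X$. Once that is established, Theorem \ref{Teorema.Cobert.Local.Finit.Fechada} applies verbatim and yields that $\{X,\phi_{i}\}$ is a fundamental limit space.

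To prove closedness of each $\phi_{i}(X_{i})$, I would argue as follows. Each $X_{i}$ is compact by hypothesis, and $\phi_{i}$ is continuous (indeed an imbedding), so $\phi_{i}(X_{i})$ is a compact subset of $X$. The key point is that a compact subset of a Hausdorff space is closed. Since $X$ is assumed Hausdorff, it follows immediately that $\phi_{i}(X_{i})$ is closed in $X$ for every $i\in\N$. This is the heart of the argument and the step where both hypotheses (compactness of the $X_{i}$ and the Hausdorff property of $X$) are used together.

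Having shown that $\{\phi_{i}(X_{i})\}_{i}$ is a locally finite closed cover of $X$ (local finiteness and the cover condition being given directly, closedness just established), I would conclude by invoking Theorem \ref{Teorema.Cobert.Local.Finit.Fechada}, whose hypotheses are now fully met. That theorem gives precisely that $\{X,\phi_{i}\}$ is a fundamental limit space, completing the proof.

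I do not anticipate any serious obstacle here: the corollary is essentially a repackaging of the previous theorem, with the Hausdorff plus compactness hypotheses serving only to supply the closedness of the images $\phi_{i}(X_{i})$ that the theorem requires as an assumption. The one point worth stating carefully is the standard topological fact that compact subsets of Hausdorff spaces are closed, since that is the only nontrivial inference in the chain.
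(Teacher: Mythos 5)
Your proposal is correct and matches the paper's own proof essentially verbatim: the paper likewise observes that each $\phi_{i}(X_{i})$, being a compact subset of the Hausdorff space $X$, is closed, and then invokes Theorem \ref{Teorema.Cobert.Local.Finit.Fechada}. Nothing to add.
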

\begin{proof}
Each $\phi_{i}(X_{i})$ is a compact subset of the Hasdorff space
$X$. Therefore, each $\phi_{i}(X_{i})$ is closed in $X$. The result
follows from previous theorem.
\end{proof}

\begin{cor}
Let $\{X,\phi_{i}\}$ be a limit space for the finitely
semicomponible CIS $\SIF$. Then, $\{X,\phi_{i}\}$ is a fundamental
limit space if and only if the collection $\{\phi_{i}(X_{i})\}_{i}$
is a locally finite closed cover of $X$.
\end{cor}
\begin{proof}
Poposition \ref{Prop.Implica.Fechado} and Theorems
\ref{Teorema.Semicomponivel.Fundamental} and
\ref{Teorema.Cobert.Local.Finit.Fechada}.
\end{proof}

Let $f:Z\rightarrow W$ be a continuous map between topological
spaces. We say that $f$ is a {\bf perfect map} if it is closed,
surjective and, for each $w\in W$, the subset $f^{-1}(w)\subset Z$
is compact. (See \cite{Munkres}).

Let $\mathfrak{P}$ be a property of topological spaces. We say that
$\mathfrak{P}$ is a {\bf perfect property} if always that
$\mathfrak{P}$ is true for a space $Z$ and there is a perfect map
$f:Z\rightarrow W$, we have $\mathfrak{P}$ true for $W$. Again, we
say that a property $\mathfrak{P}$ is {\bf countable-perfect} if
$\mathfrak{P}$ is perfect and always that $\mathfrak{P}$ is true for
a countable collection of spaces $\{Z_{n}\}_{n}$, we have
$\mathfrak{P}$ true for the coproduct $\coprod_{n=0}^{\infty}Z_{n}$.
We say that $\mathfrak{P}$ is {\bf finite-perfect} if the previous
sentence is true for finite collections $\{Z_{n}\}_{n=0}^{n_{0}}$ of
topological spaces. It is obvious that every countable-perfect
property is also a finite-perfect property. The reciprocal is not
true. It is also obvious that every perfect property is a
topological invariant.

\begin{Exmp}
The follows one are examples of countable-prefect properties:
Hausdorff axiom, regularity, normality, local compactness, second
axiom of enumerability and Lindel\"of axiom. The compactness is a
finite-perfect property which is not countable-perfect. (For details
see {\rm \cite{Munkres}}).
\end{Exmp}

\begin{thm} \label{XtemP.para.FinitSem}
Let $\{X,\phi_{i}\}$ be a fundamental limit space for the finitely
semicomponible CIS $\SIF$, in which each $X_{i}$ has the
countable-perfect property $\mathfrak{P}$. Then $X$ has
$\mathfrak{P}$.
\end{thm}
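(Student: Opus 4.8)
The plan is to exhibit $X$ as the image of a perfect map out of the topological sum $\coprod_{i=0}^{\infty}X_i$, and then to invoke the two defining features of a countable-perfect property. By Theorem~\ref{Teo.Existencia} and the uniqueness of the fundamental limit space, I may assume that $X=\widetilde{X}=(\coprod_{i=0}^{\infty}X_i)/\!\sim$ and that each $\phi_i$ is the canonical map $\widetilde{\varphi}_i=\rho\circ\varphi_i$, where $\rho\colon\coprod_{i=0}^{\infty}X_i\to X$ is the quotient projection. Since each $X_i$ enjoys the countable-perfect property $\mathfrak{P}$ and $\mathfrak{P}$ passes to countable coproducts, the space $\coprod_{i=0}^{\infty}X_i$ has $\mathfrak{P}$. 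Because $\mathfrak{P}$ is in particular a perfect property, it then suffices to prove that $\rho$ is a perfect map, i.e.\ that it is continuous, surjective, closed, and has compact fibers.

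Continuity and surjectivity of $\rho$ are immediate from its definition as a quotient projection. For the fibers, fix $x\in X$. Each $\phi_i$ is an imbedding, hence injective, so $x$ has at most one preimage $x_i$ in each $X_i$; and since $\SIF$ is finitely semicomponible, $x$ belongs to only finitely many of the sets $\phi_i(X_i)$. Thus $\rho^{-1}(x)=\{\varphi_i(x_i):\phi_i(x_i)=x\}$ is a finite subset of $\coprod_{i=0}^{\infty}X_i$, and so it is compact.

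The central step is to show that $\rho$ is closed, and this is the only delicate point. Let $C\subseteq\coprod_{i=0}^{\infty}X_i$ be closed and put $C_i=C\cap\varphi_i(X_i)$, so that each $C_i$ is closed in $X_i$. Since $X$ carries the weak topology, $\rho(C)$ is closed in $X$ precisely when $\phi_j^{-1}(\rho(C))$ is closed in $X_j$ for every $j\in\N$, and one has $\phi_j^{-1}(\rho(C))=\bigcup_i\phi_j^{-1}(\phi_i(C_i))$. If $f_i$ and $f_j$ are not semicomponible then $\phi_i(X_i)\cap\phi_j(X_j)=\emptyset$ by condition~L.4, so that term vanishes; by finite semicomponibility only finitely many indices $i$ contribute. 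For a contributing $i<j$, conditions L.3 and L.5 identify $\phi_j^{-1}(\phi_i(C_i))$ with $f_{i,j-1}(C_i\cap Y_{i,j-1})$, which is closed in $X_j$ because $Y_{i,j-1}$ is closed in $X_i$ and $f_{i,j-1}$ is a closed injective map (by the same inductive argument, built from the closedness of each $f_k$, used in the proof of Proposition~\ref{Prop.Implica.Fechado}); the cases $i>j$ and $i=j$ are analogous, the latter yielding merely $C_j$. Hence $\phi_j^{-1}(\rho(C))$ is a finite union of closed sets, so it is closed, and therefore $\rho$ is closed. Combining these properties, $\rho$ is perfect; since $\mathfrak{P}$ holds for $\coprod_{i=0}^{\infty}X_i$ and $\mathfrak{P}$ is a perfect property, $X$ has $\mathfrak{P}$. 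Everything hinges on testing closedness coordinatewise through the weak topology and on using finite semicomponibility to collapse the union $\bigcup_i\phi_j^{-1}(\phi_i(C_i))$ to a finite, hence closed, one.
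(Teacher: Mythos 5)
Your proposal is correct and follows essentially the same route as the paper: identify $X$ with the quotient $\widetilde{X}=(\coprod X_i)/\!\sim$, use countable-perfectness to put $\mathfrak{P}$ on the coproduct, and show $\rho$ is perfect, with finite fibers coming from finite semicomponibility and closedness checked coordinatewise, where your decomposition $\phi_j^{-1}(\rho(C))=\bigcup_i\phi_j^{-1}(\phi_i(C_i))$ is exactly the paper's formula $\rho^{-1}(\rho(E))\cap X_j=(E\cap X_j)\cup\bigcup_{i<j}f_{i,j-1}(E\cap Y_{i,j-1})\cup\bigcup_{i>j}f_{j,i-1}^{-1}(E\cap X_i)$ rewritten via L.3--L.5. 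The only cosmetic difference is that you make explicit the step (left implicit in the paper) that $\coprod X_i$ inherits $\mathfrak{P}$ before transferring it along the perfect map.
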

\begin{proof}
Let $\{X,\phi_{i}\}$ be a fundamental limit space for $\SIF$. By the
Theorems \ref{EL==LD} and \ref{Unicidade}, there is a unique
homeomorphism $\beta:\widetilde{X}\rightarrow X$ such that
$\phi_{i}=\beta\circ\widetilde{\varphi}$, for all $i\in\N$. Then,
simply to prove that $\widetilde{X}$ has the property
$\mathfrak{P}$, where, remember, $\widetilde{X}=(\coprod
X_{i})/\sim$ is the quotient space constructed in Section
\ref{Section.Inductive.System} (Remember the Remark
\ref{Observacao.X.tilde}).

Consider the quotient map $\rho:\coprod X_{i}\rightarrow
\widetilde{X}$. It is continuous and surjective. Moreover, since the
CIS $\SIF$ is finitely semicomponible, it is obvious that for
$x\in\widetilde{X}$ we have that $\rho^{-1}(x)$ is a finite subset,
and so a compact subset, of $\coprod X_{i}$. Therefore, simply to
prove that $\rho$ is a closed map, since this is enough to conclude
that $\rho$ is a perfect map and, therefore, the truth of the
theorem.

Let $E\subset\coprod X_{i}$ be an arbitrary closed subset of
$\coprod X_{i}$. We need to prove that $\rho(E)$ is closed in
$\widetilde{X}$, that is, that $\rho^{-1}(\rho(E))\cap X_{i}$ is
closed in $X_{i}$ for each $i\in\N$. But note that
$$\rho^{-1}(\rho(E))\cap X_{i}=(E\cap
X_{i})\cup\bigcup_{j=0}^{i-1}f_{j,i-1}(E\cap Y_{j,i-1})
\cup\bigcup_{j=i}^{\infty}f_{i,j}^{-1}(E\cap X_{j+1}),$$  where each
term of the total union is closed. Now, since the given CIS is
finitely semicomponible, there is on the union
$\bigcup_{j=i}^{\infty}f_{i,j}^{-1}(E\cap X_{j+1})$ only a finite
nonempty terms. Thus, $\rho^{-1}(\rho(E))\cap X_{i}$ can be
rewritten as a finite union of closed subsets. Therefore
$\rho^{-1}(\rho(E))\cap X_{i}$ is closed.
\end{proof}

The quotient map $\rho:\coprod X_{i}\rightarrow \widetilde{X}$ is
not closed, in general. To illustrate this fact, we introduce the
follows example:

\begin{Exmp}
Consider the inductive CIS $\{S^{n},S^{n},f_{n}\}$ as in the Example
\ref{Exemplo.S.infinito}, starting at $n=1$. Consider the sequence
of real numbers $(a_{n})_{n}$, where $a_{n}=1/n$, $n\geq1$. Let
$A=\{a_{n}\}_{n\geq2}$ be the set of points of the sequence
$(a_{n})_{n}$ starting at $n=2$. Then, the image of $A$ by the map
$\gamma:[0,1]\rightarrow S^{1}$ given by $\gamma(t)=(\cos t,\sin t)$
is a sequence $(b_{n})_{n\geq2}$ in $S^{1}$ such that the point
$b=(1,0)\in S^{1}$ is not in $\gamma(A)$ and $(b_{n})_{n}$ converge
to $b$. It follows that the subset $B=\gamma(A)$ of $S^{1}$ is not
closed in $S^{1}$. Now, for each $n\geq2$, let $E^{n}$ be the closed
$(n-1)$-dimensional half-sphere imbedded as the meridian into $S^n$
going by point $f_{1,n-1}(b_{n})$. It is easy to see that $E^n$ is
closed in $S^n$ for each $n\geq2$. Let
$E=\bigsqcup_{n=2}^{\infty}E^{n}$ be the disjoint union of the
closed half-spheres $E_{n}$. Then, for each $n\geq2$, $E\cap
S^{n}=E^{n}$, and $E\cap S^1=\emptyset$. Thus, $E$ is a closed
subset of coproduct space $\coprod_{n=1}^{\infty} S^{n}$. However,
$\rho^{-1}(\rho(E))\cap S^{1}=B$ is not closed in $S^{1}$. Hence
$\rho(E)$ is not closed in the sphere $S^{\infty}$. Therefore, the
projection $\rho:\coprod S^{n}\rightarrow(\coprod S^{n})/\sim\,\cong
S^{\infty}$ is not a closed map.
\end{Exmp}

Now, we demonstrate the result of the previous theorem in the case
of stationary CIS's. In this case the result is stronger, and
applies to properties finitely perfect. We started with the
following preliminary result, whose proof is obvious and therefore
will be omitted (left to the reader).

\begin{lem}
Let $\{X,\phi_{i}\}$ be a fundamental limit space for the stationary
CIS $\SIF$. Suppose that this CIS park in the index $n_{0}\in\N$.
Then $\phi_{i}=\phi_{n_{0}}$, for all $i\geq n_{0}$, and
$X\cong\cup_{i=0}^{n_{0}}\phi_{i}(X_{i})$. Moreover, the composition
$$ \xymatrix{\rho_{n_{0}}: \coprod_{i=0}^{n_{0}}X_{i} \ar[r]^{\ \
inc.} & \coprod_{i=0}^{\infty} X_{i} \ar[r]^{ \ \ \ \rho} &
\widetilde{X}}$$ is a continuous surjection, where inc. indicates
the natural inclusion.
\end{lem}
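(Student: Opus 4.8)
The plan is to prove three separate assertions about a stationary CIS that parks at index $n_0$, so I would organize the proof around them in sequence. First I would establish that $\phi_i = \phi_{n_0}$ for all $i \geq n_0$. Since the system is stationary at $n_0$, we have $X_i = X_{n_0}$, $Y_i = X_i$, and $f_i = \mathrm{id}$ for all $i \geq n_0$. The key observation is that for $i \geq n_0$ the composite $f_{n_0,i-1}$ is the identity map on $X_{n_0}$, so $Y_{n_0,i-1} = X_{n_0}$ and $f_i$, $f_{n_0}$ are semicomponible. Applying Lemma \ref{Lema.Inicial.3} (with the roles of the indices chosen appropriately) gives $\phi_i f_{n_0,i-1}(x) = \phi_{n_0}(x)$ for all $x \in X_{n_0}$, and since $f_{n_0,i-1}$ is the identity this reads $\phi_i = \phi_{n_0}$.

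Next I would deduce $X \cong \bigcup_{i=0}^{n_0}\phi_i(X_i)$. By axiom L.1 we have $X = \bigcup_{i=0}^{\infty}\phi_i(X_i)$, but the previous step shows $\phi_i(X_i) = \phi_{n_0}(X_{n_0})$ for every $i \geq n_0$, so all the tail terms collapse onto a single set already present in the union. Hence $X = \bigcup_{i=0}^{n_0}\phi_i(X_i)$ as an honest set equality, not merely a homeomorphism, which is even stronger than claimed.

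Finally I would verify that $\rho_{n_0} = \rho \circ (\mathrm{inc.})$ is a continuous surjection. Continuity is immediate: the inclusion of the finite subcoproduct into $\coprod_{i=0}^\infty X_i$ is continuous (it is the canonical inclusion of a summand), and $\rho$ is the quotient projection, hence continuous, so their composite is continuous. For surjectivity I would invoke Remark \ref{Observacao.X.tilde}, which identifies $\widetilde{X}$ with the attaching space and tells us $\widetilde{X} = \bigcup_{i=0}^{\infty}\widetilde{\varphi}_i(X_i)$; combined with the fact that $\widetilde{\varphi}_i = \widetilde{\varphi}_{n_0}$ for $i \geq n_0$ (proved exactly as in the first step, since $\{\widetilde{X},\widetilde{\varphi}_i\}$ is itself a fundamental limit space), every point of $\widetilde{X}$ lies in $\widetilde{\varphi}_i(X_i)$ for some $i \leq n_0$, and such a point is clearly in the image of $\rho_{n_0}$.

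The routine parts here are the continuity and surjectivity of $\rho_{n_0}$, which are essentially formal. The only step requiring genuine care is the first one: correctly unwinding the definitions of $f_{n_0,i-1}$ and $Y_{n_0,i-1}$ in the stationary regime to confirm that the relevant composites really are identities and that the semicomponibility hypothesis of Lemma \ref{Lema.Inicial.3} is met, so that the lemma applies and forces $\phi_i = \phi_{n_0}$. Since the author explicitly states the proof is obvious and omits it, I expect no serious obstacle — the entire argument is bookkeeping over the stabilized indices, with the main subtlety being purely notational.
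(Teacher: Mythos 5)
Your proof is correct, and in fact the paper offers nothing to compare it against: the author explicitly omits the proof of this lemma as ``obvious'' and leaves it to the reader. Your argument supplies exactly the intended bookkeeping --- in the stationary regime $f_{n_0,i-1}=\mathrm{id}$ and $Y_{n_0,i-1}=X_{n_0}$, so Lemma \ref{Lema.Inicial.3} forces $\phi_i=\phi_{n_0}$ for $i\geq n_0$; condition L.1 then collapses the union to the honest equality $X=\bigcup_{i=0}^{n_0}\phi_i(X_i)$; and continuity plus surjectivity of $\rho_{n_0}$ follow formally, with surjectivity correctly routed through Remark \ref{Observacao.X.tilde} and the fact that $\{\widetilde{X},\widetilde{\varphi}_i\}$ is itself a fundamental limit space.
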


\begin{thm}
Let $\{X,\phi_{i}\}$ be a fundamental limit spaces for the
stationary CIS $\SIF$, in which each $X_{i}$ has the finite-perfect
property $\mathfrak{P}$. Then $X$ has $\mathfrak{P}$.
\end{thm}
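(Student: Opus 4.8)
The plan is to mirror the proof of the finitely-semicomponible case (Theorem \ref{XtemP.para.FinitSem}), but exploit the extra structure of a stationary CIS to reduce everything to a \emph{finite} coproduct. By the preceding lemma, the system parks at some index $n_{0}$, so $\widetilde{X}\cong\bigcup_{i=0}^{n_{0}}\phi_{i}(X_{i})$ and the composition $\rho_{n_{0}}:\coprod_{i=0}^{n_{0}}X_{i}\rightarrow\widetilde{X}$ is a continuous surjection. Since $\mathfrak{P}$ is finite-perfect, the finite coproduct $\coprod_{i=0}^{n_{0}}X_{i}$ has the property $\mathfrak{P}$, because each $X_{i}$ does and finite-perfectness passes $\mathfrak{P}$ to finite coproducts. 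Thus the whole burden of the proof is to show that $\rho_{n_{0}}$ is a \emph{perfect} map; once that is done, perfectness of $\mathfrak{P}$ immediately yields $\mathfrak{P}$ for $\widetilde{X}$, and by the uniqueness of the fundamental limit space (Theorem \ref{Unicidade}) the homeomorphic space $X$ inherits $\mathfrak{P}$ as well, since every perfect property is a topological invariant.

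First I would record that $\rho_{n_{0}}$ is continuous and surjective, which is exactly the content of the cited lemma. Next I would check that the point-preimages are compact: for $x\in\widetilde{X}$, the fiber $\rho_{n_{0}}^{-1}(x)$ is a finite set (each point of $\widetilde{X}$ arises from at most the finitely many indices $0,\ldots,n_{0}$, and along the stationary tail the identifications collapse everything to a single representative among these indices), hence compact. The remaining and genuinely substantive step is to prove that $\rho_{n_{0}}$ is a \emph{closed} map. Here I would imitate the computation in Theorem \ref{XtemP.para.FinitSem}: taking a closed set $E\subset\coprod_{i=0}^{n_{0}}X_{i}$, I would show that $\rho_{n_{0}}(E)$ is closed in $\widetilde{X}$ by verifying that $\widetilde{\varphi}_{i}^{-1}(\rho_{n_{0}}(E))$ is closed in $X_{i}$ for every $i$, using the weak topology of $\widetilde{X}$. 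The preimage decomposes as a union of $E\cap X_{i}$ together with finitely many forward images $f_{j,i-1}(E\cap Y_{j,i-1})$ and finitely many preimages $f_{i,j}^{-1}(E\cap X_{j+1})$; because we now start from the finite coproduct, every such union is automatically finite, and each term is closed since the $f_{i}$ are closed continuous maps. A finite union of closed sets is closed, so $\rho_{n_{0}}$ is closed.

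The main obstacle I anticipate is the closedness verification, specifically handling the indices past the parking point $n_{0}$ correctly. In the stationary regime $f_{n}=\mathrm{id}$ and $X_{n}=X_{n_{0}}$ for $n\geq n_{0}$, so although the abstract coproduct $\coprod_{i=0}^{\infty}X_{i}$ has infinitely many summands, the map $\rho_{n_{0}}$ restricts attention to the first $n_{0}+1$ of them; I must argue carefully that a closed set in the finite coproduct, when pushed through $\rho_{n_{0}}$, has closed preimage in \emph{every} $X_{i}$ including those with $i>n_{0}$, using that for such $i$ the map $\widetilde{\varphi}_{i}$ factors through $\widetilde{\varphi}_{n_{0}}$ via the stationary identity identifications. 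Once the finiteness of all relevant unions is secured — which is guaranteed precisely because we work over the finite index range $0\le i\le n_{0}$ rather than the infinite coproduct — the argument closes, and the strengthening from countable-perfect to finite-perfect is exactly what this finite reduction buys us.
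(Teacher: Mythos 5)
Your proposal is correct and follows essentially the same route as the paper's own proof: reduce to $\widetilde{X}$, invoke the lemma giving the continuous surjection $\rho_{n_{0}}$ from the finite coproduct, observe the fibers are finite hence compact, and prove closedness via the weak topology and the same decomposition of $\rho^{-1}(\rho_{n_{0}}(E))\cap X_{i}$, where stationarity forces $E\cap X_{j+1}=\emptyset$ for $j\geq n_{0}$ so the union is finite. Your explicit remarks that finite-perfectness gives $\mathfrak{P}$ for $\coprod_{i=0}^{n_{0}}X_{i}$ and that topological invariance transfers $\mathfrak{P}$ from $\widetilde{X}$ to $X$ only make explicit what the paper leaves implicit.
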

\begin{proof}
As in the Theorem \ref{XtemP.para.FinitSem}, simply to prove that
$\widetilde{X}=(\coprod X_{i})/\sim$ has $\mathfrak{P}$.

Suppose that the CIS $\SIF$ parks in the index $n_{0}\in\N$. By the
previous lemma, the map
$\rho_{n_{0}}:\coprod_{i=0}^{n_{0}}X_{i}\rightarrow\widetilde{X}$ is
continuous and surjective. Thus, simply to prove that $\rho_{n_{0}}$
is a perfect map. In order to prove this, it rests only to prove
that $\rho_{n_{0}}$ is a closed map and $\rho_{n_{0}}^{-1}(x)$ is a
compact subset of $\coprod_{i=0}^{n_{0}}X_{i}$, for each
$x\in\widetilde{X}$. This latter fact is trivial, since each subset
$\rho_{n_{0}}^{-1}(x)$ is finite.

In order to prove that $\rho_{n_{0}}$ is a closed map, let $E$ be an
arbitrary closed subset of $\coprod_{i=0}^{n_{0}}X_{i}$. We need to
prove that $\rho^{-1}(\rho_{n_{0}}(E))\cap X_{i}$ is closed in
$X_{i}$ for each $i\in\N$. But note that, as before,
$$\rho^{-1}(\rho_{n_{0}}(E))\cap X_{i}=(E\cap
X_{i})\cup\bigcup_{j=0}^{i-1}f_{j,i-1}(E\cap Y_{j,i-1})
\cup\bigcup_{j=i}^{\infty}f_{i,j}^{-1}(E\cap X_{j+1}),$$  where each
term of this union is closed. Now, since
$E\subset\coprod_{i=0}^{n_{0}}X_{i}$, we have $E\cap
X_{j+1}=\emptyset$ for all $j\geq n_{0}$. Thus, the subsets
$f_{i,j}^{-1}(E\cap X_{j+1})$ which are in the last part of the
union are empty for all $j\geq n_{0}$. Hence,
$\rho^{-1}(\rho_{n_{0}}(E))\cap X_{i}$ is a finite union of closed
subsets. Therefore, $\rho^{-1}(\rho_{n_{0}}(E))\cap X_{i}$ is
closed.
\end{proof}




\end{document}